\theoremstyle{definition}
\newtheorem*{ack}{Acknowledgments}
\newtheorem{example}{Example}[section]
\newtheorem{note}[example]{Note}
\newtheorem{question}[example]{Question}
\newtheorem{conjecture}[example]{Conjecture}
\newtheorem{theorem}[example]{Theorem}
\newtheorem{lemma}[example]{Lemma}
\newtheorem{proposition}[example]{Proposition}
\newtheorem{corollary}[example]{Corollary}
\normalfont\fontsize{14}{15}\bfseries}{\thesection}{1em}{}
\normalfont\fontsize{11}{15}\bfseries}{\thesubsection}{1em}{}
\title{\Large Norm, trace, and formal codegrees \\ of fusion categories}
\author{\normalsize Andrew Schopieray\thanks{This material is based upon work supported by the National Science Foundation under Grant No. DMS-1440140, while the author was in residence at the Mathematical Sciences Research Institute in Berkeley, California, during the Spring 2020 semester.}}
\date{\today}
\begin{document}

\maketitle

\begin{abstract}
We prove several results in the theory of fusion categories using the product (norm) and sum (trace) of Galois conjugates of formal codegrees.  First, we prove that finitely-many fusion categories exist up to equivalence whose global dimension has a fixed norm.  Furthermore, with two exceptions, all formal codegrees of spherical fusion categories with square-free norm are rational integers.  This implies, with three exceptions, that every spherical braided fusion category whose global dimension has prime norm is pointed.  The reason exceptions occur is related to the classical Schur-Siegel-Smyth problem of describing totally positive algebraic integers of small absolute trace. 
\end{abstract}

\section{Introduction}

Fusion categories and their many variants (tensor, spherical, braided, modular, etc.) are a vast generalization of the representation theory of finite groups and finite-dimensional Hopf algebras.  The formal codegrees of a fusion category $\mathcal{C}$, a finite collection of numerical invariants associated to representations of the underlying Grothendieck ring defined in Section \ref{formalcodegrees}, have proven to be critical to this study and include the Frobenius-Perron dimension ($\mathrm{FPdim}(\mathcal{C})$) and global dimension ($\dim(\mathcal{C})$) for spherical fusion categories, as examples.  Formal codegrees are restrictive from a number-theoretic perspective because they are examples of totally positive cyclotomic integers, and the less-familiar algebraic $d$-numbers \cite[Definition 1.1]{codegrees}.  Frobenius-Schur indicators of semisimple quasi-Hopf algebras \cite{linchenko2000frobenius} and spherical fusion categories \cite{NG200734}, and higher Gauss sums of modular tensor categories \cite{schopieraywang} are other examples of algebraic $d$-numbers.  The main goal of this paper is to expand the general theory of formal codegrees of fusion categories which is mainly contained in \cite{codegrees,ost15,ostrikremarks} thus far.

\par For a formal codegree $f\in\mathbb{C}$ of a fusion category, let $N(f)$ be the norm of $f$, or the product of its Galois conjugates, and $\mathrm{Tr}(f)$ be the trace of $f$, or their sum.  Theorem \ref{normfiniteness} states that for each $m\in\mathbb{Z}_{\geq1}$ there exist finitely-many fusion categories $\mathcal{C}$ up to equivalence with $N(\dim(\mathcal{C}))=m$.  This \emph{norm finiteness} is also true for $\mathrm{FPdim}(\mathcal{C})$ but because the set of all Frobenius-Perron dimensions of fusion categories is a discrete subset of the positive real numbers \cite[Corollary 3.13]{paul}.  In Section \ref{divisors} we observe a unique feature of algebraic $d$-numbers: divisibility of algebraic $d$-numbers is equivalent to divisibility of their norms.  As a result (Theorem \ref{newprop}), if $f$ is a formal codegree of a fusion category $\mathcal{C}$ which  is not divisible by any rational integer, then the formal codegrees of $\mathcal{C}$ are precisely the Galois orbit of $f$.  There is only one family of this type known: the categories $\mathcal{C}(\mathfrak{sl}_2,\kappa-2)_\mathrm{ad}$ for prime $\kappa\in\mathbb{Z}_{\geq3}$ coming from the representation theory of $\mathcal{U}_q(\mathfrak{sl}_2)$ with $q$ a root of unity \cite{MR4079742}. The last result of this section, Theorem \ref{prop6}, pertains to spherical fusion categories.  Sphericality is a very weak assumption as all known examples of fusion categories possess a spherical structure.   We prove that if $\mathcal{C}$ is a spherical fusion category with a formal codegree $f$ of square-free norm, then $f\in\mathbb{Z}$ or $f=(1/2)(5\pm\sqrt{5})$.  A spherical fusion category posessing either $(1/2)(5\pm\sqrt{5})$ as a formal codegree is equivalent to one of the rank 2 spherical fusion categories $\mathrm{Fib}$ or its Galois conjugate $\mathrm{Fib}^\sigma$ \cite{ostrik}.

\par We then apply our general theory to classification results for spherical fusion categories in Section \ref{globp}.  Recently in \cite[Question 3.8]{you} it was asked whether a spherical braided fusion category of prime global dimension is pointed (all simple objects have Frobenius-Perron dimension $1$) or equivalent to $\mathrm{Fib}\boxtimes\mathrm{Fib}^\sigma$.  Theorem \ref{biggun} answers this question in the affirmative and generalizes the result to include arbitrary number fields.  That is to say any spherical braided fusion category whose global dimension has prime \emph{norm} is pointed, with the exception of $\mathrm{Fib}$, $\mathrm{Fib}^\sigma$ and $\mathrm{Fib}\boxtimes\mathrm{Fib}^\sigma$.  The reason these exceptional (not pointed) spherical braided fusion categories can occur is that the categorical dimensions of their simple objects are exceptional cases of a classical result of Cassels \cite[Lemma 3]{cassels} on cyclotomic integers $\alpha$ such that the absolute trace (average of Galois conjugates) of $|\alpha|^2$ is less than 2.  In Theorem \ref{previous}, we remove the assumption of a braiding for spherical fusion categories with global dimension whose norm is a \emph{safe prime}.  Safe primes $p\in\mathbb{Z}_{\geq2}$ are of the form $p=2q+1$ where $q\in\mathbb{Z}_{\geq2}$ is also prime.  It is expected, but currently not proven, that infinitely many safe primes exist.  The reason the exceptional (not pointed) spherical fusion categories can occur in this case is that all of their formal codegrees are of the form $p\cdot u$ where $u$ is one of the three totally positive algebraic integers of smallest absolute trace: $1$ or $(1/2)(3\pm\sqrt{5})$.  For any $\lambda<2$, the proof of finiteness, and classification of totally positive algebraic integers of absolute trace strictly less than $\lambda$ is known as the Schur-Siegel-Smyth trace problem.  We encourage the reader to refer to \cite{MR2428512} for an expository look at the long history of this problem which continues to this day.  The first result in this direction was due to Schur \cite[Satz VIII]{schur1918verteilung} who solved the finiteness problem for $\lambda=\sqrt{e}$ and the classification was subsequently solved by Siegel \cite[Theorem III]{siegel} for $\lambda=3/2$, which is sufficient for our purposes.  Lastly, we include a proof of Theorem \ref{previous} in the case $p=13$ (Example \ref{thurteen}) as a proof-of-concept that the methods we have developed for spherical fusion categories can be pushed outside of the safe primes, but certainly more tools will be needed to complete this classification for arbitrary fusion categories.

\par This is not the first time the Schur-Siegel-Smyth trace problem, nor the results on absolute trace of Cassels have appeared in the literature in relation to fusion categories.  Gelaki, Naidu, and Nikshych used Siegel's initial trace bound to study the number of zeroes in the $S$-matrix of a weakly integral modular tensor category \cite[Proposition 6.2]{MR2587410}.  Their result can be viewed as an extension of similar results for zeroes of characters of finite groups (refer to \cite{MR2599088} and references within).  Later, Calegari, Morrison, and Snyder \cite{MR2786219} improved upon Cassels result \cite[Lemma 3]{cassels} as a means to classify the smallest possible Frobenius-Perron dimensions of objects in fusion categories; Calegari and Guo would make further improvements in \cite{MR3814339}.  The results we prove here on formal codegrees of fusion categories continue this tradition, and further illuminate the necessity for algebraic number theory in the study of fusion categories.

\begin{figure}[H]
\centering
\begin{equation*}
\begin{array}{|c|c|c|c|}
\hline \text{Notation} & \text{Meaning} & \text{Notation} & \text{Meaning} \\\hline\hline
\alpha &  \text{algebraic integer} & \zeta_n &  \exp(2\pi i/n)  \\\hline
\mathbb{K},\mathbb{L},\ldots &  \text{algebraic number fields} &\mathbb{Q}(\zeta_n)^+ &  \mathbb{Q}(\zeta_n+\zeta_n^{-1})\\\hline
\mathbb{K}^\times,\mathbb{L}^\times,\ldots & \text{unit groups} & [n]\in\mathbb{Q}(\zeta_m) & \sin(n\pi/m)/\sin(\pi/m) \\\hline
[\mathbb{L}:\mathbb{K}] &  \dim_\mathbb{K}(\mathbb{L})  &  N(\alpha) & \prod_{\sigma\in\mathrm{Gal}(\mathbb{Q}(\alpha)/\mathbb{Q})}\sigma(\alpha) \\\hline
d_\alpha & [\mathbb{Q}(\alpha):\mathbb{Q}] & \mathrm{Tr}(\alpha) & \sum_{\sigma\in\mathrm{Gal}(\mathbb{Q}(\alpha)/\mathbb{Q})}\sigma(\alpha)  \\\hline
\end{array}
\end{equation*}
    \caption{Recurring notation}%
    \label{fig:X}%
\end{figure}


\section{Preliminaries}

This exposition is as self-contained as one could hope; we include the following preliminary sections with references for readers less familiar with either elementary number theory or fusion categories.  We also include a section about our main technical tool, formal codegrees, where the interplay of number theory and fusion categories is well illustrated.


\subsection{Number fields, norm, trace, and $d$-numbers}

What is needed here from algebraic number theory can be found in any undergraduate textbook on the subject (e.g. \!\cite{MR2031707}).  The following notation and language will be fixed throughout.  The field of complex numbers, $\mathbb{C}$, contains all other fields considered.  Let $\mathbb{Q}\subset\mathbb{C}$ be the field of rational numbers with algebraic closure $\overline{\mathbb{Q}}$.  By a \emph{number field} we will mean any extension $\mathbb{K}/\mathbb{Q}$ with $[\mathbb{K}:\mathbb{Q}]:=\dim_\mathbb{Q}(\mathbb{K})<\infty$.  An \emph{algebraic integer} is any $\alpha\in\mathbb{C}$ which is a root of a monic polynomial with coefficients in $\mathbb{Z}$ (the rational integers).  For brevity, if $\alpha$ is an algebraic integer, we set $d_\alpha:=[\mathbb{Q}(\alpha):\mathbb{Q}]$.  The set of all algebraic integers has the structure of a unital ring, $\mathbb{A}$, and the group of invertible elements, or \emph{algebraic units}, will be denoted $\mathbb{A}^\times$.  More generally, if $\mathbb{K}$ is any number field, then the ring of algebraic integers and the group of algebraic units contained in $\mathbb{K}$ will be denoted $\mathcal{O}_\mathbb{K}$ and $\mathcal{O}_\mathbb{K}^\times$, respectively.  Let $\alpha,\beta\in\mathbb{A}$, $\alpha\neq0$.  Then $\alpha$ \emph{divides} $\beta$ if and only if there exists $\gamma\in\mathbb{A}$ such that $\alpha\gamma=\beta$. 

\vspace{2 mm}

All number fields of interest to us will be \emph{cyclotomic} in light of \cite[Theorem 8.51]{ENO}, which is to say contained in $\mathbb{Q}(\zeta_n)$ where $\zeta_n:=\exp(2\pi i/n)$ for some $n\in\mathbb{Z}_{\geq2}$.  We refer the reader to \cite{washington} for further resources about cyclotomic number fields.  One benefit of operating in cyclotomic number fields is that they are all Galois extensions of $\mathbb{Q}$, i.e. \!number fields $\mathbb{K}$ with $\mathbb{K}/\mathbb{Q}$ a normal extension.  In particular $\mathrm{Gal}(\mathbb{Q}(\zeta_n)/\mathbb{Q}))\cong(\mathbb{Z}/n\mathbb{Z})^\times$.  The maximal totally real subfield of $\mathbb{Q}(\zeta_n)$ is $\mathbb{Q}(\zeta_n)^+:=\mathbb{Q}(\zeta_n+\zeta_n^{-1})$ with $[\mathbb{Q}(\zeta_n):\mathbb{Q}(\zeta_n)^+]=2$ for $n\neq2$.  Algebraic units of interest to us will be \emph{cyclotomic units}: those products of a root of $1$ and units of the form $(\zeta^n-1)/(\zeta-1)$ where $n\in\mathbb{Z}_{\geq1}$ and $\zeta$ is any root of $1$.  For example, let $p$ be an odd prime.  Lemma 8.1 of \cite{washington} describes multiplicative generators of the cyclotomic units in terms of the real units $[a]:=\sin(a\pi/p)/\sin(\pi/p)$ and $\zeta_p$.  In particular, the cyclotomic units of $\mathbb{Q}(\zeta_p)^+$ are multiplicatively generated by $\pm1$ and $A:=\{[a]:2\leq a\leq(p-1)/2\}$.

\vspace{2 mm}

We will often refer to the product and sum of all Galois conjugates of algebraic integers.  If $\mathbb{K}$ is a cyclotomic number field (more generally, if $\mathbb{K}/\mathbb{Q}$ is Galois) we may safely define the \emph{norm} function $N_\mathbb{K}:\mathbb{K}\to\mathbb{Q}$ by
\begin{equation}
N_\mathbb{K}(\alpha):=\prod_{\tau\in\mathrm{Gal}(\mathbb{K}/\mathbb{Q})}\tau(\alpha).
\end{equation}
When there is no risk of ambiguity, as will almost always be the case, we will reduce this notation to $N(\alpha):=N_{\mathbb{Q}(\alpha)}(\alpha)$.  The norm function is multiplicative and when $\alpha\in\mathcal{O}_\mathbb{K}$, $N_\mathbb{K}(\alpha)\in\mathbb{Z}$.  Therefore if $\mathbb{Q}(\alpha)\subset\mathbb{K}$, $N_\mathbb{K}(\alpha)=N(\alpha)^{[\mathbb{K}:\mathbb{Q}(\alpha)]}$.  Algebraic units are those $u\in\mathcal{O}_\mathbb{K}$ with $N_\mathbb{K}(u)=\pm1$.  Likewise, we define the \emph{trace} function $\mathrm{Tr}_\mathbb{K}:\mathbb{K}\to\mathbb{Q}$ by
\begin{equation}
\mathrm{Tr}_\mathbb{K}(\alpha):=\sum_{\tau\in\mathrm{Gal}(\mathbb{K}/\mathbb{Q})}\tau(\alpha)
\end{equation}
and $\mathrm{Tr}(\alpha):=\mathrm{Tr}_{\mathbb{Q}(\alpha)}(\alpha)$.  The trace function is additive and when $\alpha\in\mathcal{O}_\mathbb{K}$, $\mathrm{Tr}_\mathbb{K}(\alpha)\in\mathbb{Z}$.  Therefore if $\mathbb{Q}(\alpha)\subset\mathbb{K}$, then $\mathrm{Tr}_\mathbb{K}(\alpha)=[\mathbb{K}:\mathbb{Q}(\alpha)]\mathrm{Tr}(\alpha)$.  If $x^n+a_1x^{n-1}+\cdots+a_{n-1}x+a_n$ is the minimal polynomial of a cyclotomic integer $\alpha$, then $\mathrm{Tr}(\alpha)=-a_1$ and $N(\alpha)=(-1)^na_n$.

\vspace{2 mm}

Lastly, a very specific type of algebraic integer appears abundantly in the study of fusion categories, introduced by Ostrik in \cite{codegrees}.  An algebraic \emph{$d$-number} is a nonzero $\alpha\in\mathbb{A}$ such that $\sigma(\alpha)/\alpha\in\mathbb{A}$ for all $\sigma$ in the absolute Galois group $\mathrm{Gal}(\overline{\mathbb{Q}}/\mathbb{Q})$.  Lemma 2.7 of \cite{codegrees} gives many alternative characterizations of a $d$-number.  For our purposes the most important characterization is that a nonzero $\alpha\in\mathbb{A}$ is a $d$-number if and only if the minimal polynomial of $\alpha$ over $\mathbb{Q}$ (which lies in $\mathbb{Z}[x]$), $x^n+a_1x^{n-1}+\cdots+a_{n-1}x+a_n$, enjoys the property that $(a_n)^j$ divides $(a_j)^n$ for all $1\leq j\leq n$.  If $\alpha$ is a cyclotomic $d$-number, then by the proof of \cite[Lemma 2.7]{codegrees},
\begin{equation}\label{ddd}
\alpha^{d_\alpha}=N(\alpha)\cdot u=(-1)^na_n\cdot u
\end{equation}
for some $u\in\mathcal{O}_{\mathbb{Q}(\alpha)}^\times$.  This fact is crucial for future arguments.


\subsection{Fusion categories}

\par In what follows we will consider fusion categories over $\mathbb{C}$ in the sense of \cite{tcat,ENO}.  These are semisimple tensor categories \cite[Definition 4.1.1]{tcat} over $\mathbb{C}$ whose set of isomorphism classes of simple objects, $\mathcal{O}(\mathcal{C})$, is finite.  One important property of tensor categories is the existence of \emph{duality} of objects: a permutation of $\mathcal{O}(\mathcal{C})$ which we denote by $X\mapsto X^\ast$ (without the assumption of semisimplicity, one would need to consider \emph{left} and \emph{right} duals separately \cite[Proposition 4.8.1]{tcat}).  The decomposition of $\otimes$-products into elements of $\mathcal{O}(\mathcal{C})$ (afforded by semisimplicity) are known as the \emph{fusion rules} of $\mathcal{C}$.

\begin{example}
The canonical examples of fusion categories are $\mathrm{Rep}(G)$ and $\mathrm{Vec}_G$, the categories of finite-dimensional complex representations of a finite group $G$, and finite dimensional complex $G$-graded vector spaces \cite[Examples 2.3.4--2.3.6]{tcat}.  When $G$ is the trivial group we recover the trivial fusion category $\mathrm{Vec}$, the unique fusion category $\mathcal{C}$ (up to equivalence) with $|\mathcal{O}(\mathcal{C})|=1$.
\end{example}

The fusion rules of a fusion category $\mathcal{C}$ are a property of the underlying Grothendieck ring, $K(\mathcal{C})$, hence a single Grothendieck ring can have many categorifications, corresponding to various associativity isomorphisms for these products.  For example the group ring $\mathbb{Z}[G]$ is categorified by $\mathrm{Vec}_G$, but the associativity may also be twisted by a $3$-cocycle $\omega$ to produce potentially inequivalent categories $\mathrm{Vec}_G^\omega$ \cite[Example 2.3.8]{tcat}.  One can consider these associativity isomorphisms as solutions to large sets of algebraic equations gotten from the relations constraining associativity \cite[Definition 2.1.1]{tcat}.   It is well-known that any solution to this system of constraining equations corresponds to a categorification of the underlying Grothendieck ring (see \cite{BaKi}), therefore for each fusion category $\mathcal{C}$ and Galois automorphism $\sigma\in\mathrm{Gal}(\overline{\mathbb{Q}}/\mathbb{Q})$ one can construct a Galois conjugate fusion category $\mathcal{C}^\sigma$ by applying $\sigma$ to all structure constants of $\mathcal{C}$.  \emph{A priori} $\mathcal{C}^\sigma$ could be equivalent to $\mathcal{C}$, but many new examples of fusion categories can be created by this Galois conjugation. 

\subsubsection{Dimensions}\label{dimsec}

There are many methods for differentiating between fusion categories via numerical invariants.  For each $X\in\mathcal{O}(\mathcal{C})$ we define $\mathrm{FPdim}(X)$ to be the maximal real eigenvalue of the nonnegative integer matrix of $\otimes$-ing with $X$ afforded by the Frobenius-Perron Theorem \cite{frobenius,perron}. These dimensions collect to a ring homomorphism $\mathrm{FPdim}:K(\mathcal{C})\to\mathbb{C}$ \cite[Proposition 3.3.6 (1)]{tcat}.  Then the \emph{Frobenius-Perron dimension of }$\mathcal{C}$
\begin{equation}
\mathrm{FPdim}(\mathcal{C}):=\sum_{X\in\mathcal{O}(\mathcal{C})}\mathrm{FPdim}(X)^2
\end{equation}
is one measurement of the size of a fusion category.  Assumptions about Frobenius-Perron dimension are very limiting as this is a numerical invariant not only of the fusion category but of its underlying Grothendieck ring.  More flexible numerical invariants are the \emph{squared norms} $|X|^2$ for $X\in\mathcal{O}(\mathcal{C})$ introduced by M\"uger in \cite{mug2} where one can find further details (we will have no reason to refer to these in further sections).  As with Frobenius-Perron dimension, we then define the \emph{global dimension of }$\mathcal{C}$:
\begin{equation}\label{globaldimension}
\mathrm{dim}(\mathcal{C}):=\sum_{X\in\mathcal{O}(\mathcal{C})}|X|^2.
\end{equation}
As global dimension is defined in terms of morphisms of a fusion category $\mathcal{C}$, for all $\sigma\in\mathrm{Gal}(\overline{\mathbb{Q}}/\mathbb{Q})$ we have $\dim(\mathcal{C}^\sigma)=\sigma(\dim(\mathcal{C}))$ while $\mathrm{FPdim}(\mathcal{C}^\sigma)=\mathrm{FPdim}(\mathcal{C})$ because the underlying Grothendieck ring is unchanged.  This makes the category
\begin{equation}
\overline{\mathcal{C}}:=\boxtimes_{\sigma\in\mathrm{Gal}(\mathbb{Q}(\dim(\mathcal{C}))/\mathbb{Q})}\mathcal{C}^\sigma
\end{equation}
incredibly useful where $\boxtimes$ is the \emph{Deligne product} of fusion categories \cite[Section 4.6]{tcat}.  In particular, the global dimension is $\dim(\overline{\mathcal{C}})=N(\dim(\mathcal{C}))$ while the Frobenius-Perron dimension is $\mathrm{FPdim}(\overline{\mathcal{C}})=\mathrm{FPdim}(\mathcal{C})^{d_{\dim(\mathcal{C})}}$, recalling that $d_{\dim(\mathcal{C})}=[\mathbb{Q}(\dim(\mathcal{C})):\mathbb{Q}]$.

\subsubsection{Sphericality}

A \emph{pivotal structure} $\delta$ on a fusion category $\mathcal{C}$ is a collection of functorial tensor isomorphisms $\delta_X:X\to (X^\ast)^\ast$ for all $X\in\mathcal{O}(\mathcal{C})$.  These may be represented by invertible scalars $\dim_\delta(X)$, the \emph{categorical dimensions} of simple objects, which collect to form a ring homomorphism $\mathrm{dim}_\delta:K(\mathcal{C})\to\mathbb{C}$ \cite[Proposition 4.7.12]{tcat}.   We say a pivotal structure $\delta$ is \emph{spherical} if $\dim_\delta(X)=\dim_\delta(X^\ast)$ for all $X\in\mathcal{O}(\mathcal{C})$.  It is a slight (common) abuse of language to say $\delta$ is a \emph{spherical structure} rather than $\delta$ is a pivotal structure which is spherical.  A \emph{spherical fusion category} is a fusion category $\mathcal{C}$ with a chosen spherical structure $\delta$.  But in what follows, the choice of spherical structure is irrelevant or uniquely determined so we will omit $\delta$ from all notation.  If $\mathcal{C}$ is a spherical fusion category, one has $\dim(X)=|X|^2$ for $X\in\mathcal{O}(\mathcal{C})$ \cite[Corollary 2.10]{ENO} so one may replace squared norms with categorical dimensions in (\ref{globaldimension}).  Sphericality is potentially a very weak assumption on a fusion category as all currently known examples of fusion categories posess a spherical structure.  Whether this is true in general has been an open problem of great interest.  When $\dim(\mathcal{C})=\mathrm{FPdim}(\mathcal{C})$ we say that $\mathcal{C}$ is \emph{pseudounitary}.  Pseudounitary fusion categories have a unique spherical structure such that the categorical and Frobenius-Perron dimensions of all simple objects coincide \cite[Proposition 8.23]{ENO}.  It will be assumed that a pseudounitary fusion category is equipped with this distinguished spherical structure unless otherwise indicated.

\begin{example}
If $\mathcal{C}$ is a fusion category such that $\mathrm{FPdim}(X)=1$ for all $X\in\mathcal{O}(\mathcal{C})$ (such $X$ are called \emph{invertible}), we say that $\mathcal{C}$ is \emph{pointed}.  Pointed categories are pseudounitary \cite[Proposition 8.24]{ENO} with $\mathrm{FPdim}(\mathcal{C})=\dim(\mathcal{C})=|\mathcal{O}(\mathcal{C})|$ and moreover are equivalent to $\mathrm{Vec}_G^\omega$ for some finite group $G$ and $3$-cocycle $\omega$, as fusion categories.
\end{example}

\subsubsection{Modularity}

The correct notion of $\otimes$-commutativity in fusion categories is that of a braiding.  We say a fusion category $\mathcal{C}$ is \emph{braided} if there exists a family of functorial isomorphisms $\sigma_{X,Y}:X\otimes Y\to Y\otimes X$ for all $X,Y\in\mathcal{O}(\mathcal{C})$ satisfying braid-like relations \cite[Definition 8.1.1]{tcat} which will be irrelevant for our arguments.  The complexity of a braiding is measured by the \emph{symmetric center} $\mathcal{C}'$ which has simple objects $X$ such that $\sigma_{Y,X}\sigma_{X,Y}=\mathrm{id}_{X\otimes Y}$ for all $Y\in\mathcal{O}(\mathcal{C})$.  The two extremes are braided fusion categories for which $\mathcal{C}'=\mathcal{C}$ or $\mathcal{C}'=\mathrm{Vec}$ which we call \emph{symmetrically braided} and $\emph{nondegenerately braided}$, respectively.  A spherical fusion category which is nondegenerately braided is a \emph{modular tensor category}.  Modular tensor categories are so-named because they give two important $|\mathcal{O}(\mathcal{C})|$-dimensional representations of the modular group $SL_2(\mathbb{Z})$ generated by $x,y$ such that $x^2=1$ and $(xy)^3=1$.  First, there exists a projective representation $x\mapsto\tilde{S}$, $y\mapsto\tilde{T}$ where $\tilde{T}$ is a diagonal matrix consisting of roots of unity $\tilde{t}_X$ for $x\in\mathcal{O}(\mathcal{C})$ \cite[Corollary 8.18.2]{tcat} known as \emph{twists} of simple objects.  The collection of all scalars $\tilde{s}_{X,Y}$ and $\tilde{t}_X$ for all $X,Y\in\mathcal{O}(\mathcal{C})$ is called the \emph{unnormalized modular data} of $\mathcal{C}$.  We may then define the \emph{Gauss sums}
\begin{equation}
\tau^\pm:=\sum_{X\in\mathcal{O}(\mathcal{C})}(\tilde{t}_X)^{\pm1}\dim(X)^2.
\end{equation}
Choose any $\gamma\in\mathbb{C}$ which is a cube root of the root of unity $\tau^+/\sqrt{\dim(\mathcal{C})}$ where $\sqrt{\dim(\mathcal{C})}$ is the positive square root of the global dimension.  Then $s:=(1/\sqrt{\dim(\mathcal{C})})\tilde{s}$ and $t:=\gamma^{-1}\tilde{t}$ define a linear representation of $SL_2(\mathbb{Z})$.   The collection of all scalars $s_{X,Y}$ and $t_X$ for all $X,Y\in\mathcal{O}(\mathcal{C})$ is called the \emph{normalized modular data} of $\mathcal{C}$. 

\subsubsection{Galois action}

A powerful characteristic of a modular tensor category is the existence of a Galois action on its normalized modular data \cite{gannoncoste,de1991markov}.  Following \cite[Theorem II]{dong2015congruence}, let $n,\tilde{n}\in\mathbb{Z}_{\geq1}$ be the orders of $t$ and $\tilde{t}$, respectively.  Then the normalized and unnormalized modular data of $\mathcal{C}$ are contained in $\mathbb{Q}(\zeta_n)$ and $\mathbb{Q}(\zeta_{\tilde{n}})$, respectively, with $\tilde{n}\mid n$, i.e. \!$\mathbb{Q}(\zeta_{\tilde{n}})\subseteq\mathbb{Q}(\zeta_n)$.  And for each $\sigma\in\mathrm{Gal}(\mathbb{Q}(\zeta_n)/\mathbb{Q})$, there exists a unique permutation $\hat{\sigma}$ of $\mathcal{O}(\mathcal{C})$ such that
\begin{equation}\label{s}
\sigma\left(\dfrac{s_{X,Y}}{s_{\mathbbm{1},Y}}\right)=\dfrac{s_{X,\hat{\sigma}(Y)}}{s_{\mathbbm{1},\hat{\sigma}(Y)}}
\end{equation}
and
\begin{equation}
\sigma^2(t_X)=t_{\hat{\sigma}(X)}
\end{equation}
for all $X,Y\in\mathcal{O}(\mathcal{C})$.  Of particular interest is the square of Equation (\ref{s}) when $Y=\mathbbm{1}$ which describes the Galois action on squared categorical dimensions:
\begin{equation}
\dfrac{\sigma(\dim(X)^2)}{\sigma(\dim(\mathcal{C}))}=\dfrac{\dim(\hat{\sigma}(X)^2)}{\dim(\mathcal{C})}.
\end{equation}
Here we have used the fact that $\sigma(s_{X,Y}^2)=s_{X,\hat{\sigma}(Y)}^2=s_{\hat{\sigma}(X),Y}^2$.


\subsection{Formal codegrees}\label{formalcodegrees}

In \cite{codegrees}, Ostrik introduced the notion of the (multi-set of) formal codegrees of a fusion ring $R$, e.g. \!the Grothendieck ring of a fusion category (or more generally a \emph{based ring}).  Here we briefly recount the definition for completeness, although often in practice there are more efficient methods for computing and studying these scalars which we subsequently summarize.

\subsubsection{Definition and properties}

The complexified ring $S:=R\otimes_\mathbb{Z}\mathbb{C}$ where $R$ is a fusion ring possesses a nondegenerate pairing $S\times S^\ast\to\mathbb{C}$ and so for each irreducible representation $\varphi$ of $R$, we define $r_\varphi\in S$ as that which corresponds to $\mathrm{Tr}(\cdot,\varphi)\in S^\ast$ (trace in the sense of linear algebra).  The elements $r_\varphi\in S$ are central and thus act by a nonzero scalar $f_\varphi\in\mathbb{C}$ on $\varphi$ and by zero on any other irreducible representation of $R$.  The scalars $f_\varphi$ over all irreducible representations of $R$ (denoted $\mathrm{Irr}(R)$) are called the \emph{formal codegrees} of $R$ and more specifically, if $\mathcal{C}$ is a fusion category, we say the formal codegrees of $K(\mathcal{C})$ are the formal codegrees of $\mathcal{C}$ itself.  From this definition and \cite[Theorem 8.51]{ENO}, the formal codegrees of a fusion category are cyclotomic integers whose Galois conjugates are all greater than or equal to 1 \cite[Remark 2.12]{ost15}.  It was shown in \cite[Theorem 1.2]{codegrees} that the formal codegrees of a fusion category are algebraic $d$-numbers as well.

\par Let $B\subset R$ be a basis of a fusion ring with duality $b\mapsto b^\ast$ for all $b\in B$.  If $\varphi$ is the one-dimensional (hence, irreducible) representation induced by a ring homomorphism $g_\varphi:R\to\mathbb{C}$, then the corresponding formal codegree of $R$ is
\begin{equation}\label{ate}
f_\varphi=\sum_{b\in B}g_\varphi(b)g_\varphi(b^\ast)=\sum_{b\in B}|g_\varphi(b)|^2.
\end{equation}
In what follows we will not notationally differentiate between a ring homomorphism and the corresponding one-dimensional representation.  Let $\mathcal{C}$ be a fusion category with Grothendieck ring $K(\mathcal{C})$.  Then Frobenius-Perron theory provides one ring homomorphism: $\mathrm{FPdim}:K(\mathcal{C})\to\mathbb{C}$, and when $\mathcal{C}$ is spherical, categorical dimension provides another: $\dim:K(\mathcal{C})\to\mathbb{C}$ .  Moreover, if $\sigma\in\mathrm{Gal}(\overline{\mathbb{Q}}/\mathbb{Q})$, then one may conjugate any ring homomorphism $\varphi:R\to\mathbb{C}$ of a fusion ring $R$ by $\sigma$.  Specifically, $\sigma(\varphi):R\to\mathbb{C}$ is defined by $\sigma(\varphi)(x):=\sigma(\varphi(x))$ for all $x\in R$ and evidently $\sigma(\varphi)\cong\varphi$ if and only if $\sigma(\varphi(x))=\varphi(x)$ for all $x\in R$.

\par One should often think of formal codegrees of a fusion category $\mathcal{C}$ as an abstraction or generalization of global/Frobenius-Perron dimension for multiple reasons.  Firstly, $\mathrm{FPdim}(\mathcal{C})$ is the maximal real eigenvalue of the operator of $\otimes$-ing with $R:=\oplus_{X\in\mathcal{O}(\mathcal{C})}X\otimes X^\ast$.  But more generally \cite[Remark 2.11]{ost15}, the remaining spectrum of this operator consists of $\dim(\varphi)f_\varphi$ for all $\varphi\in\mathrm{Irr}(K(\mathcal{C}))$ appearing with multiplicity $\dim(\varphi)^2$.  In concrete examples this can be an efficient method for computing formal codegrees.  Secondly \cite[Corollary 2.14]{ost15}, each formal codegree $f$ divides $\dim(\mathcal{C})$ and by \cite[Proposition 2.10]{ost15} the following equalities hold:
\begin{equation}\label{twelve}
\sum_{\varphi\in\mathrm{Irr}(K(\mathcal{C}))}\dfrac{\dim(\varphi)}{f_\varphi}=1\qquad\iff\qquad\sum_{\varphi\in\mathrm{Irr}(K(\mathcal{C}))}\dim(\varphi)\dfrac{\dim(\mathcal{C})}{f_\varphi}=\dim(\mathcal{C}).
\end{equation}
For a modular tensor category, all irreducible representations of $K(\mathcal{C})$ are one-dimensional from commutativity, and Verlinde's formula implies the formal codegrees are $\dim(\mathcal{C})/\dim(X)^2$ over all $X\in\mathcal{O}(\mathcal{C})$, so (\ref{twelve}) is the definition of global dimension in (\ref{globaldimension}).


\subsubsection{A series of examples}

We compute the formal codegrees of several fusion categories to illustrate techniques and concepts used in further proofs.

\begin{example}\label{exampletoooo}
Let $n\in\mathbb{Z}_{\geq2}$.  A basis of $K(\mathrm{Vec}_{\mathbb{Z}/n\mathbb{Z}})$ is indexed by elements of, and has fusion rules mimicking, the group operation of $\mathbb{Z}/n\mathbb{Z}$.  As such, any irreducible representation is one-dimensional, corresponding to a ring homomorphism $\varphi:K(\mathrm{Vec}_{\mathbb{Z}/n\mathbb{Z}})\to\mathbb{C}$ determined by $\varphi(x)$ where $x\in\mathbb{Z}/n\mathbb{Z}$ is any generating element.  Further, $1=\varphi(1_R)=\varphi(x^n)=\varphi(x)^n$ and $\varphi(x)$ must be an $n$th root of unity.  Denote the ring homomorphisms $\varphi_j(x):=\zeta_n^j$ for $0\leq j\leq n-1$.  The corresponding one-dimensional representations $\varphi_0,\ldots,\varphi_{n-1}$ are all nonisomorphic, and many are Galois conjugate with one another (except $\varphi_0$) depending on $n$.  Using the formula in (\ref{ate}) we then have formal codegrees
\begin{equation}
\begin{array}{|c|ccccc|}
\hline \text{formal codegree}& n & n & \cdots & n & n \\\hline
\text{irr. rep.} & \dim=\mathrm{FPdim}=\varphi_0 & \varphi_1 & \cdots & \varphi_{n-2} & \varphi_{n-1} \\\hline
\end{array}
\end{equation} 
Observe that the formal codegrees are indistinguishable, but they correspond to distinct irreducible representations.  In particular, the irreducible representations corresponding to the global and Frobenius-Perron dimensions are identical, and no other irreducible representation is Galois conjugate to them.
\end{example}

\begin{example}\label{exampletwo}
Let $\mathrm{Fib}$ be the unique rank 2 spherical fusion category \cite{ostrik} with $\mathcal{O}(\mathrm{Fib})=\{\mathbbm{1},X\}$, nontrivial fusion rule $X\otimes X\cong\mathbbm{1}\oplus X$, and $\dim(X)=[2]\in\mathbb{Q}(\zeta_5)^+$, often called the Fibonacci category.  We will compute the formal codegrees of $\overline{\mathrm{Fib}}:=\mathrm{Fib}\boxtimes\mathrm{Fib}^\sigma$, where $\sigma$ is the nontrivial Galois automorphism of $\mathbb{Q}(\zeta_5)^+$.  One may easily compute $\mathrm{FPdim}(\overline{\mathrm{Fib}})=5[2]^2$ and $\dim(\overline{\mathrm{Fib}})=5$.  Therefore $f_{\mathrm{FPdim}}=5[2]^2$, $f_{\sigma(\mathrm{FPdim})}=5\sigma([2])^2$, and $f_{\dim}=5$ are formal codgrees of $\overline{\mathrm{Fib}}$.  But $\dim$ has a ``hidden'' formal codegree: although $\sigma$ fixes the formal codegree $5$, it does not fix the representation $\dim$ itself.  In particular the values $\dim$ takes on the nontrivial objects of $\mathrm{Fib}$ and $\mathrm{Fib}^\sigma$ are swapped by $\sigma$ by design.  Therefore the cardinality of the Galois orbit of $\dim$ is 2, while the cardinality of the Galois orbit of the corresponding formal codegrees is 1.  Both orbits have cardinality 2 for the representation $\mathrm{FPdim}$.
\begin{equation}
\begin{array}{|c|cccc|}
\hline\text{formal codegree} & 5 & 5 & 5[2]^2 & 5\sigma([2])^2 \\\hline
\text{irr. rep.} & \dim & \sigma(\dim) & \mathrm{FPdim} & \sigma(\mathrm{FPdim})  \\\hline
\end{array}
\end{equation}
This is a complete collection of formal codegrees of $\overline{\mathrm{Fib}}$ using (\ref{twelve}) since
\begin{equation}
\dfrac{1}{5}+\dfrac{1}{5}+\dfrac{1}{5[2]^2}+\dfrac{1}{5\sigma([2])^2}=1.
\end{equation}
\end{example}

\begin{example}
Let $\mathcal{C}$ be the spherical fusion category constructed via the representation theory of $\mathcal{U}_q(\mathfrak{so}_5)$ where $q^2$ is a ninth root of unity \cite[Section 5.2]{rowell}.  In particular, everything we consider below takes place in $\mathbb{Q}(\zeta_9)^+$.  This extension has degree 3 so let $\sigma$ be the generating Galois automorphism such that $\sigma(\cos(\pi/9))=\cos(2\pi/9)$.  The adjoint subcategory $\mathcal{C}_\mathrm{ad}$ \cite[Section 3.1]{nilgelaki} is rank $6$ and with a particular ordering of the basis, the Frobenius-Perron and categorical dimensions of simple objects are, respectively, $1,\sigma([2])[4],[2][4],[2],[2],[2]$ and $1,1,-1,[2],\sigma([2]),\sigma^2([2])$.  Hence all Galois conjugates (by $\sigma$, $\sigma^2$) of $\dim$ and $\mathrm{FPdim}$ are nonisomorphic and using (\ref{ate}) one should verify the following collection of formal codegrees of $\mathcal{C}_\mathrm{ad}$ is complete using (\ref{twelve}).
\begin{equation}
\begin{array}{|c|cccccc|}
\hline\text{formal codegree}&9 & 9 & 9 & 9[4]^2 & 9\sigma([4])^2 & 9\sigma^2([4])^2 \\\hline
\text{irr. rep.}& \dim & \sigma(\dim) & \sigma^2(\dim) & \mathrm{FPdim} & \sigma(\mathrm{FPdim}) & \sigma^2(\mathrm{FPdim})  \\\hline
\end{array}
\end{equation}
In this example we see that rational integer global dimensions arise outside of weakly integral examples and ``constructed'' examples by taking Deligne products over all Galois conjugates.
\end{example}

\begin{example}\label{neargroup}
As a final example we will compute the formal codegrees of the \emph{near-group} fusion category $\mathcal{N}(G,k)$ \cite{MR3167494,izumi2001structure} where $G$ is a finite abelian group and $k\in\mathbb{Z}_{\geq1}$ using \cite[Lemma 2.6]{codegrees}.  We have $|\mathcal{O}(\mathcal{N}(G,k))|=|G|+1$: $|G|$ of the simple objects are invertible and have the fusion rules of the group $G$ and for each $g\in G$, the remaining simple object $\rho$ satisfies $\rho g=g\rho=\rho$.  The only non-trivial fusion rule is $\rho^2=k|G|\rho\oplus\bigoplus_{g\in G}g$.  Thus $\mathrm{FPdim}(\rho)$ is a root of $x^2-k|G|x-|G|$.  The matrix of $\otimes$-ing with $R=\rho^2\oplus|G|\mathbbm{1}$ is
\begin{equation}\label{matrix}
\left[\begin{array}{ccccc}
|G|+1 & 1 & \cdots & 1 & k|G| \\
1 & |G|+1 & \cdots & 1 & k|G| \\
\vdots & \vdots & \ddots & \vdots & \vdots \\
1 & \cdots & \cdots & |G|+1 & k|G| \\
k|G| & k|G| & \cdots & k|G| & |G|(k^2|G|+2)
\end{array}\right].
\end{equation}
As an exercise one should compute that the eigenvalues of $(\ref{matrix})$ are $|G|$ with multiplicity $|G|-1$ and the roots of $x^2-|G|(k^2|G|+4)x+|G|^2(k^2|G|+4)$ which are $\mathrm{FPdim}(\mathcal{N}(G,k))$ and its Galois conjugate.  The fusion rules of $\mathcal{N}(G,k)$ are commutative, thus all irreducible representations of its Grothendieck ring are one-dimensional.  Moreover these eigenvalues are the formal codegrees of $\mathcal{N}(G,k)$ on the nose.
\end{example}


\section{Norm finiteness}\label{nooorm}

For any $\alpha\in\mathbb{A}$ whose Galois conjugates are all greater than or equal to $1$, it is clear that $\alpha\leq N(\alpha)$.  Hence for any fusion category $\mathcal{C}$, $\mathrm{FPdim}(\mathcal{C})\leq N(\mathrm{FPdim}(\mathcal{C}))$ and moreover there are finitely-many fusion categories $\mathcal{C}$ for any fixed $N(\mathrm{FPdim}(\mathcal{C}))\in\mathbb{Z}_{\geq1}$ by \cite[Corollary 3.13]{paul}.  The similar result for global dimension is more subtle as it is not known if the set of all global dimensions of fusion categories (or even spherical fusion categories) is discrete \cite[Section 1.2]{ostrikremarks}.  Recall the definition $\overline{\mathcal{C}}:=\boxtimes_{\sigma\in\mathrm{Gal}(\mathbb{Q}(\dim(\mathcal{C}))/\mathbb{Q})}\mathcal{C}^\sigma$ from Section \ref{dimsec} for the following proof.

\begin{theorem}\label{normfiniteness}
Let $m\in\mathbb{Z}_{\geq1}$.  There exist finitely-many fusion categories $\mathcal{C}$ up to equivalence with $N(\dim(\mathcal{C}))=m$.
\end{theorem}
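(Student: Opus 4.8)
The plan is to reduce to the known norm finiteness for Frobenius--Perron dimension: for each fixed value of $N(\mathrm{FPdim}(\mathcal{C}))$ there are only finitely many fusion categories up to equivalence \cite[Corollary 3.13]{paul}. It therefore suffices to bound $N(\mathrm{FPdim}(\mathcal{C}))$ in terms of $m$, since then $N(\mathrm{FPdim}(\mathcal{C}))$ ranges over a finite set of positive integers, each contributing finitely many categories. Because $\mathrm{FPdim}(\mathcal{C})$ is a formal codegree, it divides $\dim(\mathcal{C})$; writing $\dim(\mathcal{C}) = \mathrm{FPdim}(\mathcal{C})\cdot\gamma$ with $\gamma$ an algebraic integer and taking norms in a cyclotomic field $\mathbb{L}$ containing both, multiplicativity gives $N_{\mathbb{L}}(\mathrm{FPdim}(\mathcal{C})) \mid N_{\mathbb{L}}(\dim(\mathcal{C})) = m^{[\mathbb{L}:\mathbb{Q}(\dim(\mathcal{C}))]}$, whence $N(\mathrm{FPdim}(\mathcal{C})) \leq m^{d_{\mathrm{FPdim}(\mathcal{C})}/d_{\dim(\mathcal{C})}} \leq m^{d_{\mathrm{FPdim}(\mathcal{C})}}$. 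Since $\mathrm{FPdim}(\mathcal{C})$ is the Perron eigenvalue of the $|\mathcal{O}(\mathcal{C})| \times |\mathcal{O}(\mathcal{C})|$ integer matrix of tensoring with $\bigoplus_{X} X\otimes X^{\ast}$, its degree satisfies $d_{\mathrm{FPdim}(\mathcal{C})} \leq |\mathcal{O}(\mathcal{C})|$, so the whole argument rests on bounding the number of simple objects $|\mathcal{O}(\mathcal{C})|$ by a function of $m$.

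For this I would prove the estimate $|\mathcal{O}(\mathcal{C})| \leq m$ using an averaged trace of squared norms. Passing to $\overline{\mathcal{C}}$ gives $\dim(\overline{\mathcal{C}}) = N(\dim(\mathcal{C})) = m \in \mathbb{Z}$ and $|\mathcal{O}(\overline{\mathcal{C}})| = |\mathcal{O}(\mathcal{C})|^{d_{\dim(\mathcal{C})}} \geq |\mathcal{O}(\mathcal{C})|$, so it is enough to bound $|\mathcal{O}(\overline{\mathcal{C}})|$. Each squared norm $|Y|^2$ for $Y\in\mathcal{O}(\overline{\mathcal{C}})$ is a totally positive algebraic integer, since its Galois conjugates are themselves squared norms of the Galois-conjugate categories and hence strictly positive. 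Fix a cyclotomic field $\mathbb{Q}(\zeta_N)$ containing every $|Y|^2$, with Galois group $G$. Applying any $\rho\in G$ to the identity $m = \sum_{Y}|Y|^2$ fixes the rational number $m$, and averaging over $G$ yields
\begin{equation*}
m = \sum_{Y\in\mathcal{O}(\overline{\mathcal{C}})} \frac{1}{|G|}\sum_{\rho\in G}\rho\big(|Y|^2\big) = \sum_{Y\in\mathcal{O}(\overline{\mathcal{C}})} \frac{\mathrm{Tr}(|Y|^2)}{d_{|Y|^2}}.
\end{equation*}
The number-theoretic crux is that the absolute trace of a totally positive algebraic integer $\beta$ is at least $1$: by the arithmetic--geometric mean inequality $\mathrm{Tr}(\beta)/d_{\beta} \geq N(\beta)^{1/d_{\beta}} \geq 1$, as $N(\beta)$ is a positive rational integer. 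Thus every summand is at least $1$ and $m \geq |\mathcal{O}(\overline{\mathcal{C}})| \geq |\mathcal{O}(\mathcal{C})|$.

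Combining the two paragraphs, $d_{\mathrm{FPdim}(\mathcal{C})} \leq |\mathcal{O}(\mathcal{C})| \leq m$, so $N(\mathrm{FPdim}(\mathcal{C})) \leq m^{m}$, and the finiteness follows from \cite[Corollary 3.13]{paul} as above. The step I expect to be most delicate is the bound $|\mathcal{O}(\mathcal{C})| \leq m$: the individual squared norms of $\overline{\mathcal{C}}$ need not exceed $1$ (for instance the Galois conjugate $\mathrm{Fib}^{\sigma}$ has a simple object of squared norm $(3-\sqrt{5})/2 < 1$), so a termwise comparison fails and one genuinely must average over the full Galois group and appeal to the fact that a totally positive algebraic integer has absolute trace at least $1$. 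This is precisely where the trace of formal-codegree-type data enters, in keeping with the theme of the paper.
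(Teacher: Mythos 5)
Your proof is correct, but it follows a genuinely different route from the paper's. The paper outsources the main finiteness to Ostrik's Theorem 5.1.1 of \cite{ostrikremarks} (there are finitely many categories of the form $\overline{\mathcal{D}}$ with $\dim(\overline{\mathcal{D}})=m$), uses the bound $\dim(\mathcal{D})>4/3$ of \cite[Theorem 4.1.1(i)]{ostrikremarks} to bound $d_{\dim(\mathcal{C})}$ and hence extract finitely many possible values $\mathrm{FPdim}(\mathcal{C})=\mathrm{FPdim}(\overline{\mathcal{C}})^{1/d_{\dim(\mathcal{C})}}$, and handles non-spherical categories by passing to the sphericalization $\tilde{\mathcal{C}}$. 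You avoid Theorem 5.1.1, the $4/3$ bound, and the sphericalization altogether: you convert $N(\dim(\mathcal{C}))=m$ into a bound on $N(\mathrm{FPdim}(\mathcal{C}))$ via divisibility of the global dimension by its formal codegrees (\cite[Corollary 2.14]{ost15}, stated in the paper's preliminaries for arbitrary fusion categories --- this is the one background fact you use in greater strength than the paper's own proof does), bound $d_{\mathrm{FPdim}(\mathcal{C})}$ by the rank, and bound the rank by $m$. That last ``delicate step'' is precisely \cite[Lemma 4.2.2]{ostrikremarks}, which the paper itself invokes in Proposition \ref{ranknorm} in the form $|\mathcal{O}(\mathcal{C})|^{d_{\dim(\mathcal{C})}}=|\mathcal{O}(\overline{\mathcal{C}})|\leq N(\dim(\mathcal{C}))$; your Galois-averaging argument (total positivity of squared norms plus absolute trace at least $1$) is the standard proof of that lemma, so you could simply cite it and sidestep justifying that squared norms are totally positive algebraic integers. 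One attribution deserves care: the finiteness of fusion categories with a \emph{fixed} Frobenius--Perron dimension is Ocneanu rigidity, not literally \cite[Corollary 3.13]{paul}, which supplies only discreteness of the set of Frobenius--Perron dimensions; the paper's introduction compresses the attribution the same way, so this is not a gap relative to the paper's standards. What your route buys: explicit effective bounds ($|\mathcal{O}(\mathcal{C})|\leq m$ and $\mathrm{FPdim}(\mathcal{C})\leq N(\mathrm{FPdim}(\mathcal{C}))\leq m^m$), a uniform treatment of spherical and non-spherical categories, and an argument very much in the spirit of the paper's Section \ref{divisors}, where divisibility of algebraic $d$-numbers is detected on norms. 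What the paper's route buys is brevity, given Ostrik's finiteness theorem as a black box.
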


\begin{proof}
Set $g:=\dim(\mathcal{C})$ for brevity.  It is clear that $m=N(g)=1$ if and only if $\mathcal{C}\simeq\mathrm{Vec}$.  Ostrik \cite[Theorem 4.1.1(i)]{ostrikremarks} has shown that if $\mathcal{C}$ is a nontrivial spherical fusion category, then $g>4/3$.  This implies that if $m=N(g)>1$, then $d_g\leq\left\lfloor\log(m)/\log(4/3)\right\rfloor:=k$.  Theorem 5.1.1 of \cite{ostrikremarks} states
\begin{equation}
\Sigma:=\{\text{eq. \!classes of }\overline{\mathcal{D}}:\mathcal{D}\text{ fusion category and }\dim(\overline{\mathcal{D}})=m\}
\end{equation}
is finite.  Thus $\{\mathrm{FPdim}(\overline{\mathcal{D}}):\overline{\mathcal{D}}\in \Sigma\}$ is finite.  But for each degree extension $1\leq d_g\leq k$, we have $\mathrm{FPdim}(\mathcal{C})=(\mathrm{FPdim}(\overline{\mathcal{C}}))^{1/d_g}$ which implies
\begin{equation}
\{\mathrm{FPdim}(\mathcal{D}):\mathcal{D}\text{ spherical fusion category and }\overline{\mathcal{D}}\in \Sigma\}
\end{equation}
is finite, and the spherical case is complete as the number of fusion categories of fixed Frobenius-Perron dimension (up to equivalence) is finite.  This implies norm finiteness for arbitrary fusion categories because each fusion category $\mathcal{C}$ posesses a \emph{sphericalization} $\tilde{\mathcal{C}}$ \cite[Section 5.3]{MR3523185} with $\dim(\tilde{\mathcal{C}})=2g$ (hence $\mathbb{Q}(\dim(\mathcal{C}))=\mathbb{Q}(\dim(\tilde{\mathcal{C}}))$), and moreover $N(\dim(\tilde{\mathcal{C}}))=2^{d_g}\cdot m$ where, as noted above, $d_g\leq k$.  
\end{proof}

\begin{proposition}\label{ranknorm}
Let $\mathcal{C}$ be a fusion category and $g:=\dim(\mathcal{C})$.  Then
\begin{equation}\label{doce}
d_g^{d_g}\leq|\mathcal{O}(\mathcal{C})|^{d_g}\leq N(g).
\end{equation}
\end{proposition}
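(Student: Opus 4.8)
The plan is to read both inequalities off the Galois-closed category $\overline{\mathcal{C}}=\boxtimes_{\sigma\in\mathrm{Gal}(\mathbb{Q}(g)/\mathbb{Q})}\mathcal{C}^\sigma$ of Section \ref{dimsec}. Since the Deligne product multiplies ranks and the underlying fusion ring (hence $|\mathcal{O}(\cdot)|$) is a Galois invariant, $\overline{\mathcal{C}}$ has rank $|\mathcal{O}(\mathcal{C})|^{d_g}$, while $\dim(\overline{\mathcal{C}})=N(g)$. Thus the right-hand inequality $|\mathcal{O}(\mathcal{C})|^{d_g}\le N(g)$ asserts precisely that $\overline{\mathcal{C}}$ has rank at most its global dimension, and the left-hand inequality $d_g^{d_g}\le|\mathcal{O}(\mathcal{C})|^{d_g}$ is equivalent to $d_g\le|\mathcal{O}(\mathcal{C})|$. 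I would prove these two assertions separately.

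For the right-hand inequality I would use the arithmetic--geometric mean inequality on the squared norms. Fix a cyclotomic field $\mathbb{L}=\mathbb{Q}(\zeta_n)$ containing every $|X|^2$ (possible by \cite[Theorem 8.51]{ENO}), set $G=\mathrm{Gal}(\mathbb{L}/\mathbb{Q})$, and recall that each $|X|^2$ is a \emph{totally positive} algebraic integer: for any embedding $\tau$, the value $\tau(|X|^2)$ is the squared norm of the corresponding object in the honest fusion category $\mathcal{C}^\tau$, hence a positive real. For each $\tau\in G$ one then has
\begin{equation}
\tau(g)=\sum_{X\in\mathcal{O}(\mathcal{C})}\tau(|X|^2)\ge|\mathcal{O}(\mathcal{C})|\Big(\prod_{X\in\mathcal{O}(\mathcal{C})}\tau(|X|^2)\Big)^{1/|\mathcal{O}(\mathcal{C})|}.
\end{equation}
Taking the product over all $\tau\in G$ and regrouping turns the geometric-mean factor into $\prod_X N_\mathbb{L}(|X|^2)^{1/|\mathcal{O}(\mathcal{C})|}$; each $|X|^2$ being a totally positive algebraic integer, its norm is a positive rational integer, so this factor is at least $1$. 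Hence $\prod_{\tau\in G}\tau(g)\ge|\mathcal{O}(\mathcal{C})|^{|G|}$, and using $N_\mathbb{L}(g)=N(g)^{[\mathbb{L}:\mathbb{Q}(g)]}$ together with $|G|=[\mathbb{L}:\mathbb{Q}(g)]\,d_g$ and extracting the exponent $[\mathbb{L}:\mathbb{Q}(g)]$ gives $N(g)\ge|\mathcal{O}(\mathcal{C})|^{d_g}$. This step uses no sphericality.

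For the left-hand inequality I would exhibit $g$ as an eigenvalue of the $|\mathcal{O}(\mathcal{C})|\times|\mathcal{O}(\mathcal{C})|$ nonnegative integer matrix $M$ of multiplication by $R=\bigoplus_X X\otimes X^\ast$ on $K(\mathcal{C})$. When $\mathcal{C}$ admits a pivotal structure $\delta$, the map $\dim_\delta:K(\mathcal{C})\to\mathbb{C}$ is a ring homomorphism whose formal codegree, by (\ref{ate}), is $\sum_X\dim_\delta(X)\dim_\delta(X^\ast)=\sum_X|X|^2=g$; by \cite[Remark 2.11]{ost15} this realizes $g$ as an eigenvalue of $M$. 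Since the characteristic polynomial of $M$ lies in $\mathbb{Z}[x]$, every Galois conjugate of $g$ is also a root, so the minimal polynomial of $g$, of degree $d_g$, divides it and $d_g\le|\mathcal{O}(\mathcal{C})|$.

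The main obstacle is making the left-hand inequality unconditional. The eigenvalue argument needs a pivotal (in particular spherical) structure so that $g$ is a formal codegree; for a hypothetical non-pivotal fusion category one can only pass to the sphericalization $\tilde{\mathcal{C}}$, and because $|\mathcal{O}(\tilde{\mathcal{C}})|=2|\mathcal{O}(\mathcal{C})|$ while $d_{\dim(\tilde{\mathcal{C}})}=d_g$, this route yields only $d_g\le 2|\mathcal{O}(\mathcal{C})|$ and loses the sharp constant. The delicate point is therefore to obtain $d_g\le|\mathcal{O}(\mathcal{C})|$ with no dimension homomorphism available; since every known fusion category is spherical the gap is invisible in practice, but closing it in full generality is where I expect the genuine difficulty to reside.
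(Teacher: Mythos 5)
Your right-hand inequality is correct, and your route is more self-contained than the paper's. The paper disposes of that half in one line, applying \cite[Lemma 4.2.2]{ostrikremarks} to $\overline{\mathcal{C}}$ to get $|\mathcal{O}(\mathcal{C})|^{d_g}=|\mathcal{O}(\overline{\mathcal{C}})|\leq\dim(\overline{\mathcal{C}})=N(g)$, whereas your AM--GM argument over all embeddings of a cyclotomic field $\mathbb{L}$ containing the squared norms (using total positivity of each $|X|^2$, the fact that $N_\mathbb{L}(|X|^2)\in\mathbb{Z}_{\geq1}$, and the relation $N_\mathbb{L}(g)=N(g)^{[\mathbb{L}:\mathbb{Q}(g)]}$) proves the needed instance of that lemma from first principles, with no sphericality hypothesis. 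This half is complete and genuinely different: the citation buys the paper brevity, while your argument buys independence from \cite{ostrikremarks}.

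The left-hand inequality is where the two proofs differ in substance, and the gap you flag is the whole story. The paper's justification of $d_g\leq|\mathcal{O}(\mathcal{C})|$ is exactly your formal-codegree count --- the Galois orbit of $g$ sits inside the multiset of formal codegrees of $K(\mathcal{C})$, which are indexed by irreducible representations of $K(\mathcal{C})\otimes_\mathbb{Z}\mathbb{C}$ and hence number at most $|\mathcal{O}(\mathcal{C})|$ --- but it is asserted for an \emph{arbitrary} fusion category, with no pivotality caveat. That is, the paper takes as available, from the formal codegree theory of \cite{codegrees,ost15,ostrikremarks} on which it builds, the statement that $\dim(\mathcal{C})$ (hence each of its conjugates) is a formal codegree of $K(\mathcal{C})$ without assuming a pivotal structure; the paper makes the same implicit appeal elsewhere, e.g.\ when it applies \cite[Theorem 4.1.1(i)]{ostrikremarks} to every Galois conjugate of $\dim(\mathcal{C})$ of a not-necessarily-spherical category. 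Granting that input, your eigenvalue argument goes through verbatim and the factor-of-two loss from sphericalization never arises. So you have correctly isolated the single non-elementary ingredient, but as written your proposal proves the stated inequality only for pivotal $\mathcal{C}$, and unconditionally only the weaker $d_g\leq2|\mathcal{O}(\mathcal{C})|$; to match the paper you should invoke, rather than attempt to re-derive, the fact that the global dimension of any fusion category is a formal codegree of its Grothendieck ring.
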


\begin{proof}
We have by \cite[Lemma 4.2.2]{ostrikremarks},
\begin{equation}
|\mathcal{O}(\mathcal{C})|^{d_g}=|\mathcal{O}(\overline{\mathcal{C}})|\leq\dim(\overline{\mathcal{C}})=N(g).
\end{equation}
The left-hand inequality in (\ref{doce}) is the fact that $d_g\leq|\mathcal{O}(\mathcal{C})|$ as the number of formal codegrees of $\mathcal{C}$ is less than or equal to the rank.
\end{proof}

If $\alpha\in\mathbb{A}$ is totally positive with minimal polynomial $x^n-a_1x^{n-1}+\cdots+(-1)^{n-1}a_{n-1}x+(-1)^na_n$, set $c_j(\alpha):=a_j$, a positive integer.

\begin{proposition}
Let $j,m\in\mathbb{Z}_{\geq1}$.  There exist finitely-many fusion categories $\mathcal{C}$ up to equivalence with $c_j(\dim(\mathcal{C}))=m$.
\end{proposition}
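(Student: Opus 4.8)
The plan is to reduce everything to the norm finiteness already established in Theorem \ref{normfiniteness}, by bounding $N(\dim(\mathcal{C}))$ in terms of the fixed data $j$ and $m$. Write $g:=\dim(\mathcal{C})$, set $n:=d_g$, and let $g_1,\dots,g_n$ denote the Galois conjugates of $g$. Each $g_i$ is the global dimension $\dim(\mathcal{C}^\sigma)$ of a Galois conjugate category, so $g_i\geq 1$; in particular $g$ is totally positive, $c_j(g)$ is defined, and it equals the $j$th elementary symmetric function $e_j(g_1,\dots,g_n)$, a positive integer. Note this already forces $j\leq n$, since otherwise $c_j$ is not a coefficient of the minimal polynomial.

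First I would bound the degree $n$. Because every $g_i\geq 1$, each of the $\binom{n}{j}$ monomials appearing in $e_j(g_1,\dots,g_n)$ is at least $1$, so
\[
m = c_j(g) = e_j(g_1,\dots,g_n) \geq \binom{n}{j}.
\]
For fixed $j\geq 1$ the quantity $\binom{n}{j}$ is strictly increasing and unbounded in $n$ (indeed $\binom{n+1}{j}/\binom{n}{j}=(n+1)/(n+1-j)>1$), so this inequality forces $n\leq B$ for some explicit bound $B=B(j,m)$, e.g.\ any $B$ satisfying $\binom{B+1}{j}>m$.

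Next I would bound each conjugate individually. Fixing an index $i$ and retaining only those $j$-subsets that contain $i$ gives
\[
m = e_j(g_1,\dots,g_n) \geq g_i\, e_{j-1}\bigl(\{g_\ell:\ell\neq i\}\bigr) \geq g_i\binom{n-1}{j-1} \geq g_i,
\]
using $g_\ell\geq 1$ for $\ell\neq i$ and $\binom{n-1}{j-1}\geq 1$. Hence $g_i\leq m$ for every $i$, and therefore
\[
N(g)=\prod_{i=1}^{n} g_i \leq m^{n} \leq m^{B}.
\]

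Finally, $N(g)$ is a positive integer no larger than $m^{B}$, so it takes only finitely many values, and Theorem \ref{normfiniteness} supplies finitely many fusion categories up to equivalence for each fixed value of $N(\dim(\mathcal{C}))$; the union over these finitely many values is finite, which finishes the argument. The only genuine content here is the pair of elementary symmetric-function estimates showing that pinning down a single coefficient $c_j$, under the constraint that all Galois conjugates of $\dim(\mathcal{C})$ are $\geq 1$, simultaneously caps the degree $d_g$ and each conjugate. I expect the only (modest) obstacle to be recording the total positivity of $\dim(\mathcal{C})$ for arbitrary, not necessarily spherical, fusion categories so that $c_j(\dim(\mathcal{C}))=e_j$ is legitimate; once that is in hand the reduction to norm finiteness is immediate.
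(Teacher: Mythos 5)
Your proof is correct, and while it follows the paper's overall strategy of reducing to Theorem \ref{normfiniteness} by bounding $N(\dim(\mathcal{C}))$, the key step is genuinely different. Both arguments first cap the degree $d_{\dim(\mathcal{C})}$ by a symmetric-function estimate: the paper uses Ostrik's bound that every Galois conjugate of $\dim(\mathcal{C})$ exceeds $4/3$ for nontrivial $\mathcal{C}$, whereas you only use that each conjugate equals $\dim(\mathcal{C}^\sigma)\geq 1$; either suffices, and this same observation disposes of your worry about total positivity, so $c_j(\dim(\mathcal{C}))=e_j(g_1,\dots,g_n)$ is indeed legitimate for arbitrary (not necessarily spherical) fusion categories. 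The divergence is in how the norm is then controlled. The paper invokes the fact that $\dim(\mathcal{C})$ is an algebraic $d$-number: by the coefficient-divisibility characterization (Equation (\ref{ddd}), \cite[Lemma 2.7]{codegrees}), $N(\dim(\mathcal{C}))^j$ divides $m^{d_{\dim(\mathcal{C})}}$, giving $N(\dim(\mathcal{C}))\leq m^{k/j}$ with $k$ its degree bound. You instead bound each conjugate individually, $g_i\leq m$, by retaining only the $j$-subsets containing a fixed index, which yields $N(\dim(\mathcal{C}))\leq m^{B}$. Your route is more elementary and more general: it never uses the $d$-number property at all, so it applies verbatim to any totally positive algebraic integer whose conjugates are all at least $1$. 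What the paper's route buys is a sharper bound (the exponent $k/j$ decreases with $j$) and a demonstration of the $d$-number divisibility machinery that powers the rest of Section \ref{divisors}; what yours buys is independence from that machinery and a cleaner degree bound $\binom{d_{\dim(\mathcal{C})}}{j}\leq m$. Both are complete proofs.
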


\begin{proof}
Note that if $\mathcal{C}$ is nontrivial, each of the Galois conjugates of $\dim(\mathcal{C})$ are greater than $4/3$ \cite[Theorem 4.1.1(i)]{ostrikremarks}.  Recalling that $m=c_j(\dim(\mathcal{C}))$ can be expressed as a symmetric function of its Galois conjugates,
\begin{equation}
m>\binom{d_{\dim(\mathcal{C})}}{j}\left(4/3\right)^j>(3/4)d_{\dim(\mathcal{C})},
\end{equation}
where the second inequality follows from $(4/3)^j>(3/4)j$ and $\binom{d_{\dim(\mathcal{C})}}{j}\geq d_{\dim(\mathcal{C})}/j$ for all $j\in\mathbb{Z}_{\geq1}$ with $j\leq d_{\dim(\mathcal{C})}$. Hence $k:=(4/3)m\geq d_{\dim(\mathcal{C})}$.  But by the $d$-number condition in Equation (\ref{ddd}), $N(\dim(\mathcal{C}))^j$ divides $m^{d_{\dim(\mathcal{C})}}$ for all $1\leq j\leq d_{\dim(\mathcal{C})}$.  In particular $N(\dim(\mathcal{C}))\leq\sqrt[j]{N^k}$, a fixed value.  Our result follows by Theorem \ref{normfiniteness}.
\end{proof}


\section{Rational integer divisors of formal codegrees}\label{divisors}

There has been much effort to study fusion categories whose Frobenius-Perron dimenson is a rational integer ($\mathcal{C}$ is \emph{weakly integral}) with small prime factorizations.  Some of the first examples of this are the classification of fusion categories whose Frobenius-Perron dimension is a rational prime \cite[Corollary 8.30]{ENO} or the square of a rational prime \cite[Proposition 8.32]{ENO}.  When $\mathcal{C}$ is not weakly integral one can study the more extreme case when $\mathrm{FPdim}(\mathcal{C})$ (hence the remainder of the formal codegrees of $\mathcal{C}$ by Theorem \ref{newprop}) has \emph{no} nontrivial rational integer divisors.  The most obvious case of this is if $\alpha$, a formal codegree of a fusion category, were a unit.  But any $\alpha\in\mathbb{A}^\times$ which is totally real is either 1, or $\sigma(\alpha)<1$ for some $\sigma\in\mathrm{Gal}(\overline{\mathbb{Q}}/\mathbb{Q})$ because $N(\alpha)=1$, hence $\alpha$ is not the formal codegree of any fusion category by \cite[Remark 2.12]{ost15}.  Thus the only fusion category with a unit for a formal codegree is $\mathrm{Vec}$ by \cite[Proposition 2.10]{ost15}.  Surprisingly, the rational integer divisors of a formal codegree of a fusion category can be identified by only measuring its norm.  This is far from true for arbitrary algebraic integers.

\begin{example}
Consider either root $\alpha$ of $x^2+ax+b$ where $a,b\in\mathbb{Z}\setminus\{0\}$, $\gcd(a,b)=1$, and $d_\alpha=2$.  Note that $\alpha$ is an algebraic integer, but $\alpha$ is not a $d$-number unless $b=\pm1$.  We have $N(\alpha)=b$ which may have any nonzero rational integer coprime to $a$ as a divisor.  But $\alpha$ itself is not divisible by any $m\in\mathbb{Z}\setminus\{0,\pm1\}$.  Indeed, $x^2+(a/m)x+(b/m^2)$ is the minimal polynomial of $\alpha/m$ over $\mathbb{Q}$ which lies in $\mathbb{Z}[x]$ if and only if $m\mid a$ and $m^2\mid b$ which cannot happen due to the coprime assumption.
\end{example}

\begin{proposition}\label{newlemma}
Let $\alpha,\beta\in\mathbb{A}$ be cyclotomic $d$-numbers (thus, nonzero) and $\mathbb{L}:=\mathbb{Q}(\alpha,\beta)$.  Then $\beta$ divides $\alpha$ if and only if $N_\mathbb{L}(\beta)$ divides $N_\mathbb{L}(\alpha)$.
\end{proposition}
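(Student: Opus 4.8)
The plan is to route both directions through a single identity: that a cyclotomic $d$-number, after taking its norm down to $\mathbb{L}$, equals (up to a unit of $\mathcal{O}_\mathbb{L}$) its own $n$-th power, where $n := [\mathbb{L}:\mathbb{Q}]$. Since $\mathbb{L} = \mathbb{Q}(\alpha,\beta)$ is cyclotomic it is contained in some $\mathbb{Q}(\zeta_m)$, hence abelian and in particular Galois over $\mathbb{Q}$, so $N_\mathbb{L}$ is defined, and $\alpha,\beta \in \mathbb{A}\cap\mathbb{L} = \mathcal{O}_\mathbb{L}$ makes all the norms in question rational integers. The forward direction then needs nothing but multiplicativity: if $\beta$ divides $\alpha$, the witness $\gamma$ with $\beta\gamma=\alpha$ is forced to equal $\alpha/\beta\in\mathbb{L}$, hence $\gamma\in\mathbb{A}\cap\mathbb{L}=\mathcal{O}_\mathbb{L}$, and $N_\mathbb{L}(\alpha)=N_\mathbb{L}(\beta)N_\mathbb{L}(\gamma)$ with $N_\mathbb{L}(\gamma)\in\mathbb{Z}$ exhibits $N_\mathbb{L}(\beta)\mid N_\mathbb{L}(\alpha)$. (Notably, the $d$-number hypothesis plays no role here.)

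For the converse I would first upgrade Equation (\ref{ddd}) from $\mathbb{Q}(\alpha)$ to $\mathbb{L}$. The $d$-number condition, applied to any lift of $\sigma\in\mathrm{Gal}(\mathbb{L}/\mathbb{Q})$ to $\mathrm{Gal}(\overline{\mathbb{Q}}/\mathbb{Q})$, gives $\sigma(\alpha)/\alpha\in\mathbb{A}$; applying it to $\sigma^{-1}$ and translating by $\sigma$ yields $\alpha/\sigma(\alpha)\in\mathbb{A}$ as well, so each ratio lies in $\mathbb{A}\cap\mathbb{L}=\mathcal{O}_\mathbb{L}$ together with its inverse, i.e. $\sigma(\alpha)/\alpha\in\mathcal{O}_\mathbb{L}^\times$. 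Factoring the norm accordingly,
\[
N_\mathbb{L}(\alpha) = \prod_{\sigma\in\mathrm{Gal}(\mathbb{L}/\mathbb{Q})}\sigma(\alpha) = \alpha^{n}\cdot u_\alpha, \qquad u_\alpha := \prod_{\sigma}\frac{\sigma(\alpha)}{\alpha}\in\mathcal{O}_\mathbb{L}^\times,
\]
and likewise $N_\mathbb{L}(\beta)=\beta^{n}u_\beta$ with $u_\beta\in\mathcal{O}_\mathbb{L}^\times$.

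Now assume $N_\mathbb{L}(\beta)\mid N_\mathbb{L}(\alpha)$ in $\mathbb{Z}$, say $N_\mathbb{L}(\alpha)=c\,N_\mathbb{L}(\beta)$ with $c\in\mathbb{Z}$. Substituting the two factorizations and dividing gives $(\alpha/\beta)^{n}=c\,u_\beta u_\alpha^{-1}\in\mathcal{O}_\mathbb{L}$. Hence $\theta:=\alpha/\beta\in\mathbb{L}$ is a root of the monic polynomial $X^{n}-\theta^{n}$ with coefficients in $\mathcal{O}_\mathbb{L}$, so $\theta$ is integral over $\mathcal{O}_\mathbb{L}$; as $\mathcal{O}_\mathbb{L}$ is integrally closed in $\mathbb{L}$, I conclude $\theta\in\mathcal{O}_\mathbb{L}\subseteq\mathbb{A}$, which is precisely $\beta\mid\alpha$. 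The crux is exactly this passage from $\theta^{n}\in\mathbb{A}$ back to $\theta\in\mathbb{A}$: it is where the $d$-number hypothesis is indispensable, since that hypothesis is what converts the divisibility of norms into integrality of the $n$-th power of the quotient—for a general algebraic integer the norm carries no such information, as the preceding Example illustrates. Everything downstream is the formal fact that $\mathcal{O}_\mathbb{L}$ is integrally closed.
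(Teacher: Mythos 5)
Your proof is correct and takes essentially the same route as the paper: both hinge on the identity that $\alpha^{[\mathbb{L}:\mathbb{Q}]}$ equals $N_\mathbb{L}(\alpha)$ times a unit for a cyclotomic $d$-number, and then recover integrality of $\alpha/\beta$ from integrality of its $[\mathbb{L}:\mathbb{Q}]$-th power. The only cosmetic differences are that you derive this identity directly from the definition of a $d$-number (the paper instead invokes Equation (\ref{ddd}) together with $N_\mathbb{L}(\alpha)=N(\alpha)^{[\mathbb{L}:\mathbb{Q}(\alpha)]}$), and that you treat the two implications separately where the paper runs a single chain of equivalences.
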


\begin{proof}
By the $d$-number condition in Equation (\ref{ddd}), there exist $u_\alpha,u_\beta\in\mathbb{A}^\times$ such that
\begin{equation}
\alpha^{[\mathbb{L}:\mathbb{Q}]}=(\alpha^{d_\alpha})^{[\mathbb{L}:\mathbb{Q}(\alpha)]}=(N(\alpha)\cdot u_\alpha)^{[\mathbb{L}:\mathbb{Q}(\alpha)]}=N_\mathbb{L}(\alpha)\cdot u_\alpha^{[\mathbb{L}:\mathbb{Q}(\alpha)]}
\end{equation}
and similary for $\beta$.  Thus
\begin{equation}
\left(\alpha/\beta\right)^{[\mathbb{L}:\mathbb{Q}]}=(N_\mathbb{L}(\alpha)/N_\mathbb{L}(\beta))u_\alpha^{[\mathbb{L}:\mathbb{Q}(\alpha)]}u_\beta^{-[\mathbb{L}:\mathbb{Q}(\beta)]}.
\end{equation} 
Hence $\alpha/\beta\in\mathbb{A}$ if and only if $(\alpha/\beta)^{[\mathbb{L}:\mathbb{Q}]}\in\mathbb{A}$ if and only if $N_\mathbb{L}(\alpha)/N_\mathbb{L}(\beta)\in\mathbb{A}$.
\end{proof}

Proposition \ref{newlemma} is true for arbitrary algebraic $d$-numbers but requires subtleties which are not needed in what follows.  Let $J$ be an indexing set for the prime rational integers.

\begin{corollary}\label{noocore}
Let $\alpha\in\mathbb{A}$ be a cyclotomic $d$-number with $N(\alpha)=\pm\prod_{j\in J}p_j^{k_j}$.  The following are equivalent:
\begin{itemize}
\item[(a)] $0\leq k_j<d_\alpha$ for all $j\in J$, and
\item[(b)] for all $m\in\mathbb{Z}_{\geq2}$, $m$ does not divide $\alpha$.
\end{itemize}
\end{corollary}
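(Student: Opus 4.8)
The plan is to establish the equivalence by proving the contrapositive in both directions, leaning entirely on Proposition \ref{newlemma} to convert divisibility of $\alpha$ by a rational integer into divisibility of norms. The key observation is that for $m \in \mathbb{Z}_{\geq 2}$, the integer $m$ is itself a (trivially cyclotomic) $d$-number with $\mathbb{Q}(m) = \mathbb{Q}$, so $d_m = 1$ and $N(m) = m$. Applying Proposition \ref{newlemma} with $\beta = m$ and the field $\mathbb{L} = \mathbb{Q}(\alpha)$, we have $N_\mathbb{L}(m) = m^{d_\alpha}$ (since $[\mathbb{L} : \mathbb{Q}(m)] = d_\alpha$) while $N_\mathbb{L}(\alpha) = N(\alpha)^{[\mathbb{L}:\mathbb{Q}(\alpha)]} = N(\alpha)$. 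Thus $m \mid \alpha$ in $\mathbb{A}$ if and only if $m^{d_\alpha}$ divides $N(\alpha)$ in $\mathbb{Z}$.

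Given this translation, the equivalence becomes a purely elementary statement about the prime factorization of $N(\alpha)$. First I would prove that (a) implies (b): assume $0 \leq k_j < d_\alpha$ for all $j$, and suppose toward a contradiction that some $m \in \mathbb{Z}_{\geq 2}$ divides $\alpha$. Then $m^{d_\alpha} \mid N(\alpha)$, and picking any prime $p_j$ dividing $m$ forces $p_j^{d_\alpha} \mid N(\alpha)$, so $k_j \geq d_\alpha$, contradicting (a). Conversely, for (b) implies (a) I would argue the contrapositive: if some $k_j \geq d_\alpha$, then $p_j^{d_\alpha} \mid N(\alpha)$, so by the translation above $p_j$ divides $\alpha$, violating (b) with $m = p_j$.

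I expect no serious obstacle here; the corollary is essentially a bookkeeping consequence of Proposition \ref{newlemma}. The one point requiring mild care is the computation of $N_\mathbb{L}(m)$ and $N_\mathbb{L}(\alpha)$ when $\mathbb{L} = \mathbb{Q}(\alpha)$, ensuring that the exponent $d_\alpha$ (rather than some other power) is the correct threshold: this is exactly the identity $N_\mathbb{L}(\beta) = N(\beta)^{[\mathbb{L}:\mathbb{Q}(\beta)]}$ recorded in the preliminaries. The rest reduces to the trivial fact that $m^{d_\alpha} \mid N(\alpha)$ for \emph{some} $m \geq 2$ is equivalent to $p^{d_\alpha} \mid N(\alpha)$ for \emph{some} prime $p$, which in turn is equivalent to some exponent $k_j$ reaching $d_\alpha$.
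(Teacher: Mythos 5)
Your proposal is correct and is precisely the argument the paper intends: the corollary is stated without proof as an immediate consequence of Proposition \ref{newlemma}, and your specialization $\beta = m$, $\mathbb{L} = \mathbb{Q}(\alpha)$, giving $m \mid \alpha$ if and only if $m^{d_\alpha} \mid N(\alpha)$, is exactly that deduction. The remaining prime-factorization bookkeeping is handled correctly in both directions.
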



\subsection{Results on general fusion categories}

\begin{theorem}\label{newprop}
Let $\mathcal{C}$ be a fusion category.  If $\mathcal{C}$ has a formal codegree $f$ such that for all $m\in\mathbb{Z}_{\geq2}$, $m$ does not divide $f$, then
\begin{itemize}
\item[(a)] all formal codegrees of $\mathcal{C}$ are Galois conjugate to $f$,
\item[(b)] $|\mathcal{O}(\mathcal{C})|=d_f$, and
\item[(c)] the Grothendieck ring of $\mathcal{C}$ is commutative.
\end{itemize}
\end{theorem}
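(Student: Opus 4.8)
The plan is to exploit the partition-of-unity identity (\ref{twelve}), $\sum_{\varphi}\dim(\varphi)/f_\varphi = 1$, by grouping its terms into Galois orbits. First I would record that the multiset of formal codegrees is stable under $\mathrm{Gal}(\overline{\mathbb{Q}}/\mathbb{Q})$: since the construction of $f_\varphi$ is defined over $\mathbb{Q}$, conjugation by $\sigma$ carries an irreducible representation $\varphi$ to an irreducible representation $\sigma(\varphi)$ with $f_{\sigma(\varphi)}=\sigma(f_\varphi)$ and $\dim(\sigma(\varphi))=\dim(\varphi)$. Writing $\varphi_0$ for a representation with $f_{\varphi_0}=f$ and $O_0$ for its Galois orbit, every $\varphi\in O_0$ shares one representation-dimension $d_0$ and has formal codegree a conjugate of $f$; because the Galois group acts transitively both on $O_0$ and on the $d_f$ conjugates of $f$, the equivariant surjection $\varphi\mapsto f_\varphi$ has all fibres of one common size $c_0$. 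Hence $O_0$ contributes exactly $d_0 c_0 \sum_{\tau}\tau(f)^{-1} = d_0 c_0\,\mathrm{Tr}(f^{-1})$ to (\ref{twelve}).

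The crux is the claim that primitivity of $f$ forces $\mathrm{Tr}(f^{-1})$ to be a positive rational integer. I would extract this from the $d$-number condition on the minimal polynomial $x^{d_f}+a_1 x^{d_f-1}+\cdots+a_{d_f}$ of $f$. Taking $j=d_f-1$ in the divisibility $(a_{d_f})^{j}\mid (a_j)^{d_f}$ of (\ref{ddd}) gives $N(f)^{d_f-1}\mid b^{d_f}$, where $b:=\sum_\tau\prod_{\rho\neq\tau}\rho(f)$ is the positive integer $(-1)^{d_f-1}a_{d_f-1}$, so that $\mathrm{Tr}(f^{-1})=\sum_\tau \tau(f)^{-1}=b/N(f)$. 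Corollary \ref{noocore} says that primitivity means precisely that every prime $p$ divides $N(f)$ to a power $k_p<d_f$; comparing $p$-adic valuations in $N(f)^{d_f-1}\mid b^{d_f}$ yields $v_p(b)\ge (d_f-1)k_p/d_f > k_p-1$, hence $v_p(b)\ge k_p$ for every $p$ and therefore $N(f)\mid b$. Thus $\mathrm{Tr}(f^{-1})\in\mathbb{Z}$, and since $f$ is totally positive this is a sum of positive reals, so $\mathrm{Tr}(f^{-1})\ge 1$.

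With this lemma the theorem follows by a squeeze. The orbit $O_0$ alone contributes $d_0 c_0\,\mathrm{Tr}(f^{-1})\ge 1$ to (\ref{twelve}), while every remaining term is strictly positive and the total equals $1$. Consequently there are no other orbits, $O_0=\mathrm{Irr}(K(\mathcal{C}))$, and $d_0=c_0=\mathrm{Tr}(f^{-1})=1$. That $O_0$ is everything and surjects onto the conjugates of $f$ gives (a). Since $c_0=1$ we get $|\mathrm{Irr}(K(\mathcal{C}))|=|O_0|=d_f$; and $d_0=1$ forces every irreducible representation to be one-dimensional, so $K(\mathcal{C})\otimes_{\mathbb{Z}}\mathbb{C}$ is commutative, giving (c), while the rank equals the number of irreducible representations, giving (b).

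I expect the trace lemma to be the main obstacle: the orbit bookkeeping is routine, but converting the multiplicative primitivity hypothesis (a condition on the norm, via Corollary \ref{noocore}) into the additive integrality statement $\mathrm{Tr}(f^{-1})\in\mathbb{Z}$ is exactly where the $d$-number structure (\ref{ddd}) does the real work, and it is what makes the squeeze tight. Notably this argument never uses that each formal codegree divides $\dim(\mathcal{C})$. Two points to keep straight are that $c_0$ counts representations sharing one codegree \emph{value} inside the orbit, so that $c_0=1$ must be read off from the squeeze rather than assumed, and the degenerate case $f=1$, which the same computation subsumes since then $d_f=1$ and $\mathcal{C}\simeq\mathrm{Vec}$.
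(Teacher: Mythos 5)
Your proposal is correct and takes essentially the same route as the paper: the identical $p$-adic valuation argument applied to the $d$-number divisibility $(a_{d_f})^{d_f-1}\mid (a_{d_f-1})^{d_f}$, combined with Corollary~\ref{noocore}, shows $N(f)$ divides $a_{d_f-1}$ so that $\mathrm{Tr}(f^{-1})\geq 1$, and the same squeeze against the identity (\ref{twelve}) then forces the Galois orbit of $f$ to exhaust all formal codegrees with one-dimensional representations. The only differences are cosmetic---you make explicit the Galois-orbit bookkeeping that the paper leaves implicit, and the divisibility condition you invoke comes from \cite[Lemma 2.7]{codegrees} (stated just before Equation (\ref{ddd})) rather than from (\ref{ddd}) itself.
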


\begin{proof}
Assume $f$ has minimal polynomial
\begin{equation}
x^n-a_1x^{n-1}+\cdots+(-1)^{n-1}a_{n-1}x+(-1)^na_n,
\end{equation}
where $n,a_j\in\mathbb{Z}_{\geq1}$ for all $1\leq j\leq n$ as $f$ is totally positive \cite[Remark 2.12]{ost15}.  If $f\in\mathbb{Z}$, then $f=1$ and $\mathcal{C}\simeq\mathrm{Vec}$ and the result is trivial, so we may assume $n>1$.  We have $a_n=N(f)$ and as $f$ is a $d$-number, $a_n^{n-1}$ divides $a_{n-1}^n$ \cite[Lemma 2.7(v)]{codegrees}.  Let $p_j$ be any rational prime dividing $a_n$.  If $p_j^{k_j}$ is the largest power of $p_j$ dividing $a_{n-1}$ and $p_j^{\ell_j}$ is the largest power of $p_j$ dividing $a_n$, then the largest power of $p_j$ dividing $a_{n-1}^n$ is $p_j^{nk_j}$.  Thus the above $d$-number condition implies $nk_j\geq(n-1)\ell_j$, or $k_j\geq \ell_j-\ell_j/n$.  Corollary \ref{noocore} implies that $\ell_j/n<1$, so $k_j\geq \ell_j$.  Hence $p_j^{\ell_j}$ divides $a_{n-1}$ for all primes $p_j$ dividing $a_n$, and therefore $a_{n-1}\geq a_n$.  This implies
\begin{equation}
\sum_{\sigma\in\mathrm{Gal}(\mathbb{Q}(f)/\mathbb{Q})}\dfrac{1}{\sigma(f)}=\dfrac{a_{n-1}}{a_n}\geq1.\label{theeq}
\end{equation}
Moreover this sum is precisely $1$ by \cite[Proposition 2.10]{ost15} and (a) follows.  Claim (b) follows as the number of distinct formal codegrees must be less than or equal to $|\mathcal{O}(\mathcal{C})|$, and claim (c) follows from \cite[Example 2.18]{ost15}.
\end{proof}

\begin{example}\label{sl2}
Consider the fusion categories $\mathcal{T}_\kappa:=\mathcal{C}(\mathfrak{sl}_2,\kappa-2)_\mathrm{ad}$ for prime $\kappa\in\mathbb{Z}_{\geq3}$ (refer to \cite[Section 2.3]{schopieray2} for a basic introduction).  It is well-known that $\dim(\mathcal{T}_\kappa)=\kappa/(4\sin^2(\pi/\kappa))$.  Hence $\mathbb{Q}(\dim(\mathcal{C}))=\mathbb{Q}(\sin^2(\pi/\kappa))$.  Let $\varphi$ be the Euler totient function.
For any $n\in\mathbb{Z}_{\geq1}$,
\begin{equation}
[\mathbb{Q}(\sin(2\pi/n)):\mathbb{Q}]=\left\{\begin{array}{ccc}(1/2)\varphi(n) & : & n\equiv0\pmod{8} \\[0.25cm] (1/4)\varphi(n) & : & n\equiv4\pmod{8} \\[0.25cm] \varphi(n) & : & \mathrm{else}\end{array}\right.
\end{equation}
and $[\mathbb{Q}(\sin(\pi/\kappa)):\mathbb{Q}(\sin^2(\pi/\kappa))]=2$ when $\kappa$ is odd.  Thus
\begin{equation}
[\mathbb{Q}(\dim(\mathcal{T}_\kappa)):\mathbb{Q}]=(1/2)\varphi(\kappa)=(1/2)(\kappa-1).
\end{equation}
We leave it as an exercise to verify that $N(\dim(\mathcal{T}_\kappa))=\kappa^{(\kappa-3)/2}$.  In particular, Theorem \ref{newprop} implies the set of all formal codegrees of $\mathcal{T}_\kappa$ is the same as the set of Galois conjugates of $\dim(\mathcal{T}_\kappa)$.  Moreover, if $\kappa_1,\ldots,\kappa_n$ is any finite set of distinct odd primes and $\sigma_1,\ldots,\sigma_n$ are any (not necessarily distinct) elements of $\mathrm{Gal}(\overline{\mathbb{Q}}/\mathbb{Q})$, $\boxtimes_{j=1}^n\mathcal{T}_{\kappa_j}^{\sigma_j}$ satisfies the hypotheses of Theorem \ref{newprop} as well.
\end{example}

\begin{corollary}\label{cortooo}
Let $\mathcal{C}$ be a fusion category.  If the Grothendieck ring of $\mathcal{C}$ is noncommutative, for every formal codegree $f$ of $\mathcal{C}$ there exists an integer $n_f\in\mathbb{Z}_{\geq2}$ dividing $f$.
\end{corollary}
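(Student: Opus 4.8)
The plan is to recognize Corollary \ref{cortooo} as nothing more than the contrapositive of conclusion (c) of Theorem \ref{newprop}, so that no new number-theoretic or categorical input is required beyond what has already been established. The entire argument is a short proof by contradiction that simply re-packages the quantifiers.

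First I would suppose, toward a contradiction, that the Grothendieck ring $K(\mathcal{C})$ is noncommutative and yet the asserted divisibility fails for some formal codegree; that is, there is a formal codegree $f$ of $\mathcal{C}$ such that no integer $m\in\mathbb{Z}_{\geq2}$ divides $f$. This is verbatim the hypothesis of Theorem \ref{newprop}, with this particular $f$ serving as the distinguished indivisible formal codegree. I would then invoke part (c) of that theorem directly, which forces $K(\mathcal{C})$ to be commutative. This contradicts the standing assumption of noncommutativity, so no such $f$ can exist; equivalently, every formal codegree $f$ of $\mathcal{C}$ admits some $n_f\in\mathbb{Z}_{\geq2}$ dividing it, which is the claim.

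The only point demanding any care—and it is a purely logical one rather than a genuine obstacle—is matching the quantifier structure. Theorem \ref{newprop} is stated as an \emph{existential} hypothesis (the category possesses a single formal codegree that is not divisible by any integer $\geq 2$), whereas the corollary asserts a \emph{universal} divisibility conclusion (every formal codegree is divisible by some such integer). Negating the corollary's conclusion produces precisely the existence of one indivisible formal codegree, so the contrapositive aligns on the nose and the proof is complete in a single line once this correspondence is observed.
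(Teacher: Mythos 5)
Your proposal is correct and matches the paper's proof exactly: the paper also disposes of Corollary \ref{cortooo} in one line as the contrapositive of Theorem \ref{newprop}(c). Your careful check that negating the universal divisibility conclusion yields precisely the existential hypothesis of Theorem \ref{newprop} is exactly the right (and only) point to verify.
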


\begin{proof}
This is the contrapositive of Theorem \ref{newprop} (c).
\end{proof}

\begin{example}
By some measures, the fusion category $\mathcal{H}$ corresponding to the extended Haagerup subfactor \cite{MR2979509} is the most interesting fusion category with respect to global dimension that has been constructed at this time.  We know $|\mathcal{O}(\mathcal{H})|=8$ and it has \emph{noncommutative} fusion rules.  In particular, Corollary \ref{cortooo} implies the norm of every formal codegree of $\mathcal{H}$ must contain a prime power factor $p_j^{k_j}$ with $k_j\geq3$.  For example, $\dim(\mathcal{H})=\mathrm{FPdim}(\mathcal{H})$ is the largest root of the polynomial
\begin{equation}
x^3-585x^2+8450x-21125.
\end{equation}
Moreover $N(\dim(\mathcal{H}))=21125=5^3\cdot13^2$ as predicted.
\end{example}

Recall that if $\mathbb{K}$ is any number field, each fusion category posesses a fusion subcategory $\mathcal{C}_\mathbb{K}$ whose objects are all $X\in\mathcal{C}$ such that $\mathrm{FPdim}(X)\in\mathbb{K}$ \cite[Proposition 1.6]{gannonschopieray}.  Thus $\mathcal{C}_\mathbb{Q}$ is the largest integral fusion subcategory of $\mathcal{C}$.  Alternatively, one can consider the fusion subcategory $\mathcal{C}_\mathrm{ad}$, the trivial component of the universal grading of $\mathcal{C}$ \cite[Section 3.2]{nilgelaki}.

\begin{corollary}\label{corhalf}
Let $\mathcal{C}$ be a fusion category. If $\mathcal{C}$ has a formal codegree satisfying the equivalent conditions of  Corollary \ref{noocore}, then $\mathcal{C}_\mathbb{Q}=\mathrm{Vec}$ and $\mathcal{C}=\mathcal{C}_\mathrm{ad}$.
\end{corollary}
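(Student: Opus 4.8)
The plan is to show that the hypothesis forces $\mathrm{FPdim}(\mathcal{C})$ to be divisible by no rational integer $m\geq2$, and then to read off both conclusions from the fact that $\mathrm{FPdim}(\mathcal{C}_\mathbb{Q})$ and the order of the universal grading group are positive rational integers dividing $\mathrm{FPdim}(\mathcal{C})$.

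First I would record that $\mathrm{FPdim}(\mathcal{C})$ is itself one of the formal codegrees of $\mathcal{C}$: the Frobenius--Perron dimension is a ring homomorphism $K(\mathcal{C})\to\mathbb{C}$, and its associated formal codegree computed by (\ref{ate}) is $\sum_{X\in\mathcal{O}(\mathcal{C})}\mathrm{FPdim}(X)^2=\mathrm{FPdim}(\mathcal{C})$. By hypothesis $\mathcal{C}$ has a formal codegree $f$ satisfying the conditions of Corollary \ref{noocore}, so Theorem \ref{newprop}(a) applies and every formal codegree, in particular $\mathrm{FPdim}(\mathcal{C})$, is a Galois conjugate of $f$. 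The condition that no $m\in\mathbb{Z}_{\geq2}$ divides $f$ is stable under $\mathrm{Gal}(\overline{\mathbb{Q}}/\mathbb{Q})$---if $m\mid\sigma(f)$ then applying $\sigma^{-1}$ gives $m\mid f$---so $\mathrm{FPdim}(\mathcal{C})$ is divisible by no rational integer $m\geq2$ either.

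For the claim $\mathcal{C}_\mathbb{Q}=\mathrm{Vec}$, note that $\mathcal{C}_\mathbb{Q}$ is by construction integral, so $\mathrm{FPdim}(\mathcal{C}_\mathbb{Q})\in\mathbb{Z}_{\geq1}$; as it is a fusion subcategory, $\mathrm{FPdim}(\mathcal{C})/\mathrm{FPdim}(\mathcal{C}_\mathbb{Q})$ is an algebraic integer \cite{ENO}, i.e. the rational integer $\mathrm{FPdim}(\mathcal{C}_\mathbb{Q})$ divides $\mathrm{FPdim}(\mathcal{C})$. By the previous step this forces $\mathrm{FPdim}(\mathcal{C}_\mathbb{Q})=1$, whence $\mathcal{C}_\mathbb{Q}\simeq\mathrm{Vec}$. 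For the claim $\mathcal{C}=\mathcal{C}_\mathrm{ad}$, I would invoke the universal grading dimension formula $\mathrm{FPdim}(\mathcal{C})=|U(\mathcal{C})|\cdot\mathrm{FPdim}(\mathcal{C}_\mathrm{ad})$ from \cite{nilgelaki}, exhibiting the positive rational integer $|U(\mathcal{C})|$ as a divisor of $\mathrm{FPdim}(\mathcal{C})$; again it must equal $1$, so the universal grading is trivial and $\mathcal{C}=\mathcal{C}_\mathrm{ad}$.

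The argument is short once the reduction is in place; the one point requiring care is the very first step, establishing that the indivisibility hypothesis---stated for a single formal codegree $f$---transfers to $\mathrm{FPdim}(\mathcal{C})$. This is exactly where Theorem \ref{newprop}(a) and the Galois-stability of rational-integer divisibility do the work. The remaining ingredients are standard: that the Frobenius--Perron dimension of a fusion subcategory divides that of the ambient category, and the multiplicativity of $\mathrm{FPdim}$ across the universal grading, both of which I would cite rather than reprove.
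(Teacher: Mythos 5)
Your proof is correct and follows essentially the same route as the paper: both arguments reduce everything to the indivisibility of $\mathrm{FPdim}(\mathcal{C})$ and then apply \cite[Proposition 8.15]{ENO} for $\mathcal{C}_\mathbb{Q}$ and the grading formula $\mathrm{FPdim}(\mathcal{C})=|U(\mathcal{C})|\,\mathrm{FPdim}(\mathcal{C}_\mathrm{ad})$ for $\mathcal{C}_\mathrm{ad}$. In fact you are slightly more careful than the paper, which asserts the indivisibility of $\mathrm{FPdim}(\mathcal{C})$ ``by assumption,'' whereas you justify the transfer from the given formal codegree $f$ via Theorem \ref{newprop}(a) together with the Galois-stability of rational-integer divisibility.
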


\begin{proof}
As $\mathrm{FPdim}(\mathcal{C})$ is a formal codegree, it cannot be divisible by any integer $m\in\mathbb{Z}_{\geq2}$ by assumption.  However, $\mathrm{FPdim}(\mathcal{C}_\mathbb{Q})\in\mathbb{Z}$ and divides $\mathrm{FPdim}(\mathcal{C})$ \cite[Proposition 8.15]{ENO}, hence it must be trivial.  Similarly, if $U(\mathcal{C})$ is the universal grading group of $\mathcal{C}$, $\mathrm{FPdim}(\mathcal{C})=|U(\mathcal{C})|\mathrm{FPdim}(\mathcal{C}_\mathrm{ad})$ \cite[Theorem 3.5.2]{tcat}.  Thus $|U(\mathcal{C})|=1$ and $\mathcal{C}=\mathcal{C}_\mathrm{ad}$.
\end{proof}

\begin{lemma}\label{thelemma}
Let $\mathcal{C}$ be a fusion category.  If $\sigma\in\mathrm{Gal}(\mathbb{Q}(\dim(\mathcal{C}))/\mathbb{Q})$ is nontrivial, then $\mathcal{C}\boxtimes\mathcal{C}^\sigma$ is not Galois conjugate to a pseudounitary fusion category.
\end{lemma}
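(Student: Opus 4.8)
The plan is to compare the Frobenius-Perron and global dimensions of $\mathcal{D}:=\mathcal{C}\boxtimes\mathcal{C}^\sigma$, exploiting that the former is insensitive to Galois conjugation while the latter transforms by $\sigma$. Writing $g:=\dim(\mathcal{C})$ and $F:=\mathrm{FPdim}(\mathcal{C})$, multiplicativity of both dimensions under $\boxtimes$ together with $\dim(\mathcal{C}^\sigma)=\sigma(g)$ and $\mathrm{FPdim}(\mathcal{C}^\sigma)=F$ gives $\dim(\mathcal{D})=g\,\sigma(g)$ and $\mathrm{FPdim}(\mathcal{D})=F^2$.

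Next I would argue by contradiction. Suppose $\mathcal{D}\simeq\mathcal{E}^\tau$ for some pseudounitary fusion category $\mathcal{E}$ and some $\tau\in\mathrm{Gal}(\overline{\mathbb{Q}}/\mathbb{Q})$. Since Frobenius-Perron dimension is unchanged by Galois conjugation, $\mathrm{FPdim}(\mathcal{E})=\mathrm{FPdim}(\mathcal{D})=F^2$, and pseudounitarity of $\mathcal{E}$ gives $\dim(\mathcal{E})=\mathrm{FPdim}(\mathcal{E})=F^2$. On the other hand $\dim(\mathcal{D})=\dim(\mathcal{E}^\tau)=\tau(\dim(\mathcal{E}))=\tau(F^2)$. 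Combining with the computation above yields the single key identity $g\,\sigma(g)=\tau(F^2)$.

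The decisive step is to apply $\tau^{-1}$ to this identity, obtaining $\tau^{-1}(g)\,\tau^{-1}(\sigma(g))=F^2$. Because $\mathbb{Q}(g)/\mathbb{Q}$ is Galois, the restriction of $\tau^{-1}$ to $\mathbb{Q}(g)$ is some $\rho\in\mathrm{Gal}(\mathbb{Q}(g)/\mathbb{Q})$, so the two factors equal $\dim(\mathcal{C}^\rho)$ and $\dim(\mathcal{C}^{\rho\sigma})$ respectively; in particular each is the global dimension of a genuine fusion category, hence a positive real number. I would then invoke the universal bound $\dim(\mathcal{A})\leq\mathrm{FPdim}(\mathcal{A})$ valid for every fusion category $\mathcal{A}$ \cite{ENO}: applied to $\mathcal{C}^\rho$ and $\mathcal{C}^{\rho\sigma}$ it shows both factors are at most $\mathrm{FPdim}(\mathcal{C})=F$. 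Two positive reals each at most $F$ whose product equals $F^2$ must both equal $F$, so $\dim(\mathcal{C}^\rho)=\dim(\mathcal{C}^{\rho\sigma})=F$; in particular $\rho(g)=\rho\sigma(g)$, whence $g=\sigma(g)$. This contradicts the nontriviality of $\sigma$ on $\mathbb{Q}(\dim(\mathcal{C}))$ and completes the proof.

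I expect the only real subtlety to be bookkeeping with the Galois actions, specifically recognizing that every conjugate of $\dim(\mathcal{C})$ reappears as the positive real global dimension of a conjugate category, which is exactly what turns the elementary inequality ``$ab=F^2$ with $a,b\leq F$'' into the equalities $a=b=F$. The categorical input is otherwise light: only the multiplicativity of the two dimensions under $\boxtimes$, their behaviour under Galois conjugation, the definition of pseudounitarity, and the inequality $\dim\leq\mathrm{FPdim}$.
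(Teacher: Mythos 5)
Your proof is correct and takes essentially the same approach as the paper's: both arguments reduce to the observation that every Galois conjugate of $\dim(\mathcal{C}\boxtimes\mathcal{C}^\sigma)$ is a product of two \emph{distinct} Galois conjugates of $\dim(\mathcal{C})$, each bounded above by $\mathrm{FPdim}(\mathcal{C})$ via \cite[Proposition 8.22]{ENO}, hence strictly smaller than $\mathrm{FPdim}(\mathcal{C})^2=\mathrm{FPdim}(\mathcal{C}\boxtimes\mathcal{C}^\sigma)$. The paper states this as a direct strict inequality for every $\tau\in\mathrm{Gal}(\overline{\mathbb{Q}}/\mathbb{Q})$, whereas you package the same computation as a contradiction (forcing the two conjugates to coincide with $\mathrm{FPdim}(\mathcal{C})$ and hence with each other); the ingredients are identical.
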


\begin{proof}
We have $\mathrm{FPdim}(\mathcal{C}\boxtimes\mathcal{C}^\sigma)=\mathrm{FPdim}(\mathcal{C})^2$.  Let $s_1,\ldots,s_k$ be the distinct Galois conjugates of $\dim(\mathcal{C})$. If $\tau\in\mathrm{Gal}(\overline{\mathbb{Q}}/\mathbb{Q})$ is any Galois automorphism, then for some indices $1\leq i,j\leq k$,
\begin{equation}
\tau(\dim(\mathcal{C}\boxtimes\mathcal{C}^\sigma))=\tau(\dim(\mathcal{C}))\tau(\sigma(\dim(\mathcal{C})))=s_is_j.
\end{equation}
As $\dim(\mathcal{C})\neq\sigma(\dim(\mathcal{C}))$ by assumption, $s_i\neq s_j$.  By \cite[Proposition 8.22]{ENO}, we have $s_i\leq\mathrm{FPdim}(\mathcal{C})$ for all $1\leq i\leq k$. Thus $\tau(\dim(\mathcal{C}\boxtimes\mathcal{C}^\sigma))=s_is_j<\mathrm{FPdim}(\mathcal{C}\boxtimes\mathcal{C}^\sigma)$ which is to say $\mathcal{C}\boxtimes\mathcal{C}^\sigma$ is not Galois conjugate to a pseudounitary fusion category.
\end{proof}


\subsection{Results on spherical fusion categories}

\begin{lemma}\label{cor1}
Let $\mathcal{C}$ be a spherical fusion category. If $\mathcal{C}$ has a formal codegree satisfying either of the equivalent conditions of Corollary \ref{noocore}, then $\mathcal{C}$ is Galois conjugate to a pseudounitary fusion category.
\end{lemma}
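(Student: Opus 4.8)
The plan is to exploit the fact that a spherical fusion category carries \emph{two} distinguished one-dimensional representations of its Grothendieck ring---the Frobenius-Perron dimension $\mathrm{FPdim}$ and the categorical dimension $\dim$---and that each produces a formal codegree via formula (\ref{ate}). Indeed $f_{\mathrm{FPdim}}=\sum_{X}\mathrm{FPdim}(X)\mathrm{FPdim}(X^\ast)=\mathrm{FPdim}(\mathcal{C})$, and in the spherical case $\dim(X)=\dim(X^\ast)$, so $f_{\dim}=\sum_X\dim(X)\dim(X^\ast)=\dim(\mathcal{C})$. The hypothesis supplies a formal codegree $f$ of $\mathcal{C}$ not divisible by any $m\in\mathbb{Z}_{\geq2}$ (condition (b) of Corollary \ref{noocore}), which is precisely the hypothesis of Theorem \ref{newprop}.

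First I would invoke Theorem \ref{newprop}(a): under this hypothesis every formal codegree of $\mathcal{C}$ is Galois conjugate to $f$, so all formal codegrees of $\mathcal{C}$ lie in a single Galois orbit. Applying this to the two codegrees identified above, both $\mathrm{FPdim}(\mathcal{C})$ and $\dim(\mathcal{C})$ are Galois conjugate to $f$, hence Galois conjugate to each other. Therefore there exists $\tau\in\mathrm{Gal}(\overline{\mathbb{Q}}/\mathbb{Q})$ with $\tau(\dim(\mathcal{C}))=\mathrm{FPdim}(\mathcal{C})$.

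To finish I would unwind the behavior of the two dimensions under Galois conjugation recorded in Section \ref{dimsec}: $\dim(\mathcal{C}^\tau)=\tau(\dim(\mathcal{C}))$, while $\mathrm{FPdim}(\mathcal{C}^\tau)=\mathrm{FPdim}(\mathcal{C})$ since the underlying Grothendieck ring is unchanged by $\tau$. Combining these with the equality from the previous paragraph yields $\dim(\mathcal{C}^\tau)=\mathrm{FPdim}(\mathcal{C}^\tau)$, which is exactly the statement that $\mathcal{C}^\tau$ is pseudounitary. Hence $\mathcal{C}$ is Galois conjugate to a pseudounitary fusion category, as claimed.

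There is essentially no computation here; the only points requiring care---the mild ``obstacle''---are verifying that $\dim(\mathcal{C})$ genuinely occurs as a formal codegree of $\mathcal{C}$ (so that Theorem \ref{newprop}(a) may be applied to it) and then selecting the correct Galois twist, keeping straight that $\mathrm{FPdim}$ is Galois-invariant whereas $\dim$ is Galois-equivariant.
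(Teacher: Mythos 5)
Your proposal is correct and follows the paper's own argument: the paper likewise notes that sphericality makes $\dim(\mathcal{C})$ a formal codegree, applies Theorem \ref{newprop} to place it in the Galois orbit of $\mathrm{FPdim}(\mathcal{C})$, and concludes pseudounitarity of the appropriate Galois twist. Your write-up simply makes explicit the final step (choosing $\tau$ and using $\dim(\mathcal{C}^\tau)=\tau(\dim(\mathcal{C}))$ versus $\mathrm{FPdim}(\mathcal{C}^\tau)=\mathrm{FPdim}(\mathcal{C})$) that the paper leaves implicit.
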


\begin{proof}
When $\mathcal{C}$ is spherical, $\dim(\mathcal{C})$ is a formal codegree of $\mathcal{C}$ and Theorem \ref{newprop} implies it lies in the Galois orbit of $\mathrm{FPdim}(\mathcal{C})$.
\end{proof}

\begin{proposition}\label{bigone}
Let $\mathcal{C}$ be a spherical fusion category, and $f$ be a formal codegree of $\mathcal{C}$ whose norm has prime factorization $N(f)=\prod_{j\in J}p_j^{k_j}$.  There exists $j\in J$ such that $k_j\geq\frac{1}{2}d_f$.
\end{proposition}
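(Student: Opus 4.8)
The plan is to reduce the statement to a divisibility property of $f^2$ and then play two opposing estimates for reciprocal power sums of the Galois conjugates of $f$ against each other. First I would dispose of the trivial case $f=1$ (where $N(f)=1$ and there is nothing to prove) and hence assume $f\neq1$; since every Galois conjugate of a formal codegree is at least $1$ \cite[Remark 2.12]{ost15} and a conjugate equal to $1$ would force $f=1$, all conjugates of $f$ are then strictly greater than $1$. The key preliminary observation is that $d_{f^2}=d_f$: the Galois conjugates of the totally positive $f$ are positive reals, squaring is injective on the positive reals, so $\sigma\mapsto\sigma(f)^2$ identifies the distinct conjugates of $f$ with those of $f^2$ (in particular $\mathbb{Q}(f^2)=\mathbb{Q}(f)$). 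Consequently, by Corollary \ref{noocore} applied to the cyclotomic $d$-number $f^2$, whose norm is $N(f^2)=\prod_{j\in J}p_j^{2k_j}$, the desired conclusion ``some $k_j\geq\tfrac12 d_f$'' is exactly equivalent to ``$f^2$ is divisible by some rational integer $m\geq2$'': a prime exponent $2k_j$ reaches the degree $d_{f^2}=d_f$ precisely when $k_j\geq\tfrac12 d_f$.

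So suppose, for contradiction, that $k_j<\tfrac12 d_f$ for every $j\in J$. Then every exponent $2k_j$ in $N(f^2)$ is strictly less than $d_{f^2}$, i.e. $f^2$ satisfies condition (a) of Corollary \ref{noocore}. Now I would run the purely number-theoretic computation from the proof of Theorem \ref{newprop}, but applied to the $d$-number $f^2$ in place of $f$: writing the minimal polynomial of $f^2$ as $x^m-b_1x^{m-1}+\cdots+(-1)^mb_m$ with $m=d_{f^2}=d_f$, the $d$-number divisibility $b_m^{m-1}\mid b_{m-1}^m$ underlying Equation (\ref{ddd}), together with the strict bound on the prime exponents of $b_m=N(f^2)$, forces $b_{m-1}\geq b_m$, that is
\begin{equation*}
\sum_{\sigma\in\mathrm{Gal}(\mathbb{Q}(f)/\mathbb{Q})}\frac{1}{\sigma(f)^2}=\frac{b_{m-1}}{b_m}\geq1.
\end{equation*}

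To reach a contradiction I would pair this with the opposite bound coming from $f$ being a genuine formal codegree. The identity (\ref{twelve}), namely $\sum_{\varphi}\dim(\varphi)/f_\varphi=1$, together with the Galois-stability of the multiset of formal codegrees (each conjugate $\sigma(f)$ occurs as $f_{\sigma(\varphi)}$ with $\dim(\sigma(\varphi))=\dim(\varphi)\geq1$) and the positivity of every term, yields $\sum_{\sigma}\sigma(f)^{-1}\leq1$. Since $\sigma(f)\geq1$ implies $\sigma(f)^{-2}\leq\sigma(f)^{-1}$ termwise, I obtain $1\leq\sum_{\sigma}\sigma(f)^{-2}\leq\sum_{\sigma}\sigma(f)^{-1}\leq1$, so equality holds throughout; the termwise inequality is then an equality for every $\sigma$, forcing $\sigma(f)=1$ for all $\sigma$ and hence $f=1$, contradicting $f\neq1$. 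I expect the main obstacle to be the simultaneous justification of these two reciprocal-sum bounds: the lower bound $\sum_{\sigma}\sigma(f)^{-2}\geq1$ requires invoking the Theorem \ref{newprop} divisibility argument for $f^2$, which is only a $d$-number and need not itself be a formal codegree, while the upper bound $\sum_{\sigma}\sigma(f)^{-1}\leq1$ is precisely the formal-codegree input (and essentially the only place a hypothesis on $\mathcal{C}$ is used).
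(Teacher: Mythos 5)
Your proof is correct, and it takes a genuinely different route from the paper's. The paper argues categorically: assuming $k_j<\frac{1}{2}d_f$ for all $j$, Theorem \ref{newprop} applies to $f$ itself, making all formal codegrees of $\mathcal{C}$ Galois conjugate to $f$; then for a nontrivial $\sigma\in\mathrm{Gal}(\mathbb{Q}(\dim(\mathcal{C}))/\mathbb{Q})$ the formal codegree $\mathrm{FPdim}(\mathcal{C})^2$ of the Deligne product $\mathcal{C}\boxtimes\mathcal{C}^\sigma$ satisfies the conditions of Corollary \ref{noocore}, so Lemma \ref{cor1} forces that product to be Galois conjugate to a pseudounitary category, contradicting Lemma \ref{thelemma}. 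You square the number instead of the category: you rerun the arithmetic core of the proof of Theorem \ref{newprop} on the $d$-number $f^2$ (correctly observing that the theorem cannot be quoted as a black box, since $f^2$ need not be a formal codegree of anything) to obtain $\sum_\sigma\sigma(f)^{-2}\geq1$, and play it against $\sum_\sigma\sigma(f)^{-1}\leq1$, which follows from Equation (\ref{twelve}) plus Galois stability of the codegree multiset --- exactly the input the paper itself uses to force equality in (\ref{theeq}). The termwise bound $\sigma(f)^{-2}\leq\sigma(f)^{-1}$ then squeezes everything to equality and forces $f=1$, a contradiction. Your supporting steps all check: $d_{f^2}=d_f$ and $N(f^2)=N(f)^2$ because squaring is injective on the totally positive conjugates, and $f^2$ is a cyclotomic $d$-number since $\sigma(f^2)/f^2=(\sigma(f)/f)^2$. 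What your route buys is substantial: sphericality is never invoked, so your argument proves the proposition for \emph{arbitrary} fusion categories (and hence Corollary \ref{two} also holds without the spherical hypothesis), whereas the paper needs sphericality in both Lemma \ref{cor1} and Lemma \ref{thelemma}; what the paper's route buys is brevity given its existing infrastructure and a showcase of the Deligne-product technique, which is really the categorical avatar of your $f^2$. One shared blemish: for $f=1$ (i.e.\ $\mathcal{C}\simeq\mathrm{Vec}$) the statement is vacuously false rather than trivially true, since $N(f)=1$ has empty prime factorization; your ``nothing to prove'' glosses over this, but the paper's proof has the same blind spot (no nontrivial $\sigma$ exists there), so it is a defect of the statement, not of your argument.
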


\begin{proof}
If not, $k_j<\frac{1}{2}d_f$ for all $j\in J$ and thus all formal codegrees of $\mathcal{C}$ are Galois conjugate, and have the same norm by Theorem \ref{newprop}.  Therefore, for any nontrivial $\sigma\in\mathrm{Gal}(\mathbb{Q}(\dim(\mathcal{C}))/\mathbb{Q})$, $\mathrm{FPdim}(\mathcal{C})^2$ is a formal codegree of $\mathcal{C}\boxtimes\mathcal{C}^\sigma$ which satisfies the equivalent conditions of Corollary \ref{noocore}.  Moreover Lemma \ref{cor1} implies $\mathcal{C}\boxtimes\mathcal{C}^\sigma$ is pseudounitary, contradicting Lemma \ref{thelemma}.
\end{proof}

\begin{corollary}\label{two}
Let $\mathcal{C}$ be a spherical fusion category.  If $\mathcal{C}$ has a formal codegree $f$ with square-free norm, then $d_f=[\mathbb{Q}(f):\mathbb{Q}]$ is $1$ or $2$.
\end{corollary}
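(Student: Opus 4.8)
The plan is to read off the statement as an immediate consequence of Proposition \ref{bigone}. First I would recall that saying $N(f)$ is square-free means precisely that in the prime factorization $N(f)=\prod_{j\in J}p_j^{k_j}$ every exponent satisfies $k_j=1$, and in particular $k_j\leq 1$ for all $j\in J$. Proposition \ref{bigone} supplies an index $j\in J$ with $k_j\geq\tfrac{1}{2}d_f$. Combining these two facts gives $1\geq\tfrac{1}{2}d_f$, hence $d_f\leq 2$; since $d_f=[\mathbb{Q}(f):\mathbb{Q}]\geq 1$ always, one concludes $d_f\in\{1,2\}$.

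The only point requiring a moment's care is the degenerate case $N(f)=1$, where the index set $J$ is empty and Proposition \ref{bigone} has no content. But a formal codegree $f$ with $N(f)=1$ is a totally positive algebraic unit all of whose conjugates are at least $1$, which forces $f=1$ and hence $d_f=1$, consistent with the claim. (As recorded earlier in the paper, such a codegree occurs only for $\mathcal{C}\simeq\mathrm{Vec}$.) Thus in every case $d_f\in\{1,2\}$.

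I do not anticipate any genuine obstacle here: all of the substantive work---the interplay between sphericality, Lemma \ref{thelemma}, Lemma \ref{cor1}, and Theorem \ref{newprop}---has already been carried out inside the proof of Proposition \ref{bigone}. The corollary reduces to the elementary arithmetic observation that a square-free integer cannot have a prime factor appearing to a power as large as $\tfrac{1}{2}d_f$ unless $d_f\leq 2$, so the argument is essentially a single substitution into the conclusion of Proposition \ref{bigone}.
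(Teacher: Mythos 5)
Your proof is correct and is exactly the argument the paper intends: the corollary is stated immediately after Proposition \ref{bigone} with no separate proof, since square-freeness forces every $k_j\leq 1$ and hence $\tfrac{1}{2}d_f\leq 1$. Your extra remark handling the degenerate case $N(f)=1$ (where $J$ is empty and $f=1$, so $d_f=1$) is a nice touch the paper leaves implicit, but it does not change the route.
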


\begin{theorem}\label{prop6}
Let $\mathcal{C}$ be a spherical fusion category with a formal codegree $f$ with square-free norm.  If $f\not\in\mathbb{Z}$, then $f=(1/2)(5\pm\sqrt{5})$ and $\mathcal{C}$ is equivalent to $\mathrm{Fib}$ or $\mathrm{Fib}^\sigma$ as a spherical fusion category.
\end{theorem}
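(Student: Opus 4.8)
The plan is to combine the numerical constraint from Corollary~\ref{two} with a classification of small totally positive cyclotomic $d$-numbers. By Corollary~\ref{two}, since $f$ has square-free norm, $d_f \in \{1,2\}$; the hypothesis $f \notin \mathbb{Z}$ forces $d_f = 2$. So $f$ is a totally positive quadratic cyclotomic integer, and I would write its minimal polynomial as $x^2 - a_1 x + a_2$ with $a_1, a_2 \in \mathbb{Z}_{\geq 1}$ (positivity of the coefficients comes from total positivity, via the trace/norm-to-coefficient dictionary in the preliminaries). Since the norm $a_2 = N(f)$ is square-free and $f$ is a $d$-number, the $d$-number condition (Equation~\ref{ddd}, i.e. $a_2^{\,1}$ divides $a_1^{\,2}$) plus square-freeness forces $a_2 \mid a_1$; combined with Ostrik's constraint that the Galois conjugates of a formal codegree are all $\geq 1$, this pins down $a_1$ and $a_2$ to very few possibilities.

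The arithmetic heart is then to solve for the admissible pairs $(a_1, a_2)$. Both roots of $x^2 - a_1 x + a_2$ must be real and $\geq 1$ (formal codegree condition), so I would impose: discriminant $a_1^2 - 4a_2 > 0$ (strict, since $f \notin \mathbb{Z}$ means the roots are distinct and irrational), and the smaller root $\tfrac{1}{2}(a_1 - \sqrt{a_1^2 - 4a_2}) \geq 1$, which rearranges to $a_2 \geq a_1 - 1$. Together with $a_2 \mid a_1$ (from the $d$-number/square-free analysis), the only surviving solution should be $a_1 = 5$, $a_2 = 5$, giving $f = \tfrac{1}{2}(5 \pm \sqrt{5})$ as claimed, with the conjugate pair $\tfrac{1}{2}(5-\sqrt5)\approx 1.38$ indeed exceeding $1$. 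I expect the cyclotomic requirement ($\mathbb{Q}(f)$ real quadratic inside some $\mathbb{Q}(\zeta_n)$, hence $\mathbb{Q}(\sqrt{a_1^2-4a_2})=\mathbb{Q}(\sqrt5)$) to be the clean way to eliminate stray candidates, since it restricts the radicand to square-free parts supported on ramified primes of cyclotomic fields.

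Once $f = \tfrac{1}{2}(5 \pm \sqrt{5})$ is established, the remaining task is the categorical identification: showing $\mathcal{C} \simeq \mathrm{Fib}$ or $\mathrm{Fib}^\sigma$ as a spherical fusion category. Here I would invoke Theorem~\ref{newprop}: because $N(f)=5$ is prime and $d_f=2$, the condition $0 \le k_j < d_f$ of Corollary~\ref{noocore} holds ($k=1<2$), so $f$ is not divisible by any rational integer $m \geq 2$. Hence all formal codegrees of $\mathcal{C}$ are Galois conjugate to $f$, so by Theorem~\ref{newprop}(b) we get $|\mathcal{O}(\mathcal{C})| = d_f = 2$, and $\mathcal{C}$ is a rank-$2$ fusion category with commutative Grothendieck ring. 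The rank-$2$ classification of Ostrik~\cite{ostrik} then leaves only $\mathrm{Fib}$ and its Galois conjugate $\mathrm{Fib}^\sigma$ (the pointed rank-$2$ case $\mathrm{Vec}_{\mathbb{Z}/2\mathbb{Z}}$ is excluded since it has rational integer formal codegree $2$, contradicting $f \notin \mathbb{Z}$), and one checks directly that these realize $\dim(\mathcal{C})=\tfrac{1}{2}(5\pm\sqrt5)$.

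\textbf{Main obstacle.} The main obstacle is the arithmetic step: nailing down that $(a_1,a_2)=(5,5)$ is the unique admissible pair, rather than handling it by brute inspection. The delicate point is correctly marshaling the three constraints simultaneously—total positivity with all conjugates $\geq 1$, square-freeness of the norm, and the $d$-number divisibility—and ensuring that the cyclotomic hypothesis is actually used to exclude quadratic fields $\mathbb{Q}(\sqrt{D})$ whose $D$ is not the radicand of a cyclotomic field of the relevant conductor. The categorical step is comparatively routine given the machinery already assembled.
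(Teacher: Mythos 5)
Your reduction to $d_f=2$ via Corollary~\ref{two} and your categorical endgame match the paper's strategy, but the arithmetic step in the middle has a genuine gap. The constraints you impose --- total positivity with all conjugates $\geq 1$, square-free norm $a_2$, the $d$-number divisibility $a_2\mid a_1^2$, and cyclotomicity of $\mathbb{Q}(f)$ --- do \emph{not} single out $(a_1,a_2)=(5,5)$. They do correctly force $a_1=a_2=N$ (your inequality $a_2\geq a_1-1$ together with $a_2\mid a_1$ accomplishes this, which is a valid alternative to the paper's use of Equation~(\ref{theeq})), but then \emph{every} square-free $N\geq5$ survives: the smaller root of $x^2-Nx+N$ is approximately $1+1/N$, hence exceeds $1$ for all such $N$. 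Concretely, $3\pm\sqrt{3}$ (minimal polynomial $x^2-6x+6$) is a totally positive cyclotomic $d$-number lying in $\mathbb{Q}(\zeta_{12})$, with square-free norm $6$ and both conjugates greater than $1$, so it passes every test you list. Your hoped-for rescue via the cyclotomic hypothesis cannot work: by the Kronecker--Weber theorem every real quadratic field embeds in a cyclotomic field, so cyclotomicity eliminates no quadratic candidates whatsoever. The missing ingredient is categorical, not arithmetic: by Theorem~\ref{newprop} (applicable since square-freeness and $d_f=2$ give condition (a) of Corollary~\ref{noocore}), all formal codegrees of $\mathcal{C}$ are Galois conjugate to $f$, so $f$ is a Galois conjugate of $\dim(\mathcal{C})$; Ostrik's bound \cite[Theorem 4.1.1(i)]{ostrikremarks} that every nontrivial spherical fusion category --- hence every Galois conjugate $\mathcal{C}^\sigma$ --- has global dimension strictly greater than $4/3$ then forces the smaller root above $4/3$, i.e. $N<16/3$, whence $N=5$. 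A bound of $1$ is simply too weak; the paper needs $4/3$.

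The good news is that your final paragraph already contains a complete repair, provided you run it \emph{before} the arithmetic rather than after it. You invoke Theorem~\ref{newprop} using ``$N(f)=5$ prime and $d_f=2$,'' but the hypothesis of Corollary~\ref{noocore} needs only square-freeness and $d_f=2$ (all $k_j\leq1<2$), not the value of $N(f)$. So Theorem~\ref{newprop} directly yields $|\mathcal{O}(\mathcal{C})|=d_f=2$ with commutative Grothendieck ring; Ostrik's rank-$2$ classification \cite{ostrik} then leaves the pointed case (excluded, since its formal codegrees equal $2\in\mathbb{Z}$) or $\mathrm{Fib}$, $\mathrm{Fib}^\sigma$, whose formal codegrees are exactly $(1/2)(5\pm\sqrt{5})$. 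This gives both conclusions of the theorem at once and bypasses pinning down $N$ entirely --- a route genuinely different from the paper's, which fixes $N=5$ first via the $4/3$ bound and then cites the classification of spherical fusion categories of dimension $(1/2)(5\pm\sqrt{5})$ in \cite[Example 5.1.2(iv)]{ostrikremarks}. As written, however, your proof reaches that final step only through the faulty arithmetic claim, so it does not stand without this reordering.
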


\begin{proof}
Corollary \ref{two} implies $d_f=2$, hence all formal codegrees are Galois conjugate, i.e. \!$f$ is a Galois conjugate of $\dim(\mathcal{C})$.  Let $N:=N(f)$.   Then $f$ is a root of $x^2-Nx+N$ (see Equation (\ref{theeq})), hence $f=(1/2)(N\pm\sqrt{N^2-4N})$ which both must be greater than $4/3$ by \cite[Theorem 4.1.1]{ostrikremarks}.  This condition on the lesser of the two implies
\begin{align}
&&(1/2)(N-\sqrt{N^2-4N})&>4/3 \\
\Rightarrow&&(N-8/3)^2&>N^2-4N \\
\Rightarrow&&16/3&>N.
\end{align}
Thus $N=5$ because $N^2-4N<0$ when $N=1,2,3$, and $f\in\mathbb{Z}$ when $N=4$.  All spherical fusion categories of $\dim(\mathcal{C})=(1/2)(5\pm\sqrt{5})$ were classified in \cite[Example 5.1.2(iv)]{ostrikremarks} and shown to be equivalent to $\mathrm{Fib}$ or $\mathrm{Fib}^\sigma$.
\end{proof}


\section{Norm of global dimension is prime}\label{globp}

Here we prove that aside from $\mathrm{Fib}$, $\mathrm{Fib}^\sigma$, and $\overline{\mathrm{Fib}}$, all spherical braided fusion categories whose global dimension has prime norm $p\in\mathbb{Z}_{\geq2}$ are pointed.  By Theorem \ref{prop6} this reduces to a classification of spherical braided fusion categories global dimension exactly $p$.

\subsection{Results on spherical fusion categories}

\begin{lemma}\label{plemma}
Let $\mathcal{C}$ be a spherical fusion category.  If $\dim(\mathcal{C})=p\in\mathbb{Z}_{\geq2}$ is prime, then for all formal codegrees $f$ of $\mathcal{C}$, $f=p\cdot u_f$ for some $u_f\in\mathcal{O}_{\mathbb{Q}(f)}^\times$.
\end{lemma}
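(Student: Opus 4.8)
The plan is to show that the statement is equivalent to the single numerical claim $N(f)=p^{d_f}$ for every formal codegree $f$, and then to extract that equality from Theorem~\ref{newprop} by exploiting the primality of $p$. First I would record that each formal codegree $f$ divides $\dim(\mathcal{C})=p$, so writing $p=f\gamma$ with $\gamma\in\mathcal{O}_{\mathbb{Q}(f)}$ and applying $N_{\mathbb{Q}(f)}$ gives $p^{d_f}=N(f)N(\gamma)$; hence $N(f)=p^k$ for some $0\le k\le d_f$. If I can force $k=d_f$, then the $d$-number identity (\ref{ddd}) yields $f^{d_f}=N(f)\cdot u=p^{d_f}u$ for some $u\in\mathcal{O}_{\mathbb{Q}(f)}^\times$, so $(f/p)^{d_f}=u$; since $\mathbb{A}$ is integrally closed, both $f/p$ and its inverse are algebraic integers, whence $u_f:=f/p$ is a unit and $f=p\cdot u_f$, as desired.

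The heart of the argument is therefore to rule out $k<d_f$. Here the hypothesis that $p$ is prime is essential: because $N(f)$ is a power of the single prime $p$, the condition $k<d_f$ is exactly condition (a) of Corollary~\ref{noocore}, hence equivalent to $f$ being divisible by no integer $m\in\mathbb{Z}_{\ge 2}$. Assuming this, Theorem~\ref{newprop}(a) applies and forces every formal codegree of $\mathcal{C}$ to lie in the Galois orbit of $f$. The contradiction comes from sphericality: categorical dimension is a ring homomorphism, so $\dim(\mathcal{C})=\sum_{X}\dim(X)^2=p$ is itself a formal codegree of $\mathcal{C}$. A rational codegree can be Galois conjugate to $f$ only if $f$ is rational, i.e. $f=p$ and $d_f=1$; but then $k=1=d_f$, contradicting $k<d_f$. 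Hence $k=d_f$ for every $f$, and the reduction above finishes the proof.

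I expect the main obstacle to be conceptual rather than computational: recognizing that ``$N(f)$ is a proper prime power'' is precisely the trigger for the rigid single-orbit dichotomy of Theorem~\ref{newprop}, and that the rational formal codegree $p$ (available only because $\mathcal{C}$ is spherical) is the object that collides with that forced conclusion. One should also dispatch the degenerate possibility $k=0$: a totally positive codegree with all conjugates $\ge 1$ and $N(f)=1$ must equal $1$, which occurs only for $\mathrm{Vec}$ and is excluded by $\dim(\mathcal{C})=p\ge 2$; in any event it is subsumed by the contradiction above, since $k=0$ again forces $f$ rational. Notably, no appeal to Proposition~\ref{bigone} is needed here---the prime-power structure upgrades its weak bound $k\ge \tfrac{1}{2}d_f$ all the way to $k=d_f$.
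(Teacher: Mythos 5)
Your proof is correct, and while it shares the paper's skeleton, it resolves the crucial case by a genuinely different (and cleaner) route. Both you and the paper begin identically: $f$ divides $p$ gives $N(f)=p^k$ with $0\le k\le d_f$, and when $k=d_f$ the $d$-number identity (\ref{ddd}) produces the unit $u_f=f/p$. The divergence is in the intermediate case. The paper treats $0<k<d_f$ by invoking Lemma \ref{cor1} (so $\mathcal{C}$ is Galois conjugate to a pseudounitary category), deducing $\mathrm{FPdim}(\mathcal{C})=p$ since Frobenius--Perron dimension is Galois-invariant, and then citing the classification result \cite[Corollary 8.30]{ENO} that fusion categories of Frobenius--Perron dimension $p$ are pointed, so the lemma holds there with $u_f=1$; the case is absorbed rather than excluded. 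You instead show the case $k<d_f$ is vacuous: Corollary \ref{noocore} triggers Theorem \ref{newprop}(a), the rational formal codegree $\dim(\mathcal{C})=p$ (this is where sphericality enters, exactly as it does inside the proof of Lemma \ref{cor1}) must lie in the Galois orbit of $f$, forcing $f=p$ and $k=1=d_f$, a contradiction. Your route stays entirely within the number-theoretic framework of Section \ref{divisors}---no pseudounitarity and no appeal to the pointedness classification---and it yields the slightly stronger conclusion that $N(f)=p^{d_f}$ for every formal codegree; the paper's route is shorter on the page only because it delegates to Lemma \ref{cor1} and to \cite{ENO}. Both arguments are sound, and they agree a posteriori, since in the paper's ``pointed'' branch one in fact has $d_f=1=k$, confirming your observation that the branch never actually occurs.
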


\begin{proof}
We know $f$ divides $p$ \cite[Corollary 2.14]{ost15}, thus $N(f)=p^k$ for some $0\leq k\leq d_f$.  Clearly $k\neq0$ as $\mathcal{C}$ must be nontrivial.  But if $0<k<d_f$, then Lemma \ref{cor1} implies $\mathcal{C}$ is (Galois conjugate to) a pseudounitary fusion category.  Fusion categories of Frobenius-Perron dimension $p$ are pointed \cite[Corollary 8.30]{ENO}, hence our claim is proven with $u_f=1$.  Otherwise $N(f)=p^{d_f}$, and moreover $f^{d_f}=p^{d_f}\cdot u$ for some $u\in\mathcal{O}_{\mathbb{Q}(f)}^\times$ by Equation (\ref{ddd}) and our claim follows.
\end{proof}

\begin{proposition}\label{dfdivides}
Let $\mathcal{C}$ be a spherical fusion category of prime global dimension $p\neq2$.  For all formal codegrees $f$ of $\mathcal{C}$, $d_f$ divides $(p-1)/2$.
\end{proposition}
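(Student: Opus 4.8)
The plan is to reduce to a statement about the field $\mathbb{Q}(f)$ and then show that it embeds in the real cyclotomic field $\mathbb{Q}(\zeta_p)^+$. By Lemma \ref{plemma} we may write $f = p\,u_f$ with $u_f\in\mathcal{O}_{\mathbb{Q}(f)}^\times$, and since all Galois conjugates of $f$ are real and at least $1$, the field $\mathbb{Q}(f)=\mathbb{Q}(u_f)$ is totally real and $u_f$ is a totally positive unit (so $N(u_f)=1$) every conjugate of which is $\geq 1/p$. Granting the containment $\mathbb{Q}(f)\subseteq\mathbb{Q}(\zeta_p)^+$, the proposition follows at once: $\mathrm{Gal}(\mathbb{Q}(\zeta_p)^+/\mathbb{Q})\cong(\mathbb{Z}/p\mathbb{Z})^\times/\{\pm1\}$ is cyclic of order $(p-1)/2$, so every subfield has degree dividing $(p-1)/2$, whence $d_f=[\mathbb{Q}(f):\mathbb{Q}]\mid(p-1)/2$.

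To obtain the containment I would argue through ramification. Because $u_f$ is a unit, $(f)=(p)$ as ideals of $\mathcal{O}_{\mathbb{Q}(f)}$ and $N(f)=p^{d_f}$, so $p$ is the only rational prime dividing $N(f)$ and the whole arithmetic of $f$ is concentrated above $p$. The goal is then to show that $p$ is the unique ramified prime in $\mathbb{Q}(f)/\mathbb{Q}$ and that its ramification is tame; by the conductor--discriminant formula the conductor of this abelian extension is then exactly $p$, so Kronecker--Weber gives $\mathbb{Q}(f)\subseteq\mathbb{Q}(\zeta_p)$, and total reality upgrades this to $\mathbb{Q}(\zeta_p)^+$. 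To control ramification I would bring in the \emph{entire} collection of formal codegrees rather than $f$ alone: writing each codegree as $f_\varphi=p\,u_{f_\varphi}$ and substituting into the identity (\ref{twelve}) yields $\sum_\varphi\dim(\varphi)\,u_{f_\varphi}^{-1}=p$, a Galois-stable sum of totally positive units equal to the rational integer $p$. Combined with total positivity (each absolute trace of a $u_{f_\varphi}^{-1}$ is $\geq 1$ by the arithmetic--geometric mean inequality) and the $d$-number relation (\ref{ddd}), this should rigidify the admissible $u_f$ and is the mechanism by which the field of definition is pinned to conductor $p$.

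The hard part is precisely this ramification control. The ideal equality $(f)=(p)$ by itself does \emph{not} force $\mathbb{Q}(f)$ to be unramified away from $p$: a totally positive cyclotomic $d$-number dividing $p$ can, on the face of it, generate a field ramified at other primes, so the argument cannot be purely ideal-theoretic and must exploit genuine properties of formal codegrees. I expect the decisive input to be an absolute-trace estimate in the Cassels--Siegel tradition highlighted in the introduction, used to bound $\mathrm{Tr}(u_f^{-1})$ from above against $\sum_\varphi\dim(\varphi)\,u_{f_\varphi}^{-1}=p$ and thereby exclude units whose field ramifies outside $p$. Once tame ramification at $p$ as the sole ramified prime is established, the cyclic structure of $\mathrm{Gal}(\mathbb{Q}(\zeta_p)/\mathbb{Q})$ closes the argument as in the first paragraph.
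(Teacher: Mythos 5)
Your opening reduction is correct and coincides with the endgame of the paper's proof: once $\mathbb{Q}(f)\subseteq\mathbb{Q}(\zeta_p)^+$ is known, cyclicity of $\mathrm{Gal}(\mathbb{Q}(\zeta_p)^+/\mathbb{Q})$ gives $d_f\mid(p-1)/2$. But that containment \emph{is} the entire content of the proposition, you leave it unproven, and the mechanism you propose for it cannot work. Concretely, take $p=7$ and consider the multiset $\{7,\;7(3+2\sqrt{2}),\;7(3-2\sqrt{2})\}$. Each element is a totally positive cyclotomic $d$-number (it lies in $\mathbb{Q}(\zeta_8)^+$), all Galois conjugates exceed $1$, each divides $7$ and has the form $7u$ with $u$ a totally positive unit, and the multiset satisfies Ostrik's identity (\ref{twelve}), equivalently your sum $\sum_\varphi\dim(\varphi)u_{f_\varphi}^{-1}=p$ from (\ref{equation20}), since $\tfrac{1}{7}+\tfrac{3-2\sqrt{2}}{7}+\tfrac{3+2\sqrt{2}}{7}=1$. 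Yet here $d_f=2$ does not divide $(7-1)/2=3$, and $\mathbb{Q}(f)=\mathbb{Q}(\sqrt{2})$ is ramified at $2$. So every numerical fact you have in hand --- Lemma \ref{plemma}, total positivity, the sum identity, trace considerations --- is simultaneously consistent with a configuration violating the conclusion; no argument built solely from these can succeed. This reflects a structural obstruction: trace bounds are inequalities, and inequalities can at best bound $d_f$ linearly in $p$ (Siegel's bound applied inside the sum gives roughly $p\geq1+\tfrac{3}{2}d_f$), never produce a divisibility statement, which amounts to placing $\mathbb{Q}(f)$ inside a cyclic extension. Indeed, in the paper the Cassels--Siegel estimates appear only \emph{downstream} of this proposition (Theorem \ref{previous}, Example \ref{thurteen}), and their use there presupposes that the field is already pinned down.

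The missing idea is categorical, not arithmetic: pass to the Drinfeld center. Since $\dim(\mathcal{Z}(\mathcal{C}))=\dim(\mathcal{C})^2=p^2$, the modular data of the modular tensor category $\mathcal{Z}(\mathcal{C})$ is defined over $\mathbb{Q}(\zeta_{p^n})$ for some $n\in\mathbb{Z}_{\geq1}$ by \cite[Theorem 3.9]{paul}, and by \cite[Theorem 2.13]{ost15} the formal codegrees of $\mathcal{C}$ lie in the field generated by that modular data; total reality of formal codegrees then gives $\mathbb{Q}(f)\subset\mathbb{Q}(\zeta_{p^n})^+$. This is exactly the ramification control you were hoping for, but note it is deliberately weaker than your target: it permits wild ramification at $p$ (conductor $p^n$), so it yields only $d_f\mid p^{n-1}(p-1)/2$, and no tameness or conductor computation is attempted. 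The paper removes the $p$-part of $d_f$ by counting instead: the Galois conjugates of $f$ are all formal codegrees of $\mathcal{C}$, there are at most $|\mathcal{O}(\mathcal{C})|\leq p$ of these with equality only in the pointed case, and $\dim(\mathcal{C})=p$ is always among them, so $d_f<p-1$; combined with $d_f\mid p^{n-1}(p-1)/2$ this forces $d_f\mid(p-1)/2$. If you replace your conductor--discriminant/Kronecker--Weber step with this appeal to $\mathcal{Z}(\mathcal{C})$ and append the counting argument, your outline becomes the paper's proof.
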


\begin{proof}
The modular data of $\mathcal{Z}(\mathcal{C})$ is defined over $\mathbb{Q}(\zeta_{p^n})$ for some $n\in\mathbb{Z}_{\geq1}$ by \cite[Theorem 3.9]{paul} as $\dim(\mathcal{Z}(\mathcal{C}))=p^2$ where $\mathcal{Z}(\mathcal{C})$ is the \emph{Drinfeld center} of $\mathcal{C}$ \cite[Section 7.13]{tcat}.  The field generated by the modular data of $\mathcal{Z}(\mathcal{C})$ includes the formal codegrees of $\mathcal{C}$ by \cite[Theorem 2.13]{ost15}.  But $[\mathbb{Q}(\zeta_{p^n}):\mathbb{Q}]=p^{n-1}(p-1)$.  In particular, the formal codegrees of $\mathcal{C}$ are totally real \cite[Remark 2.12]{ost15}.  Thus $\mathbb{Q}(f)\subset\mathbb{Q}(\zeta_{p^n})^+$,  and $d_f$ divides $p^{n-1}(p-1)/2$.  But $\mathcal{C}$ has at most $|\mathcal{O}(\mathcal{C})|\leq p$ formal codegrees with equality if and only if $\mathcal{C}$ is pointed \cite[Lemma 4.2.2]{ostrikremarks}, including $\dim(\mathcal{C})=p$, hence $d_f<p-1$ and our claim follows.
\end{proof}

\begin{corollary}
Let $\mathcal{C}$ be a spherical fusion category of prime global dimension $p\in\mathbb{Z}_{\geq2}$.  Then $f\in\mathbb{Q}(\zeta_p)^+$ for all formal codegrees $f$ of $\mathcal{C}$.
\end{corollary}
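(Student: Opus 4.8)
The plan is to split on whether $p$ is odd or equal to $2$, handling the odd case with a subfield argument inside a cyclic cyclotomic extension and disposing of $p=2$ directly via the rank bound. For odd $p$ the essential input is already assembled in Proposition \ref{dfdivides}: its statement gives $d_f \mid (p-1)/2$ for every formal codegree $f$, and its proof shows more, namely that the modular data of the Drinfeld center $\mathcal{Z}(\mathcal{C})$ lives in $\mathbb{Q}(\zeta_{p^n})$ for some $n\in\mathbb{Z}_{\geq1}$, that this field contains all formal codegrees of $\mathcal{C}$, and that these are totally real, so $\mathbb{Q}(f)\subseteq\mathbb{Q}(\zeta_{p^n})^+$. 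Thus I already know both the degree of $f$ and an ambient real cyclotomic field containing it; the only remaining task is to descend from $\mathbb{Q}(\zeta_{p^n})^+$ to $\mathbb{Q}(\zeta_p)^+$.

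The descent is a statement about subfields of a cyclic extension. Since $p$ is odd, $(\mathbb{Z}/p^n\mathbb{Z})^\times\cong\mathrm{Gal}(\mathbb{Q}(\zeta_{p^n})/\mathbb{Q})$ is cyclic, and passing to the maximal totally real subfield (the fixed field of complex conjugation) yields a cyclic Galois group $\mathrm{Gal}(\mathbb{Q}(\zeta_{p^n})^+/\mathbb{Q})$ of order $p^{n-1}(p-1)/2$. In a cyclic extension the subfields are in inclusion-reversing bijection with the divisors of the group order, so there is exactly one intermediate field of each degree dividing $p^{n-1}(p-1)/2$. Now $\mathbb{Q}(\zeta_p)^+\subseteq\mathbb{Q}(\zeta_{p^n})^+$ is the unique such subfield of degree $(p-1)/2$, and it is itself cyclic over $\mathbb{Q}$, hence contains a unique subfield of each degree dividing $(p-1)/2$. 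Because $d_f\mid(p-1)/2$ by Proposition \ref{dfdivides}, the field $\mathbb{Q}(f)$ is the unique subfield of $\mathbb{Q}(\zeta_{p^n})^+$ of degree $d_f$; by uniqueness this coincides with the corresponding subfield of $\mathbb{Q}(\zeta_p)^+$. Therefore $f\in\mathbb{Q}(f)\subseteq\mathbb{Q}(\zeta_p)^+$, which is the desired conclusion for odd $p$.

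For $p=2$ the target field degenerates to $\mathbb{Q}(\zeta_2)^+=\mathbb{Q}$, so I must show every formal codegree is rational. Here $\dim(\mathcal{C})=2$ is itself rational, so $d_{\dim(\mathcal{C})}=1$ and Proposition \ref{ranknorm} forces $|\mathcal{O}(\mathcal{C})|\leq N(\dim(\mathcal{C}))=2$. A rank-$2$ spherical fusion category is either pointed or one of $\mathrm{Fib},\mathrm{Fib}^\sigma$ by \cite{ostrik}, and the latter have global dimension $(1/2)(5\pm\sqrt5)\neq2$; hence $\mathcal{C}$ is pointed with every formal codegree equal to $2\in\mathbb{Q}$, completing this case. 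I expect the main obstacle to be making the cyclic-subfield uniqueness argument fully rigorous: one must verify that the ambient field produced in the proof of Proposition \ref{dfdivides} is indeed a single $\mathbb{Q}(\zeta_{p^n})^+$ with cyclic Galois group (using that $p$ is odd so that $(\mathbb{Z}/p^n\mathbb{Z})^\times$ is cyclic), and that the unique degree-$d_f$ subfield of the large field genuinely equals the one sitting inside $\mathbb{Q}(\zeta_p)^+$ rather than merely having the same degree. Once the cyclicity and the divisor-lattice of subfields are in hand, the inclusion $\mathbb{Q}(f)\subseteq\mathbb{Q}(\zeta_p)^+$ is immediate.
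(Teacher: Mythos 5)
Your proof is correct and takes essentially the same route as the paper's: both arguments combine Proposition \ref{dfdivides} with the cyclicity of the relevant cyclotomic Galois group, so that intermediate fields are in bijection with divisors of the degree and $\mathbb{Q}(f)$, being totally real of degree dividing $(p-1)/2$, is forced into $\mathbb{Q}(\zeta_p)^+$. Your separate treatment of $p=2$ via Proposition \ref{ranknorm} and Ostrik's rank-2 classification is a point of extra care, since the paper's one-line proof tacitly assumes $p$ odd (Proposition \ref{dfdivides} excludes $p=2$, and $(\mathbb{Z}/2^n\mathbb{Z})^\times$ need not be cyclic), but it does not change the substance of the argument.
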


\begin{proof}
As $\mathbb{Q}(\zeta_{p^n})/\mathbb{Q}$ is a cyclic extension, intermediate subfields are in one-to-one correspondence with rational integer divisors of $p^{n-1}(p-1)$.  E.g. \!$\mathbb{Q}(\zeta_p)\subset\mathbb{Q}(\zeta_{p^n})$ is the unique subfield $\mathbb{K}\subset\mathbb{Q}(\zeta_{p^n})$ with $[\mathbb{K}:\mathbb{Q}]=p-1$.  Moreover $[\mathbb{Q}(f):\mathbb{Q}]$ divides $p-1$ thus $\mathbb{Q}(f)\subset\mathbb{Q}(\zeta_p)^+\subset\mathbb{Q}(\zeta_p)$ as $f$ is totally real.
\end{proof}

\begin{proposition}\label{notlapland}
Let $\mathcal{C}$ be a spherical fusion category with prime global dimension $p\in\mathbb{Z}_{\geq2}$.  Then
\begin{equation}\label{equation20}
\sum_{\varphi\in\mathrm{Irr}(K(\mathcal{C}))}\dim(\varphi)u_{f_{\bm{\varphi}}}^{-1}=p,
\end{equation}
where $u_{f_\varphi}\in\mathcal{O}_{\mathbb{Q}(\zeta_p)^+}^\times$ are the units from Lemma \ref{plemma}.
\end{proposition}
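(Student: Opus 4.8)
The plan is to start from the second identity in Equation (\ref{twelve}), namely
\begin{equation*}
\sum_{\varphi\in\mathrm{Irr}(K(\mathcal{C}))}\dim(\varphi)\dfrac{\dim(\mathcal{C})}{f_\varphi}=\dim(\mathcal{C}),
\end{equation*}
and substitute $\dim(\mathcal{C})=p$ throughout. This immediately gives $\sum_\varphi \dim(\varphi)\,(p/f_\varphi)=p$, so the whole matter reduces to identifying $p/f_\varphi$ with $u_{f_\varphi}^{-1}$ for each irreducible representation $\varphi$.

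The bridge is Lemma \ref{plemma}, which I would invoke to write each formal codegree as $f_\varphi = p\cdot u_{f_\varphi}$ with $u_{f_\varphi}\in\mathcal{O}_{\mathbb{Q}(f_\varphi)}^\times$. Dividing, $p/f_\varphi = u_{f_\varphi}^{-1}$, which is exactly the summand appearing in the desired Equation (\ref{equation20}). The only genuine content beyond this algebraic substitution is checking that the units actually live in the claimed field $\mathbb{Q}(\zeta_p)^+$ rather than merely in $\mathbb{Q}(f_\varphi)$; for this I would cite the Corollary immediately preceding this Proposition, which establishes $f_\varphi\in\mathbb{Q}(\zeta_p)^+$ for every formal codegree. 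Since $u_{f_\varphi}=f_\varphi/p$ is a ratio of elements of $\mathbb{Q}(\zeta_p)^+$, and $\mathbb{Q}(\zeta_p)^+$ is a field, we get $u_{f_\varphi}\in\mathbb{Q}(\zeta_p)^+$; combined with its being a unit in $\mathcal{O}_{\mathbb{Q}(f_\varphi)}$ and the fact that $\mathcal{O}_{\mathbb{Q}(\zeta_p)^+}\cap\mathbb{Q}(f_\varphi)=\mathcal{O}_{\mathbb{Q}(f_\varphi)}$, we conclude $u_{f_\varphi}\in\mathcal{O}_{\mathbb{Q}(\zeta_p)^+}^\times$ as stated.

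I expect no serious obstacle here: this Proposition is essentially a bookkeeping restatement of (\ref{twelve}) in the notation set up by Lemma \ref{plemma} and the preceding Corollary, and its purpose is to package the formal codegree data of $\mathcal{C}$ into a single unit equation in the ring $\mathcal{O}_{\mathbb{Q}(\zeta_p)^+}$ that will drive the subsequent classification (presumably feeding into Cassels-type absolute trace estimates and the Schur--Siegel--Smyth bounds mentioned in the introduction). The mildest point of care is confirming that Lemma \ref{plemma} applies uniformly to \emph{every} $\varphi\in\mathrm{Irr}(K(\mathcal{C}))$, including the possibly higher-dimensional irreducible representations when $K(\mathcal{C})$ is noncommutative; but Lemma \ref{plemma} is stated for all formal codegrees $f$ without any commutativity hypothesis, so the factorization $f_\varphi=p\cdot u_{f_\varphi}$ is available for each $\varphi$ regardless of $\dim(\varphi)$, and the substitution goes through verbatim.
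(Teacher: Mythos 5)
Your proposal is correct and takes essentially the same route as the paper, whose entire proof is the one-line citation of \cite[Proposition 2.10]{ost15} (i.e.\ Equation (\ref{twelve})) together with Lemma \ref{plemma}: substitute $\dim(\mathcal{C})=p$ and $f_\varphi=p\cdot u_{f_\varphi}$ into the second identity of (\ref{twelve}). Your extra verification that $u_{f_\varphi}\in\mathcal{O}_{\mathbb{Q}(\zeta_p)^+}^\times$ via the Corollary preceding Proposition \ref{notlapland} simply makes explicit a point the paper leaves implicit, and it is sound.
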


\begin{proof}
This follows from \cite[Proposition 2.10]{ost15} and Lemma \ref{plemma}.
\end{proof}

\begin{theorem}\label{previous}
Let $p=2q+1$ where $p,q\in\mathbb{Z}_{\geq2}$ are both prime.  If $\mathcal{C}$ is a spherical fusion category with $N(\dim(\mathcal{C}))=p$, then $\mathcal{C}$ is pointed or $p=5$.
\end{theorem}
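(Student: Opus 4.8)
The plan is to first reduce to the case $\dim(\mathcal{C})=p$. Since $\mathcal{C}$ is spherical, $\dim(\mathcal{C})$ is itself a formal codegree, and its norm $N(\dim(\mathcal{C}))=p$ is square-free; so Theorem \ref{prop6} leaves only two possibilities: either $\dim(\mathcal{C})\in\mathbb{Z}$, forcing $\dim(\mathcal{C})=N(\dim(\mathcal{C}))=p$, or $\dim(\mathcal{C})=(1/2)(5\pm\sqrt5)$, which occurs exactly when $p=5$. Assuming $p\neq5$ we therefore have $\dim(\mathcal{C})=p$. Now the safe-prime hypothesis enters: as $q=(p-1)/2$ is prime, Proposition \ref{dfdivides} and its corollary force every formal codegree into $\mathbb{Q}(\zeta_p)^+$, a field of degree $q$, so $d_f\in\{1,q\}$; and Lemma \ref{plemma} writes each codegree as $f=p\,u_f$ with $u_f$ a totally positive unit of norm $1$ (its conjugates being the $\sigma(f)/p\geq1/p$).

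Next I would dispose of the trivial alternative. A codegree of degree $1$ necessarily equals $p$, so if $\mathcal{C}$ has \emph{no} codegree of degree $q$, then every codegree equals $p$; in particular $\mathrm{FPdim}(\mathcal{C})=p$, whence $\mathcal{C}$ is pseudounitary of prime Frobenius--Perron dimension and so pointed by \cite[Corollary 8.30]{ENO}. Thus I may assume a codegree of degree $q$ exists. The engine of the argument is the trace of the identity in Proposition \ref{notlapland}: applying the absolute trace $\tfrac1q\mathrm{Tr}_{\mathbb{Q}(\zeta_p)^+}$ to $\sum_\varphi\dim(\varphi)u_{f_\varphi}^{-1}=p$ (the right-hand side being rational) gives
\begin{equation*}
\sum_{\varphi\in\mathrm{Irr}(K(\mathcal{C}))}\dim(\varphi)\,t\bigl(u_{f_\varphi}^{-1}\bigr)=p,
\end{equation*}
where $t(\cdot)$ denotes the absolute trace. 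Each $u_{f_\varphi}^{-1}$ is a totally positive unit, so $t(u_{f_\varphi}^{-1})\geq1$ by the arithmetic--geometric mean inequality, with equality exactly for the rational codegrees $f_\varphi=p$; and for a codegree of degree $q$ the unit $u_{f_\varphi}^{-1}$ is totally positive of degree $q\geq2$, so Siegel's classification \cite{siegel} for $\lambda=3/2$ gives $t(u_{f_\varphi}^{-1})\geq3/2$.

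The decisive extra input is a lower bound on how many codegrees equal $p$. The categorical-dimension homomorphism has codegree $\sum_X\dim(X)^2=\dim(\mathcal{C})=p$, and so does each of its Galois conjugates $\sigma(\dim)$; hence at least $[\mathbb{K}:\mathbb{Q}]$ of the formal codegrees equal $p$, where $\mathbb{K}=\mathbb{Q}(\{\dim(X):X\in\mathcal{O}(\mathcal{C})\})\subseteq\mathbb{Q}(\zeta_p)^+$, and each such term contributes $\dim(\varphi)\,t=1\cdot1$. Because $q$ is prime, $[\mathbb{K}:\mathbb{Q}]\in\{1,q\}$. If $[\mathbb{K}:\mathbb{Q}]=q$, the rational part of the displayed identity is at least $q$, while the Siegel bound $t\geq3/2$ on any surviving degree-$q$ orbit makes the remaining part at least $q\cdot(3/2)$; the identity would then force $q+\tfrac32 q\le p=2q+1$, i.e.\ $q\le2$ and $p=5$, contradicting $p\neq5$. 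This settles every case in which the categorical dimensions are irrational.

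The remaining, and I expect hardest, case is $\mathbb{K}=\mathbb{Q}$, i.e.\ all categorical dimensions are rational integers, for then the Galois orbit of $\dim$ is a single point and the counting argument collapses. When every $\dim(X)>0$ the homomorphism $\dim$ is the unique Frobenius--Perron character, so $\mathcal{C}$ is pseudounitary and pointed as above; the genuine obstacle is a non-pseudounitary category with some $\dim(X)<0$ yet a codegree of degree $q$, since here the bare trace identity bounds $t(u_f^{-1})$ only by $2$, and the interval $(3/2,2]$ really does contain absolute traces of totally positive units of $\mathbb{Q}(\zeta_p)^+$ of degree $q\geq3$. To close this case I would pass to the Drinfeld center $\mathcal{Z}(\mathcal{C})$, a modular category of global dimension $p^2$ whose formal codegrees refine those of $\mathcal{C}$ (as used in Proposition \ref{dfdivides}), and exploit the Galois action on its modular data, which organizes the squared categorical dimensions into a single Galois orbit, together with Siegel's $\lambda=3/2$ classification, to exclude the non-rational codegree unless $p=5$.
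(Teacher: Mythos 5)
Your overall strategy---reduce to $\dim(\mathcal{C})=p$ via Theorem \ref{prop6}, use the safe-prime dichotomy $d_f\in\{1,q\}$ from Proposition \ref{dfdivides}, and play the trace identity of Proposition \ref{notlapland} against Siegel's bound---is the paper's argument, and your case $[\mathbb{K}:\mathbb{Q}]=q$ is carried out correctly (your inequality $q+\tfrac{3}{2}q\leq 2q+1$ is a mild, harmless variant of the paper's exact count of $2q$ formal codegrees). The genuine gap is the case you flag yourself: $\mathbb{K}=\mathbb{Q}$ together with a formal codegree of degree $q$. You do not close it: the Drinfeld-center sketch in your final paragraph is a proposal, not a proof, and as you correctly observe, the bare trace identity in that case only yields $t(u_f^{-1})\leq 2$, which Siegel's bound of $3/2$ cannot contradict. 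So as written, a non-pseudounitary category with rational categorical dimensions and an irrational Frobenius--Perron dimension survives your argument.

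The missing ingredient is \cite[Corollary 2.15]{ost15}, which the paper invokes at exactly this point: \emph{every} formal codegree of $\mathcal{C}$ lies in the field $\mathbb{Q}(\dim(X):X\in\mathcal{O}(\mathcal{C}))=\mathbb{K}$ generated by the categorical dimensions. Hence if $\mathbb{K}=\mathbb{Q}$, then all formal codegrees are rational, so $\mathrm{FPdim}(\mathcal{C})=p$ and $\mathcal{C}$ is pointed; the configuration you worry about (integral categorical dimensions but a degree-$q$ codegree) simply cannot occur. Equivalently, the existence of the degree-$q$ codegree $\mathrm{FPdim}(\mathcal{C})$ forces $[\mathbb{K}:\mathbb{Q}]\geq q$, i.e. the Galois orbit of $\dim$ has at least $q$ elements, which is how the paper obtains the $q$ copies of the codegree $p$ with no case split at all. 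With this one citation your two cases collapse into the paper's single argument and the proof is complete.
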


\begin{proof}
Proposition \ref{dfdivides} implies $d_f=1,q$ for all formal codegrees $f$.  This implies that either $\mathrm{FPdim}(\mathcal{C})=p$ and $\mathcal{C}$ is pointed, or $d_{\mathrm{FPdim}(\mathcal{C})}=q$.  
But \cite[Corollary 2.15]{ost15} states that all formal codegrees of $\mathcal{C}$ are contained in the field of categorical dimensions:
\begin{equation}
\mathbb{Q}(\dim(X):X\in\mathcal{O}(\mathcal{C})).
\end{equation}
In particular the categorical dimension homomorphism $\dim:K(\mathcal{C})\to\mathbb{C}$ has at least $q$ nonisomorphic Galois conjugates.  Therefore there are precisely $2q$ formal codegrees of $\mathcal{C}$: $p$ with multiplicity $q$ and the Galois conjugates of $\mathrm{FPdim}(\mathcal{C})$.  But Siegel's trace bound for totally positive algebraic integers \cite[Theorem III]{siegel} implies $\mathrm{Tr}(u_{\mathrm{FPdim}(\mathcal{C})}^{-1})\geq3q/2\geq q+1$ with equality if and only if $q=2$.  Hence Equation (\ref{equation20}) from Proposition \ref{notlapland} can only hold when $p=5$ or $\mathcal{C}$ is pointed.
\end{proof}

\begin{note}
The primes $p\in\mathbb{Z}_{\geq2}$ of the form $p=2q+1$ where $q\in\mathbb{Z}_{\geq2}$ is prime as well are known as \emph{safe primes}.  The safe primes less than $500$ are
\begin{equation}
5, 7, 11, 23, 47, 59, 83, 107, 167, 179, 227, 263, 347, 359, 383, 467,\text{ and }479.
\end{equation}
\end{note}

\begin{example}\label{thurteen}
Theorem \ref{previous} and \cite[Example 5.1.2]{ostrikremarks} show that aside from $\overline{\mathrm{Fib}}$ (up to equivalence), any spherical fusion category of prime global dimension $p=2,3,5,7,11$ is pointed.  The next smallest case is $p=13$.  Proposition \ref{dfdivides} implies that $d_{\mathrm{FPdim}(\mathcal{C})}\in\{1,2,3,6\}$.  The case $d_{\mathrm{FPdim}(\mathcal{C})}=1$ is pointed and the case $d_{\mathrm{FPdim}(\mathcal{C})}=6=(13-1)/2$ is impossible as in Theorem \ref{previous}.  If $d_{\mathrm{FPdim}(\mathcal{C})}=2$, then $\mathrm{FPdim}(\mathcal{C})\in\mathbb{Q}(\sqrt{13})$.  The fundamental unit $\epsilon_{13}:=(1/2)(3+\sqrt{13})$ has norm $-1$, thus any totally positive unit in $\mathbb{Q}(\sqrt{13})$ is $\epsilon_{13}^n$ for some even $n$.  But $\sigma(\epsilon_{13}^n)=\epsilon_{13}^{-n}$ and moreover $13\epsilon_{13}^{-n}<\sqrt{8/5}$ for all even $n\in\mathbb{Z}_{\geq2}$ thus no such number is the Frobenius-Perron dimension of a fusion category by \cite[Theorem 4.2.1]{ostrikremarks}.  Lastly assume $d_{\mathrm{FPdim}(\mathcal{C})}=3$.  Then  using the absolute trace bound of Flammang \cite{flammang}, $\mathrm{Tr}(u_{\mathrm{FPdim}(\mathcal{C})}^{-1})>3\cdot1.78>5$ as all exceptions to this bound do not lie in $\mathbb{Q}(\zeta_{13})^+$.  As the ring homomorphism $\dim$ must then have at least three Galois conjugates to satify \cite[Corollary 2.15]{ost15}, we must have $6\leq\mathrm{Tr}(u_{\mathrm{FPdim}(\mathcal{C})}^{-1})\leq10$ to satisfy Proposition \ref{notlapland}.  It is now a finite check to verify no such unit exists in $\mathbb{Q}(\zeta_{13})^+$ by evaluating $\sqrt{\Delta}$ where $\Delta$ is the discriminant of $x^3-ax^2+bx-1$ where $6\leq a\leq10$ and $b\in\mathbb{Z}_{\geq1}$ such that $\Delta>0$ (as $u_{\mathrm{FPdim}(\mathcal{C})}^{-1}$ is totally real).  One easily demonstrates this set is finite and if $u_{\mathrm{FPdim}(\mathcal{C})}^{-1}\in\mathbb{Q}(\zeta_{13})^+$ exists, then $\sqrt{\Delta}$ must be a power of $13$.  For example in the largest case, for $a=10$, $7\leq b\leq25$ and $\sqrt{\Delta}\in\mathbb{Z}$ if and only if $a=17$ when $\Delta=3^4$. 
\end{example}


\subsection{Results on spherical braided fusion categories}

\begin{lemma}\label{modlemma}
Let $\mathcal{C}$ be a spherical fusion category of prime global dimension $p\in\mathbb{Z}_{\geq2}$.  If $\mathcal{C}$ is braided, then $\mathcal{C}$ is nondegenerately braided, hence modular, or symmetric.
\end{lemma}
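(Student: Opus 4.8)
The plan is to analyze the symmetric center $\mathcal{C}'$, which is a fusion subcategory of $\mathcal{C}$, and show that its global dimension can only be $1$ or $p$. By the definitions recalled above, $\dim(\mathcal{C}')=1$ is equivalent to $\mathcal{C}'=\mathrm{Vec}$, i.e. $\mathcal{C}$ is nondegenerately braided (hence modular, since it is spherical), while $\dim(\mathcal{C}')=p=\dim(\mathcal{C})$ forces $\mathcal{C}'=\mathcal{C}$, i.e. $\mathcal{C}$ is symmetric. So the entire content reduces to the dichotomy $\dim(\mathcal{C}')\in\{1,p\}$.

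First I would record that $\mathcal{C}'$ inherits the spherical structure of $\mathcal{C}$ and is a \emph{symmetric} fusion category. By Deligne's theorem every symmetric fusion category over $\mathbb{C}$ is (super-)Tannakian, equivalent to $\mathrm{Rep}(G,z)$ for a finite (super)group; such categories are pseudounitary with $\dim(\mathcal{C}')=\mathrm{FPdim}(\mathcal{C}')=|G|\in\mathbb{Z}_{\geq1}$, so $\dim(\mathcal{C}')$ is a positive rational integer. Next, since $\mathcal{C}'$ is a fusion subcategory of the spherical category $\mathcal{C}$, the quotient $\dim(\mathcal{C})/\dim(\mathcal{C}')$ is an algebraic integer by divisibility of global dimensions of fusion subcategories. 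As $\dim(\mathcal{C})=p$ and $\dim(\mathcal{C}')\in\mathbb{Z}_{\geq1}$, this ratio is a positive rational algebraic integer, hence a positive integer, and therefore $\dim(\mathcal{C}')$ divides $p$ in $\mathbb{Z}$. Primality of $p$ then yields $\dim(\mathcal{C}')\in\{1,p\}$.

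To finish I would separate the two cases. If $\dim(\mathcal{C}')=1$, then $\mathcal{C}'=\mathrm{Vec}$ (a fusion subcategory of global dimension $1$ can contain only $\mathbbm{1}$, as each squared norm is at least $1$), so $\mathcal{C}$ is nondegenerate and therefore modular. If instead $\dim(\mathcal{C}')=p=\dim(\mathcal{C})$, then since global dimension is the sum $\sum_{X}|X|^2$ with every summand positive, the inclusion $\mathcal{O}(\mathcal{C}')\subseteq\mathcal{O}(\mathcal{C})$ with equal totals forces $\mathcal{O}(\mathcal{C}')=\mathcal{O}(\mathcal{C})$, i.e. $\mathcal{C}'=\mathcal{C}$ and $\mathcal{C}$ is symmetric.

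The main obstacle is not the combinatorics but securing the two structural inputs of the middle paragraph: the integrality $\dim(\mathcal{C}')\in\mathbb{Z}$, which rests on the super-Tannakian classification and requires care in the genuinely super case (where categorical dimensions can be negative but squared norms remain positive), and the divisibility of $\dim(\mathcal{C}')$ into $\dim(\mathcal{C})$. An alternative route avoiding Deligne would instead observe that $\dim(\mathcal{C}')$ is a formal codegree of the spherical category $\mathcal{C}'$, hence a $d$-number dividing $p$, so $N(\dim(\mathcal{C}'))=p^k$ with $0\le k\le d_{\dim(\mathcal{C}')}$; the intermediate range $0<k<d_{\dim(\mathcal{C}')}$ could then be excluded via Lemma \ref{cor1}, which would make $\mathcal{C}'$ Galois conjugate to a pseudounitary (hence integral) symmetric category and thereby contradict the irrationality such a range entails, leaving $k=0$ and the integral case. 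I would present the Deligne-based argument as primary, since it makes the dichotomy $\dim(\mathcal{C}')\in\{1,p\}$ most transparent.
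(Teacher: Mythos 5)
Your strategy---reduce the lemma to the dichotomy $\dim(\mathcal{C}')\in\{1,p\}$ for the symmetric center, with Deligne's theorem supplying integrality of $\dim(\mathcal{C}')$---is close in spirit to the paper's proof, which also rests on Deligne's theorem (the symmetric center, being symmetric, is integral, hence contained in $\mathcal{C}_\mathbb{Q}$). But the step carrying all the weight in your write-up is asserted rather than proved: you claim $\dim(\mathcal{C})/\dim(\mathcal{C}')$ is an algebraic integer ``by divisibility of global dimensions of fusion subcategories.'' The standard subcategory divisibility theorem is for \emph{Frobenius--Perron} dimensions \cite[Proposition 8.15]{ENO}; there is no general theorem of this form for global dimensions of arbitrary fusion subcategories available to cite, and this is precisely why the paper does not argue with $\dim$ directly. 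Instead it routes through $\mathrm{FPdim}$: Lemma \ref{plemma} gives $\mathrm{FPdim}(\mathcal{C})=p\cdot u$ with $u$ a unit, so the rational integer $\mathrm{FPdim}(\mathcal{C}_\mathbb{Q})$, which divides $\mathrm{FPdim}(\mathcal{C})$ by \cite[Proposition 8.15]{ENO}, divides $p$ by the norm criterion of Proposition \ref{newlemma}, forcing $\mathrm{FPdim}(\mathcal{C}_\mathbb{Q})\in\{1,p\}$; since $\mathcal{C}'\subseteq\mathcal{C}_\mathbb{Q}$ by Deligne, the dichotomy follows. Note also that your proposed fallback has the same circularity: $\dim(\mathcal{C}')$ is indeed a formal codegree of the spherical category $\mathcal{C}'$, but \cite[Corollary 2.14]{ost15} only makes it divide $\dim(\mathcal{C}')$, not $\dim(\mathcal{C})=p$; passing from the subcategory to the ambient category is exactly the unproved step.

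The gap is repairable in the braided setting of the lemma without the missing theorem. Since $\mathcal{C}$ is spherical and braided, $X\mapsto(X,c_{-,X})$ embeds $\mathcal{C}$, hence $\mathcal{C}'$, as a fusion subcategory of the modular category $\mathcal{Z}(\mathcal{C})$, whose global dimension is $p^2$; M\"uger's centralizer theorem for modular categories ($\dim(\mathcal{D})\dim(\mathcal{C}_\mathcal{M}(\mathcal{D}))=\dim(\mathcal{M})$) then shows $p^2/\dim(\mathcal{C}')$ is the global dimension of a fusion category, hence an algebraic integer, so the rational integer $\dim(\mathcal{C}')$ divides $p^2$ in $\mathbb{Z}$. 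Positivity of squared norms gives $\dim(\mathcal{C}')\leq\dim(\mathcal{C})=p$, whence $\dim(\mathcal{C}')\in\{1,p\}$ and your case analysis goes through; alternatively one simply follows the paper's $\mathrm{FPdim}$ route above. One smaller correction: your parenthetical ``each squared norm is at least $1$'' is false in general---the nontrivial simple object of $\mathrm{Fib}^\sigma$ has squared norm $(3-\sqrt{5})/2<1$---though this is harmless here, since Deligne already identifies $\mathcal{C}'$ with $\mathrm{Rep}(G,z)$ and $\dim(\mathcal{C}')=|G|=1$ forces $G$ trivial; your positivity argument in the case $\dim(\mathcal{C}')=p$ is correct as stated.
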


\begin{proof}
By Lemma \ref{plemma}, $\mathrm{FPdim}(\mathcal{C})=p\cdot u_{\mathrm{FPdim}(\mathcal{C})}$ for some $u_{\mathrm{FPdim}(\mathcal{C})}\in\mathcal{O}_{\mathbb{Q}(\zeta_p)}^\times$.  Hence $\mathrm{FPdim}(\mathcal{C}_\mathbb{Q})=1,p$ where $\mathcal{C}_\mathbb{Q}$ is the fusion subcategory consisting of objects of rational integer Frobenius-Perron dimension.  In the latter case, $\mathcal{C}_\mathbb{Q}$ is pointed, hence $\mathrm{rank}(\mathcal{C})\geq p$ and thus $\mathcal{C}$ itself is pointed.  Moreover $\mathrm{FPdim}(\mathcal{C})=p$ and $\mathcal{C}$ is equivalent to $\mathrm{Rep}(\mathbb{Z}/p\mathbb{Z})$ as a fusion category \cite[Corollary 8.30]{ENO}.  Braidings on $\mathrm{Rep}(\mathbb{Z}/p\mathbb{Z})$ are either symmetric or modular as $\mathrm{Rep}(\mathbb{Z}/p\mathbb{Z})$ has no proper nontrivial fusion subcategories.  Otherwise $\mathcal{C}_\mathbb{Q}$ is trivial, which contains the symmetric center of $\mathcal{C}$, i.e. \!$\mathcal{C}$ is modular.
\end{proof}

As spherical braided fusion categories of prime global dimension are nondegenerately braided, one can utilize the Galois action on the normalized modular data which will naturally involve traces of real cyclotomic integers.

\begin{lemma}\label{calegarilemma}
Let $p\in\mathbb{Z}_{>7}$ be prime.  If $\alpha\in\mathbb{Q}(\zeta_p)^+$ is a cyclotomic integer, then:
\begin{itemize}
\item[(a)] $\alpha=\pm1$ with $\mathrm{Tr}_{\mathbb{Q}(\zeta_p)^+}(\alpha^2)=(p-1)/2$,
\item[(b)] $\alpha$ is Galois conjugate to $\pm2\cos(2\pi/p)$ with $\mathrm{Tr}_{\mathbb{Q}(\zeta_p)^+}(\alpha^2)=p-2$, or
\item[(c)] $\mathrm{Tr}_{\mathbb{Q}(\zeta_p)^+}(\alpha^2)\geq p$.
\end{itemize}
\end{lemma}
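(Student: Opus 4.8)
The plan is to analyze cyclotomic integers $\alpha\in\mathbb{Q}(\zeta_p)^+$ via the trace form applied to $\alpha^2$, exploiting that this field has degree $(p-1)/2$ over $\mathbb{Q}$ and that the primitive $p$-th roots of unity form a particularly rigid integral basis. First I would write $\alpha^2$ in terms of the power basis $\{2\cos(2\pi k/p)\}$ or, more usefully, relate $\mathrm{Tr}_{\mathbb{Q}(\zeta_p)^+}(\alpha^2)$ to the squared $L^2$-norm of the vector of Galois conjugates of $\alpha$. Since $\mathrm{Tr}_{\mathbb{Q}(\zeta_p)^+}(\alpha^2)=\sum_\tau \tau(\alpha)^2$ and the conjugates are real, this is a sum of squares of real algebraic numbers, hence a positive integer. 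The claim is really a statement about how small this positive integer can be, and the trichotomy $(p-1)/2$, $p-2$, and $\geq p$ suggests that the only cyclotomic integers realizing small absolute trace of their square are the trivial units $\pm1$ and the Galois conjugates of $\pm2\cos(2\pi/p)=\pm(\zeta_p+\zeta_p^{-1})$.

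The key technical step is to invoke the classification coming from Cassels' lemma \cite[Lemma 3]{cassels} together with the improvements of Calegari–Morrison–Snyder \cite{MR2786219} (the reference to \texttt{calegarilemma} in the label is telling): these results classify cyclotomic integers $\alpha$ whose \emph{absolute} trace $M(\alpha):=\mathrm{Tr}(|\alpha|^2)/d_\alpha$ of $|\alpha|^2$ is bounded below $2$. Since $\alpha$ is totally real here, $|\alpha|^2=\alpha^2$, and the absolute trace over $\mathbb{Q}(\zeta_p)^+$ equals $\mathrm{Tr}_{\mathbb{Q}(\zeta_p)^+}(\alpha^2)/((p-1)/2)$. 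In case (a) this ratio is exactly $1$; in case (b) it is $(p-2)/((p-1)/2)=2(p-2)/(p-1)<2$; and case (c) is precisely the statement that otherwise the ratio is at least $p/((p-1)/2)=2p/(p-1)>2$. Thus the dividing line between the exceptional cases and case (c) is exactly the threshold $2$ from the Cassels/Calegari classification: I would show that any $\alpha\in\mathbb{Q}(\zeta_p)^+$ with $\mathrm{Tr}_{\mathbb{Q}(\zeta_p)^+}(\alpha^2)<p$ has absolute squared-trace strictly less than $2$, hence must be one of the finitely many classified exceptional cyclotomic integers, and then check that within $\mathbb{Q}(\zeta_p)^+$ for $p>7$ the only such exceptions are $\pm1$ and the conjugates of $\pm(\zeta_p+\zeta_p^{-1})$.

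The main obstacle is the bookkeeping in the final identification step: the Cassels/Calegari classification lists its exceptional cyclotomic integers across all cyclotomic fields, and I must verify (i) which of these actually lie in the real subfield $\mathbb{Q}(\zeta_p)^+$ for a given prime $p$, and (ii) that the hypothesis $p>7$ is exactly what rules out the sporadic small-field exceptions (such as those tied to $\zeta_5$, $\zeta_7$, or roots of unity of small order that would otherwise sit inside $\mathbb{Q}(\zeta_p)^+$ only for special $p$). I would compute $\mathrm{Tr}_{\mathbb{Q}(\zeta_p)^+}((\zeta_p+\zeta_p^{-1})^2)$ directly to confirm the value $p-2$ in case (b): expanding $(\zeta_p+\zeta_p^{-1})^2=\zeta_p^2+2+\zeta_p^{-2}$ and using that $\mathrm{Tr}_{\mathbb{Q}(\zeta_p)^+}$ of a nontrivial real part of a primitive root is $-1$ gives $-1+2\cdot((p-1)/2)+(-1)=p-2$, which pins down case (b) exactly. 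The remaining work is to confirm no other absolute-trace-$<2$ cyclotomic integer survives the $p>7$ restriction, which is where the explicit Calegari–Morrison–Snyder list must be consulted case by case.
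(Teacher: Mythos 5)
Your proposal follows the same route as the paper's proof: both reduce the lemma to Cassels' bound \cite[Lemma 3]{cassels} (a real cyclotomic integer that is not a root of unity or a sum of two roots of unity has $\mathrm{Tr}(\alpha^2)\geq 2[\mathbb{Q}(\alpha^2):\mathbb{Q}]$), and both then identify the exceptional elements of $\mathbb{Q}(\zeta_p)^+$ as $\pm1$ and the Galois conjugates of $\pm(\zeta_p+\zeta_p^{-1})$, which produce cases (a) and (b). One small slip: your computation of case (b) mixes traces over $\mathbb{Q}(\zeta_p)$ and over $\mathbb{Q}(\zeta_p)^+$; the pair $\zeta_p^2+\zeta_p^{-2}$ contributes $-1$ in total to $\mathrm{Tr}_{\mathbb{Q}(\zeta_p)^+}$, not $-1$ per term, so as written your sum is $-1+(p-1)+(-1)=p-3$, though the claimed value $p-2$ is correct.

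The genuine gap is in your pivotal reduction: ``any $\alpha\in\mathbb{Q}(\zeta_p)^+$ with $\mathrm{Tr}_{\mathbb{Q}(\zeta_p)^+}(\alpha^2)<p$ has absolute squared-trace strictly less than $2$.'' Since $[\mathbb{Q}(\zeta_p)^+:\mathbb{Q}]=(p-1)/2$ and the trace is a rational integer, the hypothesis $\mathrm{Tr}<p$ only means $\mathrm{Tr}\leq p-1$, i.e.\ absolute trace $\leq 2$, not $<2$; the boundary value $\mathrm{Tr}=p-1$ (absolute trace exactly $2$) is not reached by the strictly-below-$2$ classification you invoke, and it is not an empty case. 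Indeed $\alpha=1+\zeta_7+\zeta_7^{-1}\in\mathbb{Q}(\zeta_7)^+$ satisfies $\mathrm{Tr}_{\mathbb{Q}(\zeta_7)^+}(\alpha^2)=6=p-1$, is not a sum of two roots of unity (it exceeds $2$ in absolute value), and violates all three conclusions of the lemma --- this is exactly why the hypothesis $p>7$ appears. So a complete proof must separately rule out $\mathrm{Tr}_{\mathbb{Q}(\zeta_p)^+}(\alpha^2)=p-1$ for primes $p>7$: either by invoking the Calegari--Morrison--Snyder classification \cite{MR2786219} up to the threshold $9/4$ (which covers absolute trace $2$, and whose non-sum exceptions there lie only in $\mathbb{Q}(\zeta_7)$), or by a direct argument, e.g.\ writing $\alpha=\sum_k a_k\zeta_p^k$ with $a_k\in\mathbb{Z}$, using $\mathrm{Tr}_{\mathbb{Q}(\zeta_p)}(\alpha^2)=p\sum_k a_k^2-\left(\sum_k a_k\right)^2$, and deriving a contradiction from $\left(\sum_k a_k\right)^2=p\left(\sum_k a_k^2-2\right)+2$ together with size constraints when $p\geq 11$. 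Be aware that the paper's own write-up blurs this same point --- its final display asserts $2[\mathbb{Q}(\zeta_p)^+:\mathbb{Q}]=p$ when this quantity is actually $p-1$ --- so the boundary case is a real mathematical issue that your proof, like the paper's, needs to address explicitly.
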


\begin{proof}
This is implied by \cite[Lemma 3]{cassels} which states that if $\alpha$ is a (real) cyclotomic integer which is not a root of unity or the sum of two roots of unity, then $\mathrm{Tr}(\alpha^2)\geq2\cdot[\mathbb{Q}(\alpha^2):\mathbb{Q}]$.  Assuming $\alpha\in\mathbb{Q}(\zeta_p)^+$ with $p\neq2$, $\alpha$ being a root of unity or a sum of two roots of unity gives cases (a) and (b) (refer to \cite[Lemma 4.1.3]{MR2786219}, for example).  Otherwise, \cite[Lemma 3]{cassels} implies
\begin{align}
\mathrm{Tr}_{\mathbb{Q}(\zeta_p)^+}(\alpha^2)&=[\mathbb{Q}(\zeta_p)^+:\mathbb{Q}(\alpha^2)]\cdot\mathrm{Tr}(\alpha^2) \\
&\geq[\mathbb{Q}(\zeta_p)^+:\mathbb{Q}(\alpha^2)]\cdot2\cdot[\mathbb{Q}(\alpha^2):\mathbb{Q}] \\
&=2\cdot[\mathbb{Q}(\zeta_p)^+:\mathbb{Q}] \\
&=p.
\end{align}
\end{proof}

Simple objects of categorical dimension of type (b) in Lemma \ref{calegarilemma} rarely occur under the current assumptions.

\begin{lemma}\label{dimlemma}
Let $p\in\mathbb{Z}_{\geq7}$ be prime.  If $\mathcal{C}$ is a spherical braided fusion category of global dimension $p$, then $\dim(X)\neq\pm2\cos(2\pi/p)$ for any $X\in\mathcal{O}(\mathcal{C})$.
\end{lemma}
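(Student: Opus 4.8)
The plan is to reduce to the nondegenerately braided (modular) case and then show that a simple object of dimension $\pm2\cos(2\pi/p)$ forces an impossible integral substructure. By Lemma \ref{modlemma}, $\mathcal{C}$ is either symmetric or modular. If $\mathcal{C}$ is symmetric then Lemma \ref{modlemma} shows it is pointed, so every simple object has $\dim(X)=\pm1$, which is not $\pm2\cos(2\pi/p)$ since the latter has degree $(p-1)/2\geq3$ and is irrational for $p\geq7$. So I would assume $\mathcal{C}$ is modular and, for contradiction, that $\dim(X_0)=\pm2\cos(2\pi/p)$ for some $X_0\in\mathcal{O}(\mathcal{C})$.

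First I would pin down the dimension spectrum. Because $\dim(\mathcal{C})=p\in\mathbb{Q}$, the Galois action on normalized modular data gives $\sigma(\dim(X)^2)=\dim(\hat{\sigma}(X))^2$, so the multiset $\{\dim(X)^2:X\in\mathcal{O}(\mathcal{C})\}\subset\mathbb{Q}(\zeta_p)^+$ is stable under $\mathrm{Gal}(\mathbb{Q}(\zeta_p)^+/\mathbb{Q})$. Hence the full orbit of $\dim(X_0)^2=4\cos^2(2\pi/p)$, namely its $(p-1)/2$ distinct conjugates, is realized by simple objects. Each conjugate occurs exactly once: by Galois-stability the multiplicity is constant on the orbit, and multiplicity $2$ would contribute $2\,\mathrm{Tr}_{\mathbb{Q}(\zeta_p)^+}(4\cos^2(2\pi/p))=2(p-2)>p$ to $\dim(\mathcal{C})=\sum_X\dim(X)^2$ (this global-dimension identity is Proposition \ref{notlapland} specialized to the modular case, where every $\dim(\varphi)=1$ and $u_{f_\varphi}^{-1}=\dim(X)^2$; the value $\mathrm{Tr}_{\mathbb{Q}(\zeta_p)^+}(4\cos^2(2\pi/p))=p-2$ is Lemma \ref{calegarilemma}(b)).

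Next I would apply the trichotomy of Lemma \ref{calegarilemma} to $\alpha=\dim(X)$ for each $X$. Taking $\mathrm{Tr}_{\mathbb{Q}(\zeta_p)^+}$ of $\sum_X\dim(X)^2=p$ yields $\sum_X\mathrm{Tr}_{\mathbb{Q}(\zeta_p)^+}(\dim(X)^2)=p(p-1)/2$. The unit object contributes $(p-1)/2$ and the $(p-1)/2$ objects of the orbit each contribute $p-2$, using $\tfrac{p-1}{2}+\tfrac{p-1}{2}(p-2)=\tfrac{(p-1)^2}{2}$; since every further simple object costs at least $(p-1)/2$ by Lemma \ref{calegarilemma}(a) and only $\tfrac{p-1}{2}$ of the trace budget remains, there is exactly one additional simple object $g$, of type (a), with $\dim(g)^2=1$. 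Thus $\mathcal{C}$ has precisely two simple objects of categorical dimension $\pm1$, namely $\mathbbm{1}$ and $g$, plus the $(p-1)/2$ orbit objects.

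To finish I would argue that $g$ is invertible. Granting this, $\{\mathbbm{1},g\}$ generates a pointed fusion subcategory of Frobenius–Perron dimension $2$, so $\mathrm{FPdim}(\mathcal{C})/2$ is an algebraic integer; but $\mathrm{FPdim}(\mathcal{C})$ is itself a formal codegree, so Lemma \ref{plemma} gives $\mathrm{FPdim}(\mathcal{C})=p\cdot u$ for a unit $u$, and $N(pu/2)=\pm p^{d}/2^{d}\notin\mathbb{Z}$ (with $d=d_{\mathrm{FPdim}(\mathcal{C})}$ and $p$ odd), a contradiction. The main obstacle is exactly this invertibility of $g$: because $\mathcal{C}$ is not pseudounitary here (its global and Frobenius–Perron dimensions differ), $\dim(g)^2=1$ does not by itself force $\mathrm{FPdim}(g)=1$. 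Indeed $g$ is a Galois image $\hat{\tau}(\mathbbm{1})$ of the unit and inherits its trivial twist $t_g=\tau^2(t_{\mathbbm{1}})=1$, yet could a priori satisfy $\mathrm{FPdim}(g)>1$. I expect that ruling this out requires the Galois action on the twist matrix through $\sigma^2(t_X)=t_{\hat{\sigma}(X)}$, together with the resulting constraint on the Gauss sum $\tau^+=\sum_X t_X\dim(X)^2$, rather than categorical dimensions alone.
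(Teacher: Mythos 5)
Your reduction to the modular case, the Galois-orbit analysis of the squared dimensions, and the trace-budget computation are all correct, and they correctly isolate the only configuration consistent with $\dim(\mathcal{C})=p$: the unit, one further simple object $g$ with $\dim(g)^2=1$, and the $(p-1)/2$ conjugates of $X_0$. But the proof then stops: you still need to rule this configuration out, and the route you propose (show $g$ is invertible, then use $2\mid\mathrm{FPdim}(\mathcal{C})$ against Lemma \ref{plemma}) is exactly the step you admit you cannot complete. The concern you raise is real --- $\dim(g)^2=1$ gives no control on $\mathrm{FPdim}(g)$ in a non-pseudounitary category --- so as written the argument has a genuine gap, and the suggested repair via twists and Gauss sums is speculative (note also that the normalized twist of the unit is $t_{\mathbbm{1}}=\gamma^{-1}$, which need not equal $1$, so ``$g$ inherits a trivial twist'' is not automatic either).

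The idea that closes the gap in the paper is representation-theoretic rather than object-theoretic, and it bypasses invertibility entirely. By \cite[Corollary 2.15]{ost15}, all formal codegrees of $\mathcal{C}$ lie in the field generated by the categorical dimensions of its simple objects. In the modular case $p\,(2\cos(2\pi/p))^{-2}$ is a formal codegree, and it generates $\mathbb{Q}(\zeta_p)^+$, so the field of categorical dimensions contains $\mathbb{Q}(\zeta_p)^+$; hence the homomorphism $\dim:K(\mathcal{C})\to\mathbb{C}$ has at least $(p-1)/2$ pairwise nonisomorphic Galois conjugates. Each conjugate $\sigma(\dim)$ is an irreducible representation of $K(\mathcal{C})$ with formal codegree $\sigma(p)=p$, and in a modular category each such representation corresponds to a distinct simple object $Y$ with $p/\dim(Y)^2=p$, i.e.\ $\dim(Y)^2=1$. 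So there are at least $(p-1)/2\geq3$ simple objects of squared categorical dimension $1$, directly contradicting your count of exactly two; equivalently, $\dim(\mathcal{C})\geq(p-1)/2+\mathrm{Tr}(\dim(X_0)^2)=(p-1)/2+p-2>p$ for $p\geq7$. This multiplicity statement, not the invertibility of $g$, is the missing ingredient; once it is in hand, your trace budget is no longer even needed.
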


\begin{proof}
If $\dim(X)=\pm2\cos(2\pi/p)$ for some $X\in\mathcal{O}(\mathcal{C})$, then because $\mathcal{C}$ is modular, the Galois conjugates of $p(2\cos(2\pi/p))^{-2}$ are formal codegrees of $\mathcal{C}$ which are $(p-1)/2$ in number.  But this implies the categorical dimension representation $\dim:K(\mathcal{C})\to\mathbb{C}$ has at least $(p-1)/2$ Galois conjugates as well by \cite[Corollary 2.15]{ost15} and therefore $\mathcal{C}$ has formal codegree $p$ with multiplicity at least $(p-1)/2$.  These formal codegrees $f_Y$ correspond to simple objects $Y$ of squared categorical dimension $1$ via $f_Y=\dim(\mathcal{C})/\dim(Y)^2$, and together with the Galois orbit of $X$, these $p-1$ simple objects are all of $\mathcal{O}(\mathcal{C})$ as $\mathcal{C}$ cannot be pointed.  Moreover
\begin{equation}
p=\dim(\mathcal{C})=(p-1)/2+\mathrm{Tr}(\dim(X)^2)=(p-1)/2+p-2,
\end{equation}
which implies $p=5$.
\end{proof}

\begin{proposition}\label{theeprop}
Let $\mathcal{C}$ be a spherical braided fusion category of prime global dimension $p\in\mathbb{Z}_{\geq2}$.  If $p\neq5$, then $\mathcal{C}$ is pointed.
\end{proposition}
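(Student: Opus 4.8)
The plan is to reduce to the modular case and then run a Cassels-type trace count against the normalization $\dim(\mathcal{C})=p$. First I would dispatch the small primes: by Example~\ref{thurteen} (which combines Theorem~\ref{previous} with the classifications of \cite{ostrikremarks}) every spherical fusion category of prime global dimension $p\in\{2,3,7,11,13\}$ is pointed, so I may assume $p\geq 17$, comfortably inside the ranges where Lemmas~\ref{calegarilemma} and \ref{dimlemma} apply. Next, Lemma~\ref{modlemma} shows $\mathcal{C}$ is either symmetric or modular. A symmetric fusion category of global dimension $p$ has $\dim(\mathcal{C})=|G|=p$ for its underlying group $G$, forcing $G\cong\mathbb{Z}/p\mathbb{Z}$ and hence $\mathcal{C}$ pointed; so I would assume $\mathcal{C}$ is modular.

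In the modular case all irreducible representations of $K(\mathcal{C})$ are one–dimensional, so the formal codegrees are exactly $\dim(\mathcal{C})/\dim(X)^2=p\cdot u_{f_X}$ for $X\in\mathcal{O}(\mathcal{C})$, where by Lemma~\ref{plemma} and the corollary to Proposition~\ref{dfdivides} each $\dim(X)^2=u_{f_X}^{-1}$ is a totally positive unit in $\mathbb{Q}(\zeta_p)^+$. If $\mathcal{C}$ were pseudounitary then $\mathrm{FPdim}(\mathcal{C})=p$ is prime and $\mathcal{C}$ is pointed by \cite[Corollary 8.30]{ENO}; so I would suppose $\mathcal{C}$ is not pointed, hence not pseudounitary, and fix a simple object $X$ with $\dim(X)^2\neq 1$. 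Applying the trichotomy of Lemma~\ref{calegarilemma} to the cyclotomic integer $\dim(X)\in\mathbb{Q}(\zeta_p)^+$ and discarding case (b) by Lemma~\ref{dimlemma}, every such $X$ must fall into case (c), so $\mathrm{Tr}_{\mathbb{Q}(\zeta_p)^+}(\dim(X)^2)\geq p$, whereas each object with $\dim(X)^2=1$ contributes $(p-1)/2$.

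The engine is then the identity $\sum_{X}\dim(X)^2=p$ (this is Proposition~\ref{notlapland} in the modular normalization). Applying $\mathrm{Tr}_{\mathbb{Q}(\zeta_p)^+/\mathbb{Q}}$ gives $\tfrac12 p(p-1)=a\cdot\tfrac12(p-1)+\sum_{\text{case (c)}}\mathrm{Tr}_{\mathbb{Q}(\zeta_p)^+}(\dim(X)^2)$, where $a$ is the multiplicity of the rational codegree $p$. I would bound the second sum below by $p$ per object, and bound $a$ below by the number of Galois conjugates of the categorical–dimension homomorphism: these conjugates are the characters $\chi_{\hat\sigma(\mathbbm 1)}$, which are distinct simple objects of squared dimension $1$, and by \cite[Corollary 2.15]{ost15} their count equals the degree of the field of categorical dimensions, hence is at least the degree $d_f\geq 2$ of any case-(c) codegree. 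Feeding these two bounds into the trace identity is meant to overdetermine the count and eliminate case-(c) objects, leaving $\mathcal{C}$ pseudounitary, hence pointed.

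The hard part is making this last count actually close for primes that are \emph{not} safe. When $p=2q+1$ with $q$ prime, Proposition~\ref{dfdivides} pins every codegree degree to $\{1,q\}$, so a case-(c) codegree has the full degree $q=(p-1)/2$ and a Galois orbit so large that its Cassels trace by itself overruns $\tfrac12 p(p-1)$; this is precisely the mechanism of Theorem~\ref{previous}. For a general prime the codegree degrees can be intermediate proper divisors of $(p-1)/2$, producing shorter orbits, and the crude count only bounds the offending degree by roughly $p/3$ rather than contradicting it. Closing this gap is where I expect the braiding to be used in earnest: the additional rigidity of the modular data—the Galois action $\sigma^2(t_X)=t_{\hat\sigma(X)}$ on the normalized data together with the Gauss-sum constraint $\sum_X\tilde t_X\dim(X)^2=\tau^+$ with $|\tau^+|^2=\dim(\mathcal{C})=p$—should rule out the surviving intermediate degrees. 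This is exactly the delicate finite analysis (discriminants of cubics, Flammang's sharpening of Cassels) that Example~\ref{thurteen} carries out by hand at $p=13$, and it is the step I expect to be the genuine obstacle.
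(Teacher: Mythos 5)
Your reduction (small primes, symmetric versus modular via Lemma \ref{modlemma}, the Cassels trichotomy of Lemmas \ref{calegarilemma} and \ref{dimlemma}) matches the paper's setup, but the proof has a genuine gap, and it is exactly the one you flag yourself: the trace count applied to $\sum_{X}\dim(X)^2=p$ does not close for general primes, and ``the braiding should rule out the surviving intermediate degrees'' is left as a hope rather than an argument. Moreover, your guess about how the gap gets filled is off target: the paper performs no Flammang/discriminant-style finite analysis as in Example \ref{thurteen}, and at this stage it does not reason about degrees of formal codegrees at all. The missing idea is to organize the count by \emph{twists} rather than by Galois orbits of dimensions or codegrees. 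Writing the normalized modular data inside $\mathbb{Q}(\zeta_{3p^n})$ and letting $H$ be the subgroup of squares in the Galois group (identified with $\mathrm{Gal}(\mathbb{Q}(\zeta_{p^n})^+/\mathbb{Q})$), the relation $\sigma^2(t_X)=t_{\hat{\sigma}(X)}$ shows that any simple $X$ with $\tilde{t}_X\neq1$ has a \emph{fixed-point-free} $H$-orbit: $\tilde{t}_X$ is a primitive $p^k$th root of unity with $p^{k-1}(p-1)/2$ conjugates under $H$, and the rank bound $|\mathcal{O}(\mathcal{C})|\leq p$ forces $k=n=1$, so the orbit of $X$ consists of $(p-1)/2$ pairwise nonisomorphic simples. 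Summing squared dimensions over this orbit gives $\dim(\mathcal{C})\geq\mathrm{Tr}_{\mathbb{Q}(\zeta_p)^+}(\dim(X)^2)$, and then Lemmas \ref{calegarilemma} and \ref{dimlemma} force $\dim(X)^2=1$ for \emph{every} nontrivially twisted simple. This is a conclusion your bookkeeping cannot reach, because the orbit here is an orbit of objects keyed to twists and always has full length $(p-1)/2$, regardless of the degree of any formal codegree; the ``intermediate degree'' problem never arises.

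The paper then closes the argument uniformly in $p$, with no finite check. Some simple must have nontrivial twist (otherwise $\mathcal{C}$ is integral, hence pointed), so $\mathcal{C}$ has at least $(p-1)/2$ simples of squared dimension $1$ with nontrivial twist; a second such orbit would already force $\mathcal{C}$ pointed, so all remaining simples have trivial twist and their squared dimensions sum to $(p+1)/2$. Setting $\alpha:=\sum_{\sigma\in H}\tilde{t}_{\hat{\sigma}(X)}$, this is a Gauss period (half of the nontrivial $p$th roots of unity), so $\alpha+\overline{\alpha}=-1$, and the Gauss sums are $\tau^{\pm}=(p+1)/2+\alpha$ and its conjugate. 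The identity $\tau^+\tau^-=\dim(\mathcal{C})=p$ \cite[Proposition 8.15.4]{tcat} then yields $4|\alpha|^2=4p-(p+1)(p-3)$, whose left side is nonnegative while the right side is negative for $p>7$ --- a contradiction, so the non-pointed case cannot occur. You assembled the correct ingredients (the Galois action on normalized twists and the Gauss-sum constraint) but never executed this step, and this step is the entire content of the proposition beyond the safe-prime case already covered by Theorem \ref{previous}.
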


\begin{proof}
We may assume $p>7$ as the cases $p=2,3,7$ have already been shown to be pointed in \cite{ostrikremarks} and Theorem \ref{previous}.  Note that the unnormalized modular data of $\mathcal{C}$ is contained in $\mathbb{Q}(\zeta_{p^n})$ for some $n\in\mathbb{Z}_{\geq1}$ \cite[Theorem 3.9]{paul}.  In particular the (multiplicative) central charge $\tau^+/\sqrt{\dim(\mathcal{C})}$ is a $p^n$th root of unity; let $\gamma$ be any of its cube roots.  The normalized twists $t_X$ for $X\in\mathcal{O}(\mathcal{C})$ are now $\tilde{t}_X/\gamma$ where $\tilde{t}_X$ is a $p^n$th root of unity and $\sigma\in\mathrm{Gal}(\overline{\mathbb{Q}}/\mathbb{Q})$ acts on $t_X$ via $\sigma^2(t_X)=t_{\hat{\sigma}(X)}$ where $\hat{\sigma}:\mathcal{O}(\mathcal{C})\to\mathcal{O}(\mathcal{C})$ is the corresponding Galois permutation \cite[Theorem II (iii)]{dong2015congruence}.  As all normalized modular data lies in $\mathbb{Q}(\zeta_{3p^n})$ we need only study the action of
\begin{align}
G:=\mathrm{Gal}(\mathbb{Q}(\zeta_{3p^n})/\mathbb{Q})&\cong\mathrm{Gal}(\mathbb{Q}(\zeta_{3})/\mathbb{Q})\times\mathrm{Gal}(\mathbb{Q}(\zeta_{p^n})/\mathbb{Q}) \\
&\cong\mathbb{Z}/2\mathbb{Z}\times\mathrm{Gal}(\mathbb{Q}(\zeta_{p^n})/\mathbb{Q}).\label{torsion}
\end{align}
Let $H:=\{\sigma\in G:\sigma=\tau^2\text{ for some }\tau\in G\}$.  Because of the $2$-torsion in (\ref{torsion}),  $H$ can be identified with a subgroup of $\mathrm{Gal}(\mathbb{Q}(\zeta_{p^n})/\mathbb{Q})$ (which is cyclic).  As the subgroup of squares is index 2 when the order of a finite cyclic group is even and the subgroups of each index are unique, then $H\cong1\times\mathrm{Gal}(\mathbb{Q}(\zeta_{p^n})^+/\mathbb{Q})$ which we will identify with the nontrivial factor.

\vspace{3 mm}

If $\tilde{t}_X\neq1$ for some $X\in\mathcal{O}(\mathcal{C})$, then $\tilde{t}_X$ is a primitive $p^k$th root of unity for some $1\leq k\leq n$ and $\tilde{t}_X$ has $p^{k-1}(p-1)/2$ Galois $H$-conjugates.  Thus $k=1$ (or else $|
\mathcal{O}(\mathcal{C})|>p$) for all such $X$ and moreover $n=1$ above. This implies $t_{\hat{\sigma}(X)}\neq t_X$ for all $\sigma\in H$, hence the $H$-orbit ($\hat{\sigma}(X)$ for all $\sigma\in H$) of $X$ has no fixed points.  Moreover
\begin{equation}\label{but}
\dim(\mathcal{C})\geq\sum_{\sigma\in H}\dim(\hat{\sigma}(X))^2=\sum_{\sigma\in H}\sigma(\dim(X)^2)=\mathrm{Tr}_{\mathbb{Q}(\zeta_p)^+}(\dim(X)^2)
\end{equation}
Furthermore, by Lemmas \ref{calegarilemma} and \ref{dimlemma},  $\mathrm{Tr}_{\mathbb{Q}(\zeta_p)^+}(\dim(X)^2)\geq p$ unless $\dim(X)^2=1$ because $p$ was assumed to not be $5$.   Hence $\dim(X)^2=1$ otherwise all simple objects are in the $H$-orbit of $X$, and hence all have nontrivial twist (but the unit must have trivial twist).  An $X\in\mathcal{O}(\mathcal{C})$ with $\tilde{t}_X\neq1$ does exist, otherwise $\mathcal{C}$ is integral, hence pointed.

\vspace{2 mm}

In summary, $\mathcal{C}$ posesses (at least) $(p-1)/2$ nonisomorphic simple objects with nontrivial twists and squared categorical dimension 1.  If any other orbit of nontrivial twists exists, then their squared categorical dimensions are 1 as well, hence $\mathcal{C}$ is pointed (c.f. \cite[Exercise 9.6.2]{tcat}).  So finally we assume that the remainder of simple objects of $\mathcal{C}$ have trivial twists, and the sum of their squared categorical dimensions is necessarily $p-(p-1)/2=(p+1)/2$.  This is sufficient information to compute (in principle) the Gauss sums $\tau^{\pm}$ of $\mathcal{C}$, whose product is $\tau^+\cdot\tau^-=\dim(\mathcal{C})=p$ \cite[Proposition 8.15.4]{tcat}.  If we define $\alpha:=\sum_{\sigma\in H}\tilde{t}_{\hat{\sigma}(X)}$ and $\tau$ to be a (cyclic) generator of $\mathrm{Gal}(\mathbb{Q}(\zeta_p)/\mathbb{Q})$.  Then evidently $\alpha$ is either $\sum_{\text{odd }a}\tau^a(\zeta_p)$ or $\sum_{\text{even }a}\tau^a(\zeta_p)$ for $0<a<p$ as $\tilde{t}_X$ differs from $\tilde{t}_{\hat{\sigma}(X)}$ by an even power of $\tau$.  These are complex conjugates of one another, hence $\alpha+\overline{\alpha}=-1$ in either case, the sum of all nontrivial $p$th roots of unity where here $\overline{\alpha}$ is the complex conjugate of $\alpha$.  Moreover
\begin{equation}
4p=4\tau^+\tau^-=\left(p+1+2\alpha\right)\left(p+1+2\overline{\alpha}\right)=(p+1)^2-4(p+1)+4|\alpha|^2.
\end{equation}
Thus $4p-(p+1)(p-3)=4|\alpha|^2$. But the left-hand side is negative for $p>7$ and the right-hand side is always positive, hence no such fusion category exists.
\end{proof}

\begin{theorem}\label{biggun}
Let $p\in\mathbb{Z}_{\geq2}$ be prime.  If $\mathcal{C}$ is a spherical braided fusion category with $N(\dim(\mathcal{C}))=p$, then $\mathcal{C}$ is pointed, or equivalent to $\mathrm{Fib}$, $\mathrm{Fib}^\sigma$, or $\overline{\mathrm{Fib}}$ as a spherical fusion category.
\end{theorem}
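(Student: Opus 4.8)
The plan is to combine the two preceding results to cover all cases of the prime norm $p = N(\dim(\mathcal{C}))$. By Theorem \ref{prop6}, since $\dim(\mathcal{C})$ is a formal codegree of the spherical fusion category $\mathcal{C}$ with square-free (indeed prime) norm $N(\dim(\mathcal{C})) = p$, either $\dim(\mathcal{C}) \in \mathbb{Z}$ (so $\dim(\mathcal{C}) = p$, since its norm is $p$) or else $\dim(\mathcal{C}) = (1/2)(5 \pm \sqrt{5})$ and $\mathcal{C}$ is equivalent to $\mathrm{Fib}$ or $\mathrm{Fib}^\sigma$. This dichotomy reduces the problem: the non-integer case already lands us among the stated exceptions $\mathrm{Fib}, \mathrm{Fib}^\sigma$, so I only need to analyze the case $\dim(\mathcal{C}) = p$ exactly, where $p$ is a rational prime.

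First I would dispose of the exceptional categories. If $d_{\dim(\mathcal{C})} = 2$, Theorem \ref{prop6} forces $\mathcal{C} \simeq \mathrm{Fib}$ or $\mathrm{Fib}^\sigma$, both of which appear in the conclusion. The remaining possibility from that theorem is $d_{\dim(\mathcal{C})} = 1$, i.e. $\dim(\mathcal{C}) = p \in \mathbb{Z}$, which is exactly the hypothesis of Proposition \ref{theeprop}. I would then invoke Proposition \ref{theeprop} directly: a spherical braided fusion category of prime global dimension $p \neq 5$ is pointed. This handles every prime $p$ except $p = 5$ at one stroke.

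The only remaining case is therefore $\dim(\mathcal{C}) = p = 5$ with $\mathcal{C}$ spherical braided. Here I expect the main subtlety, and the key step is to identify the non-pointed categories of global dimension exactly $5$. The category $\overline{\mathrm{Fib}} = \mathrm{Fib} \boxtimes \mathrm{Fib}^\sigma$ has $\dim(\overline{\mathrm{Fib}}) = 5$, as computed in Example \ref{exampletwo}, and carries a braiding (being a Deligne product of braided categories); it is precisely the exception anticipated in the theorem statement. The classification of spherical fusion categories of global dimension $(1/2)(5 \pm \sqrt{5})$ and of dimension $5$ in \cite[Example 5.1.2]{ostrikremarks}, together with the requirement of a braiding, should yield that a non-pointed spherical braided fusion category of global dimension $5$ is equivalent to $\overline{\mathrm{Fib}}$, while pointed categories of dimension $5$ (e.g. $\mathrm{Vec}_{\mathbb{Z}/5\mathbb{Z}}^\omega$ with its braidings) are already covered by the ``pointed'' alternative.

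The hard part will be confirming that the $p = 5$ case contributes no non-pointed braided category beyond $\overline{\mathrm{Fib}}$; this relies on the cited classification of small global dimension rather than on the trace machinery, so the remaining work is a matter of matching the braided structures against the list in \cite{ostrikremarks} rather than a fresh number-theoretic estimate. Once that identification is in hand, assembling the three sources—Theorem \ref{prop6} for the number-field dichotomy, Proposition \ref{theeprop} for all $p \neq 5$, and the explicit $p = 5$ classification—completes the proof, with the full exceptional set being exactly $\mathrm{Fib}$, $\mathrm{Fib}^\sigma$, and $\overline{\mathrm{Fib}}$.
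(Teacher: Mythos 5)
Your proposal is correct and follows essentially the same route as the paper: the paper's proof is precisely the combination of Theorem \ref{prop6} (handling the non-integer, i.e. $\mathrm{Fib}$/$\mathrm{Fib}^\sigma$, case), Proposition \ref{theeprop} (handling integer dimension $p\neq 5$), and the classification in \cite[Example 5.1.2(v)]{ostrikremarks} (handling $p=5$, which yields $\overline{\mathrm{Fib}}$). Your write-up simply makes explicit how these three ingredients assemble, which is exactly what the paper's one-line proof asserts.
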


\begin{proof}
This follows from Proposition \ref{theeprop}, \cite[Example 5.1.2(v)]{ostrikremarks}, and Proposition \ref{prop6}. 
\end{proof}


\section{Discussion}

Formal codegrees are necessary to the understanding of fusion categories, so here we discuss possible ways they may be inspected further.

\par A classification of modular tensor categories $\mathcal{C}$ such that the Galois action on $\mathcal{O}(\mathcal{C})$ is transitive was recently announced by Ng, Wang, and Zhang and consists of the categories $\boxtimes_{j=1}^n\mathcal{T}_{\kappa_j}^{\sigma_j}$ from Example \ref{sl2}.  We conjecture a result which extends to fusion categories which are not necessarily modular.

\begin{conjecture}
Let $\mathcal{C}$ be a fusion category.  If $\mathcal{C}$ has a formal codegree $f$ such that for all $m\in\mathbb{Z}_{\geq2}$, $m$ does not divide $f$, then $\mathcal{C}$ is equivalent to a fusion category of the form $\boxtimes_{j=1}^n\mathcal{T}_{\kappa_j}^{\sigma_j}$.
\end{conjecture}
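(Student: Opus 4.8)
The plan is to reduce the statement to the modular classification recently announced by Ng, Wang, and Zhang, for which the conjecture is already a theorem, and then to argue that the divisibility hypothesis forces $\mathcal{C}$ into the modular setting. First I would invoke Theorem \ref{newprop}: the hypothesis already guarantees that $K(\mathcal{C})$ is commutative, that $|\mathcal{O}(\mathcal{C})|=d_f$, and that every formal codegree of $\mathcal{C}$ is a Galois conjugate of $f$, so the codegrees form a single Galois orbit indexed by the (one-dimensional) irreducible representations of $K(\mathcal{C})$. For a modular tensor category the formal codegrees are exactly the values $\dim(\mathcal{C})/\dim(X)^2$ for $X\in\mathcal{O}(\mathcal{C})$, and from the Galois action on squared categorical dimensions (a consequence of Equation (\ref{s})) one checks $\sigma(\dim(\mathcal{C})/\dim(X)^2)=\dim(\mathcal{C})/\dim(\hat{\sigma}(X))^2$, so that Galois conjugation of codegrees is realized by the permutation $\hat{\sigma}$ of $\mathcal{O}(\mathcal{C})$. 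Consequently, for modular categories the condition that all formal codegrees lie in one Galois orbit is \emph{equivalent} to transitivity of the Galois action on $\mathcal{O}(\mathcal{C})$, and the products $\boxtimes_{j=1}^n\mathcal{T}_{\kappa_j}^{\sigma_j}$ — themselves modular, being Deligne products of Galois conjugates of the modular categories $\mathcal{C}(\mathfrak{sl}_2,\kappa-2)_\mathrm{ad}$ — are exactly the output of the Ng--Wang--Zhang theorem. Thus the modular case is settled, and the entire content lies in showing that a fusion category with a formal codegree divisible by no rational integer must be equivalent to a modular one of this kind.

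The principal obstacle is that the two standard devices for producing a modular (or even spherical) category from $\mathcal{C}$ both destroy the divisibility hypothesis. Passing to the Drinfeld center $\mathcal{Z}(\mathcal{C})$, which is always modular, is tempting, but the formal codegrees of $\mathcal{Z}(\mathcal{C})$ are essentially the pairwise products of those of $\mathcal{C}$ and therefore acquire rational values: already for $\mathcal{C}=\mathrm{Fib}=\mathcal{T}_5$ the center $\mathcal{Z}(\mathrm{Fib})\simeq\mathrm{Fib}\boxtimes\mathrm{Fib}^\mathrm{rev}$ has the rational formal codegree $N(\dim(\mathrm{Fib}))=5$, so its codegrees are not a single Galois orbit and its Galois action on simple objects is not transitive; hence $\mathcal{Z}(\mathcal{C})$ cannot be fed directly into the modular classification. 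Likewise, replacing $\mathcal{C}$ by its sphericalization $\tilde{\mathcal{C}}$, as in the proof of Theorem \ref{normfiniteness}, multiplies the Frobenius--Perron dimension by $2$ and so introduces a factor of $2$ into a formal codegree. Neither construction transports the hypothesis, so the reduction to the modular case must be carried out by a more intrinsic argument.

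An intrinsic route is to classify first the fusion rings and then their categorifications. By Theorem \ref{newprop} the Grothendieck ring is a commutative based ring of rank $d_f$ whose characters form a single Galois orbit of common norm; using the number-theoretic methods of Sections \ref{divisors}--\ref{globp} together with the structural constraints $\mathcal{C}_\mathbb{Q}=\mathrm{Vec}$ and $\mathcal{C}=\mathcal{C}_\mathrm{ad}$ from Corollary \ref{corhalf}, I would attempt to prove that such a based ring is forced to be a tensor product of the Grothendieck rings of the $\mathcal{T}_\kappa$ for distinct primes $\kappa$. One would then need a categorification-rigidity statement: that each of these fusion rings, and each product of them, admits no categorifications beyond the expected Galois conjugates of Deligne products of the $\mathcal{T}_\kappa$. \textbf{This last step is the genuinely hard part.} Rigidity of this strength is presently known only in low rank, and there is no general mechanism — short of assuming modularity — that forces a commutative fusion category with a single transitive codegree orbit to be braided, let alone nondegenerate. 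Ruling out exotic (in particular non-spherical) categorifications, and upgrading the fusion-ring classification to an equivalence of categories, is where essentially new input will be required.
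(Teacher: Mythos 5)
The statement you are trying to prove is presented in the paper as a \emph{conjecture}: the paper offers no proof of it, and the only ingredient it cites in this direction, the Ng--Wang--Zhang classification of modular tensor categories with transitive Galois action on $\mathcal{O}(\mathcal{C})$, is described there as merely announced. So the comparison here is not between two proofs but between your attempt and an open problem, and your attempt does not close it. Your treatment of the modular case is essentially right: by Theorem \ref{newprop} the codegrees form a single Galois orbit with each value occurring exactly once (this multiplicity-one point matters --- distinct simple objects can share a codegree value, as in Example \ref{exampletoooo} --- and it is what makes ``one Galois orbit of codegrees'' equivalent to transitivity of $\hat{\sigma}$ on $\mathcal{O}(\mathcal{C})$ via $f_X=\dim(\mathcal{C})/\dim(X)^2$), after which the announced classification applies. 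Your negative observations are also correct: $\mathcal{Z}(\mathcal{C})$ acquires the rational formal codegree $N(\dim(\mathcal{C}))$ and the sphericalization acquires a factor of $2$, so neither construction transports the hypothesis.

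The genuine gap is the entire non-modular reduction, and you name it yourself. Your ``intrinsic route'' --- classify the commutative based rings of rank $d_f$ whose characters form one Galois orbit subject to the no-integer-divisor condition, then prove categorification rigidity for them --- is a restatement of the conjecture, not an argument: nothing in Sections \ref{divisors}--\ref{globp} of the paper, and nothing in your proposal, shows that such a fusion ring must be a tensor product of the Grothendieck rings of the $\mathcal{T}_\kappa$, and no mechanism is offered that forces a braiding (let alone nondegeneracy) or rules out exotic, possibly non-spherical, categorifications. Until both of those steps are supplied, what you have is a correct reduction of the modular case to an announced (unpublished) theorem, together with a research plan for everything else; it is not a proof of the statement, and it should not be presented as one.
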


In Proposition \ref{bigone}, we were able to show that for each formal codegree $f$ of a spherical fusion category, there exists a prime dividing the norm of $f$ with a large exponent relative to the degree of the extension $\mathbb{Q}(f)/\mathbb{Q}$.  But it is likely that more is true from an inspection of known examples.

\begin{question}
Does there exist a spherical fusion category $\mathcal{C}$ with formal codegree $f$ whose norm has prime factorization $N(f)=\prod_{j\in J}p_j^{k_j}$, and $k_j<\frac{1}{2}d_f$ for some $j\in J$?
\end{question}

\par One could also extend the results of Theorem \ref{prop6} by studying formal codegrees $f$ of spherical fusion categories such that $N(f)=\prod_{j\in J}p_j^{k_j}$ with $0\leq k_j\leq2$.  Such a formal codegree has $[\mathbb{Q}(f):\mathbb{Q}]\leq4$ so there is a good chance these may be analyzed by elementary methods such as those found in Example \ref{thurteen}.

\par Furthermore, norm finiteness for the global dimension of fusion categories (or more trivially Frobenius-Perron dimension) may be the shadow of a more general phenomenon.  In particular, these results heavily relied on the fact that there exist finitely many fusion categories (up to equivalence) with a fixed global/Frobenius-Perron dimension.  Compare this to the near-group fusion categories $\mathcal{N}(G,k)$ (Example \ref{neargroup}).  It is expected that if $G$ is a finite group, the near-group fusion categories $\mathcal{N}(G,k)$ exist for only finitely-many $k\in\mathbb{Z}_{\geq1}$.  One peculiarity of existence for infinitely-many $k\in\mathbb{Z}_{\geq1}$ is that this would be an infinite collection of inequivalent fusion categories sharing the same formal codegree: $|G|$.  This motivates the following question.

\begin{question}
Let $f\in\mathbb{C}$.  Does there exist an infinite family of nonisomorphic categorifiable fusion rings each having $f$ as a formal codegree?
\end{question}

\par The proof of Theorem \ref{biggun} was made possible by the theoretical framework of modular tensor categories.  But Theorem \ref{previous} is significant evidence that the assumption of a braiding may be superfluous.  Regardless, the assumption of sphericality may be more difficult to remove.  If $\dim(\mathcal{C})=p$ on the nose, then one can classify fusion categories of global dimension $2p$, which includes the sphericalization $\tilde{\mathcal{C}}$, and work backwards.  But when considering the norm of global dimension the problem becomes significantly more difficult.  In particular the degree $d_{\dim(\mathcal{C})}=[\mathbb{Q}(\dim(\mathcal{C})):\mathbb{Q}]$ is bounded above by $k:=\lfloor\log(p)/\log(4/3)\rfloor$ (refer to the proof of Theorem \ref{normfiniteness}) and $N(\dim(\tilde{\mathcal{C}}))=2^{d_{\dim(\mathcal{C})}}N(\dim(\mathcal{C}))$.  Classifying spherical fusion categories whose global dimension has large prime factors seems unlikely at this time, so another method may be needed.  Yet, we still present the following conjecture as a goal.

\begin{conjecture}
Let $p\in\mathbb{Z}_{\geq2}$ be prime.  If $\mathcal{C}$ is a fusion category whose global dimension has norm $p$, then $\mathcal{C}$ is pointed, or equivalent to $\mathrm{Fib}$, $\mathrm{Fib}^\sigma$, or $\overline{\mathrm{Fib}}$.
\end{conjecture}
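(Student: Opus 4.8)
The plan is to reduce the statement to two results already established in the paper by exploiting that a prime norm is in particular square-free. Writing $g:=\dim(\mathcal{C})$, the hypothesis $N(g)=p$ means that $g$ is a formal codegree of a spherical fusion category whose norm is square-free, so Theorem \ref{prop6} applies verbatim to $g$ and cleanly splits the argument according to whether $g$ is a rational integer.

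First I would dispose of the irrational case. If $g\notin\mathbb{Z}$, then Theorem \ref{prop6} forces $g=(1/2)(5\pm\sqrt{5})$, whose norm is $5$; hence $p=5$, and the same theorem identifies $\mathcal{C}$ with $\mathrm{Fib}$ or $\mathrm{Fib}^\sigma$ as a spherical fusion category. No use of the braiding is needed here, and these are exactly two of the listed exceptions.

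Next I would treat the case $g\in\mathbb{Z}$. Since a rational integer coincides with its own norm, $N(g)=p$ forces $g=p$, so $\mathcal{C}$ has \emph{prime} global dimension. For $p\neq5$ this is precisely the hypothesis of Proposition \ref{theeprop}, which concludes that $\mathcal{C}$ is pointed. The remaining subcase $g=p=5$ is the one requiring external input: here $\mathcal{C}$ is a spherical braided fusion category of global dimension exactly $5$, and I would invoke the classification in \cite[Example 5.1.2(v)]{ostrikremarks}, which lists all spherical fusion categories of global dimension $5$ as either pointed or $\overline{\mathrm{Fib}}=\mathrm{Fib}\boxtimes\mathrm{Fib}^\sigma$ (the latter indeed has $\dim=5$ by Example \ref{exampletwo}). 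Assembling the three cases yields the four possibilities in the statement.

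The genuinely hard content is entirely front-loaded into Proposition \ref{theeprop} and Theorem \ref{prop6}, so the only real care needed in the assembly is the handling of $p=5$: this is the unique norm at which both the irrational branch of Theorem \ref{prop6} and the non-pointed modular category $\overline{\mathrm{Fib}}$ of integer global dimension can occur. I would expect this bookkeeping — confirming that nothing beyond $\mathrm{Fib}$, $\mathrm{Fib}^\sigma$, and $\overline{\mathrm{Fib}}$ slips through at $p=5$, and that the cited classification genuinely captures $\overline{\mathrm{Fib}}$ — to be the only delicate point, with everything else following mechanically from the square-free reduction.
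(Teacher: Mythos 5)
The statement you are proving is the paper's closing \emph{conjecture}, not Theorem \ref{biggun}: it concerns an arbitrary fusion category, with no spherical structure and no braiding assumed. Your argument silently reinstates both hypotheses, and that is exactly where it fails. In your first step you declare that $g=\dim(\mathcal{C})$ ``is a formal codegree of a spherical fusion category'' --- but without a spherical structure, $\dim$ is not known to be a ring homomorphism on $K(\mathcal{C})$, so $g$ is not known to be a formal codegree of $\mathcal{C}$ at all, and Theorem \ref{prop6} (whose hypothesis is a \emph{spherical} fusion category with a formal codegree of square-free norm) cannot be applied to it. Likewise, in the integral case $g=p$ you invoke Proposition \ref{theeprop}, which is a statement about spherical \emph{braided} fusion categories; its proof runs through modularity, the Galois action on normalized modular data, twists, and Gauss sums, none of which is available here. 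The only braiding-free result in the paper, Theorem \ref{previous}, still assumes sphericality and only covers safe primes.

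This is not a fixable piece of bookkeeping: the paper presents this statement as a conjecture precisely because no known method removes these hypotheses. The discussion section even explains the obstruction to the natural repair you might attempt: passing to the sphericalization $\tilde{\mathcal{C}}$ does produce a spherical category, but with $N(\dim(\tilde{\mathcal{C}}))=2^{d_g}\cdot p$, which is neither prime nor square-free, so neither Theorem \ref{prop6} nor Theorem \ref{biggun} applies to $\tilde{\mathcal{C}}$, and one would still have to descend any classification of $\tilde{\mathcal{C}}$ back to $\mathcal{C}$. What you have written is, modulo the smuggled-in hypotheses, essentially the paper's own proof of Theorem \ref{biggun} --- Theorem \ref{prop6} to split off the irrational case, Proposition \ref{theeprop} for $g=p\neq 5$, and \cite[Example 5.1.2(v)]{ostrikremarks} for $p=5$ --- but that proves the theorem, not the conjecture.
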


\begin{ack}
We would like to thank Terry Gannon and Victor Ostrik for productive discussions during the preparation of this manuscript, and we would like to thank Colleen Delaney and Julia Plavnik for reading an early draft.
\end{ack}

\bibliographystyle{plain}
\bibliography{bib}

\end{document}